\documentclass[11pt]{article}

%
% Formatting
%
\usepackage{geometry}                % See geometry.pdf to learn the layout options. There are lots.
\geometry{a4paper}                   % ... or a4paper or a5paper or ...                % Activate for for rotated page geometry
\usepackage[parfill]{parskip}    % Activate to begin paragraphs with an empty line rather than an indent

\usepackage[utf8]{inputenc}
\usepackage[T1]{fontenc}
\usepackage{pdfpages}
\usepackage[english]{babel}
\usepackage{tikz}
\usepackage{multirow}
\usepackage{todo}
%
% Packages 
%

\usepackage{amsmath,amssymb,amsfonts,amsthm}
\usepackage{xfrac} 
\usepackage{cite}
\usepackage{fullpage}

\usepackage{hyperref}
\usepackage{subfigure} 
\usepackage{booktabs}
\usepackage{bbm}
\usepackage{bm}
\usepackage[ruled,vlined]{algorithm2e}

\newcommand{\refcite}{\cite}

\newtheorem{remark}{Remark}
\newtheorem{definition}{Definition}
\newtheorem{example}{Example}
\newtheorem{proposition}{Proposition}
\newtheorem{lemma}{Lemma}
\newtheorem{theorem}{Theorem}

\usepackage{ulem}

\begin{document}

\title{A mathematical framework for dynamical social interactions with dissimulation%\\
}

\author{
Y. Saporito\textsuperscript{1,3},
M. O. Souza\textsuperscript{2,4,*}
 and 
Y. Thamsten\textsuperscript{2,5}
\\ \\
\textsuperscript{1}Escola de Matem\'atica Aplicada, Funda\c{c}\~ao Getulio Vargas, \\
Rio de Janeiro, RJ, 22250-900, Brasil
\\
\textsuperscript{2}Instituto de Matem\'atica e Estat\'istica, Universidade Federal Fluminense,\\
Niter\'{o}i, RJ, 24210-200, Brasil
\\ \\ 
Emails:\textsuperscript{3}yuri.saporito@fgv.br, \textsuperscript{4}maxsouza@id.uff.br and \textsuperscript{5}ythamsten@id.uff.br.
\\ \\
\textsuperscript{*}
}

\maketitle

\begin{abstract}
Modeling social interactions is a challenging task that requires flexible frameworks. For instance, dissimulation and externalities are relevant features influencing such systems --- elements that are often neglected in popular models. This paper is devoted to investigating general mathematical frameworks for understanding social situations where agents dissimulate, and may be sensitive to exogenous objective information. Our model comprises a population where the participants can be honest, persuasive, or conforming. Firstly, we consider a non-cooperative setting, where we establish existence, uniqueness and some properties of the Nash equilibria of the game. Secondly, we analyze a cooperative setting, identifying optimal strategies within the Pareto front. In both cases, we develop numerical algorithms allowing us to computationally assess the behavior of our models under various settings.

\paragraph*{Keywords:} Opinion dynamics; Dissimulation; Exogenous influence; Non-cooperative games; Cooperative games

\paragraph*{AMS Subject Classification:} 91D15, 49N70, 34H05
\end{abstract}

\section{Introduction}

In this paper, we investigate the following problem: what are the   dynamics that a  social system can attain as a result of interactions among the agents comprising it? Here, the subjects of our investigations are judgments --- opinions, suspicions, and doubts --- on various matters, upon which we wish to predicate dynamical and equilibria considerations. The main distinctive aspects of this work are the presence of dissimulation, and the influence of exogenous objective information on the behavior of the system's agents. We will work under the hypothesis of rationality of the agents, and the material causes for their reasoning are: (i) apprehension from their social interactions; (ii) cognitive pressures; (iii) effects  individuals observe as consequence of their (aggregate) actions on the environment they are in.

Our modeling viewpoint is similar to that of Sakoda (see   \refcite{hegselmann2017thomas}, p.p. 13-15)\footnote{More precisely, the quotation we refer to is:
\begin{quote}
    ``The checkerboard model in its present form is more of a basic conceptual framework than a model of any given social situation. It has potentiality for further elaboration to fit particular situations. As it now stands, it can be used as a visual representation of the social interaction process, relating attitudes, social interaction and social structure. It should be particularly useful in introductory courses, not only illustrating the relationship among these concepts, but also in discussing the function of models. A model is not necessarily used to predict behavior in a situation. Model building is useful in clarifying the definition of concepts and the relationship among them. Left in verbal form, concepts can be elusive in meaning, whereas computerization require precision in definition of terms. Models can be used to gain insight into basic principles of behavior rather than in finding precise predictions of results for a given social situation, and it is this function which the checkerboard model in its present form provides (\ldots). The checkerboard model provides students of social structure with a possible explanation of its dynamics.''
\end{quote}} relative to his checkerboard model. We will investigate general settings --- competitive and cooperative --- envisaging to shed some light on the problem of dynamical social interactions under dissimulation, as well as the influence of exogenous objective information upon such a system. Therefore, for concreteness, we fix a specific manner in which these aspects are incorporated. However, we advocate that our approach is flexible, allowing for modifications to capture idiosyncrasies of particular systems. Our results do not  intend themselves to be predictive; rather, we  expect they can be useful as possible starting points for explanations of some real-world social situations.

Furthermore, when postulating the specific way we would expect people to dissimulate, we take into account that they can have distinct tempers. Our classification is that individuals are either persuasive, truthful or conforming within the population. Truthful individuals always aim to express judgments which are closer to what they truly think on the matter in question. However, there are many reasons that imply dissimulate behavior. For instance, it is doubtless to say that influencing on others' judgments is a problem of major interest. Political decisions are fundamentally dependent on solving it, and the goal of any company's marketing sector is to convince people that their product is worth buying. We will refer to the type of agents trying to influence others as the persuasive ones.

There are some efforts focused on persuasive behavior in the literature. In the work   \refcite{caillaud2007consensus}, there is a study of optimal strategies for a sponsor that must convince a qualified majority to have her proposal accept. See   \refcite{che2009opinions} for an investigation of a similar problem, now involving an advisor and a decision maker. In the paper   \refcite{rusinowska2019opinion}, authors consider a framework with three persuaders. Two of them are in opposite extremes, the third of which is in the center. Each persuader targets some agent to try to exert his influence upon him, thus influencing the whole network towards his personal judgment.

Alternatively, as a result of social pressures, or due to being more passive or indecisive when making decisions, some individuals express judgments that are distinct of their actual ones simply to conform with their group. There are plenty of empirical evidence that, in many circumstances, people do behave in this way. In effect, in the popular experiment carried out in   \refcite{asch1955opinions}, people misjudged the length of vertical lines supposedly pressured by collaborators figuring as other participants. When asked, some of them confessed that their mistake was due to the discomfort of not conforming. 

We can understand truth as the adequation of the things and the mind.\footnote{According to Western metaphysical tradition, ``\textit{Veritas est adaequatio rei et intellectus}'', see St. Thomas Aquinas' \textit{De Veritate}, Q.1, A.1-4.} From this viewpoint, although social interactions effectively cause individuals to change their judgments and choices, it is relevant to assume in some way that the members of the population consider exogenous objective information, acquired via their interactions with perceived reality. In effect, facing proper indications, even an truthful person can express judgments that differ from their real ones, say for prudence. As people exchange their views, they will act upon their environment, changing it --- we propose here to assess how this modification feeds back into individual actions. In this direction, pertinent questions arise --- possibly of particular relevance nowadays --- such as whether rational social interactions can lead to consensus that we would regard as incorrect from an objective viewpoint.\footnote{E.g., when a vaccine for a given disease is proven effective, can we observe an anti-vaccine consensus?} 

There are some psychological reasons that can affect the reaction of a person to objective information. For instance, there is \textit{confirmation bias}, which refers to a tendency to favor (respectively, avoid) information that somehow agrees (respectively, go against) prior beliefs, values etc., see   \refcite{nickerson1998confirmation}. In this context, the discredit of the source of information is a related issue. Also, when expressing a judgment which deviates from her true one, the agent incurs in a psychological stress, akin to the process of \textit{cognitive dissonance}, see   \refcite{festinger1957theory}. Some that replied wrongly to the experiment of   \refcite{asch1955opinions} said that they were genuinely convinced of their wrong answer. This is a common effect of cognitive dissonance, as individuals strive for consistency.

With sociological roots in   \refcite{french1956formal,harary1959status}, DeGroot pioneered naïve learning models of opinion formation in the seminal work   \refcite{degroot1974reaching}. Taking place in a discrete-time setting, it consists of stipulating that a given agent's judgment at a period is updated by a weighted average of the ones of the previous period. In the economics literature, the authors of   \refcite{demarzo2003persuasion} employed a naïve learning model to investigate the effect of the failure of agents to account for repetitions, what they called persuasion bias. In   \refcite{golub2010naive}, they study the phenomenon of wisdom of the crowds in this model. Among further efforts on naïve learning, we mention the investigation of its relations with cooperation, see   \refcite{kirchkamp2007naive}, the analysis of the effect of Bayesian agents amidst a population of bounded rational individuals, see   \refcite{mueller2014does}, and the question of manipulation, see   \refcite{banerjee2019naive}.

A key development of the DeGroot model is the celebrated Bounded Confidence (BC) model of Hegselmann and Krause, see   \refcite{hegselmann2002opinion}, and also   \refcite{jabin2014clustering} for noteworthy mathematical advancements in this setting. The continuous-time model of naïve learning comprises a straightforward extension, see   \refcite{canuto2008eulerian,blondel2010continuous}. There are many advances based on the BC modeling setups. In   \refcite{hegselmann2006truth,hegselmann2009deliberative}, they regard individuals to be sensible to external information. When considering the action of a leader upon the population, some works taking following a control-theoretical perspective are   \refcite{borzi2015modeling,wongkaew2015control,dietrich2017control}. The recent work   \refcite{han2019opinion} presents results in a modified BC model with stubbornness as a type of persistence. The paper   \refcite{bauso2016opinion} regards a Mean-Field Game (MFG) model account for external disturbances and random noise, in such a way that, in a certain sense, the resulting strategies are robust with respect to uncertainty. We also refer to the paper   \refcite{degond2017continuum} for an MFG model studying long-time dynamics of an opinion formation framework. In these references, authors assume that the expressed judgments coincide with the real ones. A recent advance, in this context, concerns the BC model, namely, the effect of mis- and disinformation on it, see   \refcite{douven2020mis}.

The closest works to the present one in the literature are \refcite{buechel2015opinion} and \refcite{etesami2018influence}. In the former, agents can be conforming, counter-conforming or truthful; in this connection, see also \refcite{bala2001conformism} for an alternative approach to conformity. They build upon the DeGroot model, whence it is a discrete-time framework. Moreover, judgments are one-dimensional, the optimization determining the expressed judgment of an agent is static, and their model does not include the effect of objective information in the dynamics of the population. In the latter, they propose a number of discrete-time game dynamics where the expressed judgment of each agents can be either binary or come from a continuum. 
In this dynamics, the behavior of agents range from manipulative to conformist.

Here, we consider a continuous-time model akin to the BC one. We work under the framework of control theory, stipulating performance criteria for each of the individuals determining their behavior as a result of some notion of equilibrium for the corresponding game. In this framework, we can allow for players to react to external signals. 

We now mention a few works treating problems that are related to ours from a distinct modeling viewpoint. In   \refcite{ellison1993rules,ellison1995word}, we find alternative approaches to social learning. The work   \refcite{blume2015linear} comprises a general study of static linear models. The paper   \refcite{calvo2004effects} develops an analysis of unemployment via a mechanism of information exchange within a network. Two recent approaches to social learning are   \refcite{arieli2019multidimensional}, with random networks, and   \refcite{mossel2020social}, which accounts for the reaction to learning from private signals with a focus on static equilibria. Under Bayesian learning and influence of external information, in the work   \refcite{rosenberg2009informational} there is an analysis of emergence of consensus.

Our model comprises a finite population of strategically interacting individuals. Each player has a multidimensional true judgment on a variety of matters, and chooses to express another one that can possibly deviate from it. The expressed judgment is determined by each player according to her objective criteria; see   \refcite{bailo2018pedestrian} for the application of a related idea in pedestrian dynamics. In this context, the agents assess their performance via functionals that are constituted of two parts. One of them regards differences between the expressed judgment and two quantities: the true judgment, akin to a cognitive dissonance stress; the average population judgment, which models the behavior (either persuasive or conforming) of the corresponding agent. The other piece forming the functional is through where we introduce the effect of objective information in the game. The state variables evolve in time as a result of the interaction of each player with the expressed judgments profile of the population.

We consider two distinct settings. The first one is a competitive game. We prove the existence of Nash equilibrium, and that it is in fact unique under suitable assumptions. This is a natural notion of equilibrium, e.g., if we think agents are continuously debating and trying to convince one another, in a accordance to what suits their nature. We provide some numerical illustrations to showcase the rather rich dynamics we obtain resulting from the various possible configurations, departing from the same initial judgments profile. The second framework we investigate is the one in which players cooperate. We are able to characterize the Pareto front, and also numerically illustrate the resulting strategies that are optimal in this sense. Understanding cooperative formation can shed light, e.g., in the study of legislative bargaining, see   \refcite{gomes2005dynamic}. The latter setting seems to be reasonable for making conceptual considerations on this problem.

We organize the remainder of this paper as follows. In Section \ref{sec:model}, we present the technical aspects of the model, such as the evolution of the state variables, and the performance criteria of the individuals in the population. We also provide some well-posedness results that will be of major importance in the work. Then, we consider the competitive setting in Section \ref{sec:Competitive}, characterizing the appropriate equilibria, and discussing the asymptotic behavior of them. Then, in Section \ref{sec:Numerics} we provide a numerical algorithm of the equilibria we previously found, and also present many experiments of possible configurations that we can attain. In Section \ref{sec:Coalitional} we proceed in a similar manner, but supposing that agents among the population cooperate. Lastly, we present  our concluding remarks in Section \ref{sec:Conclusions}.

\section{The model} \label{sec:model}

\subsection{Presentation of the model}

Let us consider a population of $N > 1$ agents labeled by $i \in \mathcal{N} := \left\{1,...,N\right\}.$ Interactions among players will occur throughout a time horizon $\left[0,T\right],$ for a fixed $T>0.$ For $i \in \mathcal{N},$ we represent the actual judgments of player $i$ at time $t \in \left[0,T\right]$ by a multi-dimensional vector $x_i(t) \in \mathbb{R}^d,$ $d\geqslant 1,$ whereas we denote the judgment this agent chooses to express by $\omega_i.$ For instance, we can regard $x_i$ and $\omega_i$ as one-dimensional, thus denoting the real and expressed judgments, respectively, of agent $i$ about a situation containing two opposing extremes. Denoting the radical positions by $X$ and $Y,$ upon proper scaling, we can consider that person $i$ holding position $X$ (resp., $Y$) is such that $x_i = 0$ (resp., $x_i=1$), whereas we would represent an extremist advocate of position $X$ by $\omega_i = 0$ (resp., $\omega_i = 1$). In this setting, we can interpret people located in positions in between $0$ and $1$ in the obvious way.

In general, we assume the following dynamics for the system: 
\begin{equation} \label{eq:BasicModel}
    \begin{cases}
    \dot{x}_i(t) = \sum_{j=1}^N K_{ij}\left( \omega_j(t) - x_i(t)\right),\, 0 < t < T,\\
    x_i(0) = x_{i0}.
    \end{cases}
\end{equation}
Above, the functions $K_{ij}$ are the interaction kernels. Henceforth, we make the subsequent assumptions on them.
\begin{itemize}
    \item[\textbf{(A)}] For each $(i,j) \in \mathcal{N}^2,$ we have $K_{ij}(z)= a_{ij}(z)z,$ for a $C^2_b$ non-negative function $a_{ij} : \mathbb{R}^d \rightarrow \mathbb{R} .$
\end{itemize}

Regarding the expressed judgments, we assume $\omega_i \in \mathcal{A}_i,$ for an admissible control set of the form
$$
\mathcal{A}_i := \left\{ \omega_i \in L^2(0,T): \omega_i(t) \in A_i \text{ for almost every } t\in \left[0,T\right]\right\},
$$
where $A_i \subseteq \mathbb{R}^d.$ We write $\mathcal{A} := \Pi_{i=1}^N \mathcal{A}_i.$ Whenever we want to emphasize $T$ in the definition of $\mathcal{A},$ we will write $\mathcal{A}_T \equiv \mathcal{A}.$ Moreover, we fix the subsequent assumption on the action spaces: 
\begin{itemize}
    \item[\textbf{(B)}] The set $A_i$ is a closed and convex subset of $\mathbb{R}^d,$ and there exists $R>0$ such that, for every $i \in \mathcal{N},$ we have $A_i \subseteq \left[-R,R\right]^d.$  
\end{itemize}
\begin{remark} \label{rem:projection}
Hereafter, we will denote the $L^2(0,T)^N-$projection over $\mathcal{A}_i$ by $P_{\mathcal{A}_i}.$
\end{remark}

Thus, instead of reacting to the real judgments of other players, we consider that agent $i$ interacts with the profile of expressed judgments. We advocate that this assumption is more realistic, for we do not expect that player $i$ would be able to identify the true opinions of the others, unless they deliberately choose to express them, and are capable of doing so effectively. This does not mean that player $i$ does not acknowledge at all the real judgments of the other players, as these are taken into account in the formation of $\omega_j,$ for each $j \in \mathcal{N},$ as we will later see in our main results. Thus, the judgments of player $i$ will have an evolution indirectly impacted by $x_j,$ for $j\neq i,$ viz., through the choice that player $j$ makes for $\omega_j.$ 

We also point out our inclusion of the term $K_{ii}\left( \omega_i - x_i \right)$ in the dynamics \eqref{eq:BasicModel}. This represents the effect that, when emitting a judgment that is not the true one of the agent, a tension is created. Consequently, through this term, the actual judgment of this agent ought to be pushed towards the dissimulated one. This is an instance in which we introduce an effect akin to cognitive dissonance in our framework.

We assume that all persons within the population are rational. The way that they will select their expressed judgments is founded on objective criteria. More precisely, for $i \in \mathcal{N},$ the agent $i$ assigns a functional $J_i : \mathcal{A} \rightarrow \mathbb{R}$ as follows
\small
\begin{equation} \label{eq:ObjCriteria}
    J_i(\omega_i;\boldsymbol{\omega}_{-i}) := \int_0^T \left[ \frac{\left(1-\delta_i\right)}{2}|\omega_i(t) - x_i(t)|^2 + \frac{\delta_i}{2}\left|\overline{\omega}(t) - x_{i}(t) \right|^2 + \zeta_i \lambda\left( \overline{x}(t)\right) \right]\,dt,
\end{equation}
\normalsize
where we employed the notations 
$$
\boldsymbol{\omega}_{-i} := \left( \omega_1,\ldots,\omega_{i-1},\omega_{i+1},\ldots,\omega_N \right)^\intercal,
$$
$$
\left(\omega_i;\boldsymbol{\omega}_{-i}\right) := \boldsymbol{\omega},
$$
\begin{equation} \label{eq:AvgOmegaDefn}
    \overline{\omega}(t) := \frac{1}{N} \sum_{j=1}^N \omega_j(t),
\end{equation}
i.e., the quantity $\overline{\omega}$ figuring in \eqref{eq:ObjCriteria} denotes the average expressed judgment of the population, whereas $\overline{x}$ is their true counterpart 
\begin{equation} \label{eq:AvgXDefn}
    \overline{x}(t) := \frac{1}{N} \sum_{j=1}^N x_j(t).
\end{equation} 

Let us now discuss how we structured the functional \eqref{eq:ObjCriteria}. It is of the form
\begin{equation} \label{eq:ObjCriteria_rewritten}
    J_i\left( \boldsymbol{\omega} \right) = \frac{1}{2}\widetilde{J}_i\left( \boldsymbol{\omega} \right) + \zeta_i I\left( \boldsymbol{\omega} \right),
\end{equation}
with
$$
\widetilde{J}_i\left( \boldsymbol{\omega} \right) := \left(1-\delta_i\right) \int_0^T |\omega_i(t) - x_i(t)|^2\,dt + \delta_i \int_0^T \left|\overline{\omega}(t) - x_{i}(t) \right|^2\,dt, 
$$
and
$$
I\left( \boldsymbol{\omega} \right) := \int_0^T \lambda\left( \overline{x}(t)\right) \,dt.
$$
We begin by making some considerations on $\widetilde{J}_i.$

The part of $\widetilde{J}_i$ comprising
\begin{equation} \label{eq:Part1Obj}
    \int_0^T \left| x_i(t) - \omega_i(t) \right|^2\,dt
\end{equation}
is another instance in which we model cognitive dissonance. The piece 
\begin{equation} \label{eq:Part2Obj}
    \int_0^T \left| x_i(t) - \overline{\omega}(t) \right|^2\,dt
\end{equation}
brings into \eqref{eq:ObjCriteria} the deviation between the actual judgment of the agent and the global average judgment. For $0 < \delta_i <1$ and for each $\boldsymbol{\omega}_{-i} \in \mathcal{A}_{-i} := \Pi_{j\neq i}\mathcal{A}_j,$ we can see $\omega_i \in \mathcal{A}_i \mapsto \widetilde{J}_i\left( \omega_i; \boldsymbol{\omega}_{-i} \right) \in \mathbb{R}$ as the functional whose minimum is a Pareto optimal strategy for the bi-objective problem \eqref{eq:Part1Obj}-\eqref{eq:Part2Obj} (in the $i-$th direction) --- we will take back to this discussion in Section \ref{sec:Coalitional}. Regarding the parameter $\delta_i \in \left]-\infty,1\right[,$\footnote{This asymmetric interval for the parameters $\delta_i$ results from our particular parameterization of the model.} we observe that it represents the persuasiveness/conformity level of the agent. Thus, an agent who seeks to convince others of having the same judgment as her true one, has $\delta_i > 0,$ as $\delta_i$ enters in \eqref{eq:ObjCriteria} directly proportionally to \eqref{eq:Part1Obj}. Similarly, people who tend to conform to the average populations' judgment have $\delta_i < 0,$ this effect being more intense the larger $|\delta_i|$ is. Finally, having $\delta_i = 0$ is proper of a truthful person, as such an agent only values expressing a judgment close to her real one, being to an extent indifferent to the average population expressed judgment.

Here, we stipulate that people envisage to interact with the average manifested judgment of the whole population. This is in distinction to other works in the literature, such as   \refcite{buechel2015opinion}, in which agents only regard a local average (in the bounded confidence sense). Employing similar techniques as the ones we will present here, we could consider alternatives, such as replacing $\overline{\omega}$ in \eqref{eq:ObjCriteria} by
$$
\overline{\omega}_i(t) := \sum_{j=1}^N a_{ij}\left( \omega_j(t) - x_i(t) \right)\omega_j(t).
$$
For simplicity, from now on, we stick to \eqref{eq:AvgOmegaDefn} and \eqref{eq:AvgXDefn}. 

Let us now consider the $I$ component in \eqref{eq:ObjCriteria_rewritten}. The function $\lambda : \mathbb{R}^{d} \rightarrow \mathbb{R}$ is supposed to encode exogenous objective information into the individual criterion. Here, the way we choose to model this is to assume that, as players' actions impact reality, there will be a feedback effect perceived by them through the quantity $\lambda.$ However, agents do not necessarily have the same sensitivity to the same information. This is the reason why we introduce the parameter $\zeta_i.$ In this manner, there is a balance, through the latter constant, between the willingness of player $i$ to persuade/conform, and their reaction to real evidence that is faced as consequence of the aggregate interaction between the population and the environment where they are situated in. Regarding $\lambda,$ we suppose:

\begin{itemize}
    \item[\textbf{(C)}] The function $\lambda$ is of class $C^2_b.$ 
\end{itemize}

We proceed to give an example consisting of the main motivation for taking $I$ in the form we exposed in \eqref{eq:ObjCriteria_rewritten}.

\begin{example} \label{ex:Example}
Let us consider the one-dimensional setting, say with judgments varying over the action space $\left[0,\,1\right].$ We consider that objective information corroborates the choice of position $1,$ in such a way that the adoption of position $0$ by the population leads to a worst outcome in terms of the third summand within the integral figuring in \eqref{eq:ObjCriteria}. For concreteness, we propose here
$$
\lambda(\overline{x}) := \lambda_{0} + \lambda_{1}\left( 1 - \overline{x} \right) ,
$$
with $\lambda_0,\lambda_1 > 0.$ Let us designate the number of occurrences of undesirable events that would be mitigated if people were to adopt position $1$ by $N^{\boldsymbol{\omega}}.$ We assume that $N^{\boldsymbol{\omega}}$ is an nonhomogeneous Poisson point process with intensity $\lambda\left(\overline{x}\right),$ for a given profile $\boldsymbol{\omega} \in \mathcal{A}$ (where  $\overline{x}$ results from \eqref{eq:AvgXDefn}, for true judgments $\boldsymbol{x} = \left(x_1,...,x_N\right)^\intercal$ given by \eqref{eq:BasicModel}). The intensity of $N^{\boldsymbol{\omega}}$ would be minimal (equal to $\lambda_0$) if $x_i \equiv 1,$ for all $i \in \mathcal{N}.$ In general, we observe that minimizing the expected value of $N^{\boldsymbol{\omega}}_T$ amounts to minimizing
$$
\mathbb{E}\left[ N^{\boldsymbol{\omega}}_T \right] = \int_0^T \lambda\left(\overline{x}(t)\right)\,dt = I\left( \boldsymbol{\omega} \right).
$$
\end{example}

\subsection{On the well-posedness of the model}

The subsequent results are devoted to establishing basic properties of the model \eqref{eq:BasicModel}. We will first prove existence and uniqueness of a solution $\boldsymbol{x},$ for each initial datum $\boldsymbol{x}_0 = \left(x_{01},\ldots, x_{0N}\right)^\intercal$ and each given profile of strategies $\boldsymbol{\omega} \in \mathcal{A}.$ Then, we will prove a continuity property of the true judgments in terms of the expressed judgments. We emphasize that, although we consider at first the behavior $\boldsymbol{x}$ in terms of $\boldsymbol{\omega},$ the equilibria we will investigate will actually involve a fixed point relation connecting these two quantities. We address these questions in Sections \ref{sec:Competitive} and \ref{sec:Coalitional}. We proceed to provide the definition of the solution concept we consider for the ODEs of Eq. \ref{eq:BasicModel}.

\begin{definition}
Given $\boldsymbol{\omega} \in \mathcal{A},$ we say that a continuous function $\boldsymbol{x} : \left[0,T\right] \rightarrow \mathbb{R}^{dN}$ is a solution of \eqref{eq:BasicModel} if, for each $i\in \mathcal{N}$ and $t \in \left[0,T\right],$ we have 
$$
x_i(t) = x_{0i} + \int_0^t \sum_{j=1}^N K_{ij}\left(\omega_j(u) - x_i(u)\right) \,du.
$$
\end{definition}
Thus, we resort to the concept of solutions in the sense of Caratheodory. We have the following result on existence and uniqueness, which we can prove, under assumption $(\textbf{A}),$ using the same methodology as in Chapter $2$ of   \refcite{teschl2012ordinary} --- see Theorems 2.5 and 2.17 therein.

\begin{proposition} \label{prop:ExistenceAndUniqueness}
For each $\boldsymbol{\omega} \in \mathcal{A},$ the model \eqref{eq:BasicModel} admits a unique (globally defined) solution.
\end{proposition}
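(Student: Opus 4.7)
The plan is to fix $\boldsymbol{\omega}\in\mathcal{A}$ and reinterpret \eqref{eq:BasicModel} as a Caratheodory ODE system in $\mathbb{R}^{dN}$ with right-hand side $F(t,\boldsymbol{x}):=(F_1(t,x_1),\ldots,F_N(t,x_N))^\intercal$, where
$$
F_i(t,y):=\sum_{j=1}^{N} K_{ij}\bigl(\omega_j(t)-y\bigr),\qquad y\in\mathbb{R}^d.
$$
The first step is to verify the Caratheodory framework. Since each $\omega_j$ is a measurable function of $t$ and each $K_{ij}$ is continuous (indeed $C^2$, by \textbf{(A)}), the map $t\mapsto F_i(t,y)$ is measurable for every fixed $y$; continuity of $y\mapsto F_i(t,y)$ holds for every $t$ by continuity of $K_{ij}$. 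Moreover, assumption \textbf{(B)} guarantees that $\omega_j(t)$ lies a.e.\ in the compact set $[-R,R]^d$, so when $y$ ranges over a bounded set the arguments of $K_{ij}$ stay in a bounded set as well.

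The second step is to establish a local Lipschitz bound in $y$, uniform in $t$. By \textbf{(A)}, the $C^2_b$ regularity of $a_{ij}$ gives a global bound $\|a_{ij}\|_\infty\leqslant M$ and a global Lipschitz constant $L$ for $a_{ij}$. From $K_{ij}(z)=a_{ij}(z)z$ one obtains, for $z_1,z_2$ in a ball of radius $\rho$,
$$
|K_{ij}(z_1)-K_{ij}(z_2)|\leqslant \bigl(M+L\rho\bigr)\,|z_1-z_2|,
$$
which together with the uniform bound on $\omega_j$ produces a local Lipschitz constant in $y$ that is independent of $t$. Combined with the Caratheodory conditions above, this puts us in the setting of Theorem~2.5 of \cite{teschl2012ordinary}, yielding a unique maximal Caratheodory solution.

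The third step rules out blow-up, so as to invoke the extension statement of Theorem~2.17 of \cite{teschl2012ordinary} and conclude the solution is defined on all of $[0,T]$. Using $|K_{ij}(z)|\leqslant M|z|$ and $|\omega_j(t)|\leqslant R\sqrt{d}$ a.e., one gets the linear growth estimate
$$
|F_i(t,y)|\;\leqslant\;NM\bigl(R\sqrt{d}+|y|\bigr)\qquad\text{for a.e. }t,
$$
so that any solution of \eqref{eq:BasicModel} satisfies $|x_i(t)|\leqslant |x_{i0}|+NM\int_0^t (R\sqrt{d}+|x_i(s)|)\,ds$; Grönwall's inequality then furnishes an a priori bound depending only on $|x_{i0}|$, $N$, $M$, $R$, $d$ and $T$. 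This precludes escape in finite time, so the maximal solution obtained in the previous step extends uniquely to the whole interval $[0,T]$, as claimed.

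The main obstacle is really just bookkeeping: one must be a bit careful that the Lipschitz and growth estimates hold uniformly in $t$ despite $\omega_j$ being only in $L^2$, which is handled here by exploiting \textbf{(B)} to upgrade $\omega_j$ to an essentially bounded function. Once that uniformity is in hand, the result is a textbook application of the Caratheodory existence/uniqueness and extension theorems.
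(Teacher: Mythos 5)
Your proof is correct and follows exactly the route the paper indicates: it verifies the Caratheodory and local Lipschitz hypotheses from assumptions \textbf{(A)} and \textbf{(B)}, invokes Theorems~2.5 and~2.17 of the cited ODE reference, and rules out finite-time blow-up via linear growth and Grönwall. The paper leaves these details to the reader, so your write-up is simply a fleshed-out version of the same argument.
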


In a similar fashion, we can use the same techniques allowing us to prove Proposition \ref{prop:ExistenceAndUniqueness} to obtain the subsequent result.

\begin{proposition} \label{prop:ContinuityOnOmega}
Let us assume that $\boldsymbol{\omega},\widetilde{\boldsymbol{\omega}} \in \mathcal{A}$ are two admissible expressed judgments, as well as $\boldsymbol{x}_0,\widetilde{\boldsymbol{x}}_0 \in \mathbb{R}^d$ are two initial configurations of true judgments. Let us denote by $\boldsymbol{x},\widetilde{\boldsymbol{x}}$ the solutions of \eqref{eq:BasicModel} corresponding to the expressed judgments-initial datum couples $\left( \boldsymbol{\omega},\boldsymbol{x}_0 \right)$ and $\left( \widetilde{\boldsymbol{\omega}},\widetilde{\boldsymbol{x}}_0\right)$, respectively. Then, for each $i \in \mathcal{N},$
$$
\sup_{0\leqslant t \leqslant T}\max_j|x_j(t)-\widetilde{x}_j(t)| \leqslant Ce^{C T}\left( \left|x_{0i} - \widetilde{x}_{0i}\right| + \sum_{j=1}^N \|\omega_j - \widetilde{\omega}_j\|_{L^1(0,T)} \right),
$$
where $C$ is independent of $T.$
\end{proposition}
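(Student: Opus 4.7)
The plan is to proceed directly from the Caratheodory form of \eqref{eq:BasicModel} and run a Grönwall argument, in the spirit of the reference to \refcite{teschl2012ordinary} mentioned by the authors. A key structural observation simplifying matters is that in \eqref{eq:BasicModel} the equation for $x_i$ is decoupled from $x_j$ for $j\neq i$ at the level of the true judgments: its right-hand side depends on $x_i$ alone among the $x_k$'s, the other components entering only through $\boldsymbol{\omega}$. This is precisely what makes only $|x_{0i} - \widetilde{x}_{0i}|$ appear on the right-hand side of the claimed estimate, and it means the bound can be established componentwise and then the maximum taken.

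First I would obtain an a priori bound on $\boldsymbol{x}$ and $\widetilde{\boldsymbol{x}}$ uniform in $T$. Computing $\tfrac{d}{dt}\tfrac{1}{2}|x_i(t)|^2 = \sum_j a_{ij}(\omega_j - x_i)[x_i \cdot \omega_j - |x_i|^2]$ and using the non-negativity of the $a_{ij}$ from (A) together with $|\omega_j(t)| \leqslant R\sqrt{d}$ from (B), one sees that $\tfrac{d}{dt}|x_i|^2 \leqslant 0$ as soon as $|x_i|$ exceeds a threshold depending only on $R$ and $d$. Thus $\boldsymbol{x}, \widetilde{\boldsymbol{x}}$ remain in a compact set $\mathcal{K}$ depending only on $\boldsymbol{x}_0, \widetilde{\boldsymbol{x}}_0$, and $R$. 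Since each $a_{ij}$ is $C^2_b$, the kernel $K_{ij}(z) = a_{ij}(z)z$ is $C^1$ with derivative bounded on bounded sets; there is then a constant $L$, depending only on the $C^2_b$-norms of the $a_{ij}$, on $R$, and on $\mathcal{K}$, such that $K_{ij}$ is $L$-Lipschitz on $\{\omega - x : |\omega| \leqslant R\sqrt{d},\, x \in \mathcal{K}\}$, with $L$ independent of $T$.

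Next, subtracting the integral forms satisfied by $x_i$ and $\widetilde{x}_i$ and splitting each difference via the triangle inequality,
$$K_{ij}(\omega_j - x_i) - K_{ij}(\widetilde{\omega}_j - \widetilde{x}_i) = [K_{ij}(\omega_j - x_i) - K_{ij}(\widetilde{\omega}_j - x_i)] + [K_{ij}(\widetilde{\omega}_j - x_i) - K_{ij}(\widetilde{\omega}_j - \widetilde{x}_i)],$$
the Lipschitz bound produces
$$|x_i(t) - \widetilde{x}_i(t)| \leqslant |x_{0i} - \widetilde{x}_{0i}| + L \sum_{j=1}^N \int_0^t |\omega_j - \widetilde{\omega}_j|\,du + LN \int_0^t |x_i(u) - \widetilde{x}_i(u)|\,du.$$
Grönwall's inequality then yields $|x_i(t) - \widetilde{x}_i(t)| \leqslant e^{LNt}\bigl(|x_{0i} - \widetilde{x}_{0i}| + L \sum_{j=1}^N \|\omega_j - \widetilde{\omega}_j\|_{L^1(0,T)}\bigr)$, whence the claim on taking $\sup_t$ and $\max_i$.

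The main obstacle, modest but genuine, is ensuring that $C$ is truly $T$-independent as stated. Because $\nabla K_{ij}(z)$ contains the term $\nabla a_{ij}(z) \otimes z$, which grows linearly in $|z|$, a naive Grönwall bound applied to $|x_i|$ itself would produce a Lipschitz constant growing exponentially with $T$ and destroy the desired dependence. It is precisely the dissipative identity $z \cdot K_{ij}(z) = a_{ij}(z)|z|^2 \geqslant 0$, exploited in the first step above, that circumvents this issue by confining $\boldsymbol{x}$ to a $T$-independent compact set from the outset.
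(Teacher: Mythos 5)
Your argument is correct and is essentially the route the paper itself takes: the paper gives no explicit proof, deferring to the Gr\"onwall-type continuous-dependence techniques it cites for Proposition \ref{prop:ExistenceAndUniqueness}, and your a priori confinement step reproduces the paper's own subsequent lemma bounding $|x_i(t)|$ by $R$ via the same dissipative identity $x_i\cdot K_{ij}(\omega_j-x_i)\leqslant a_{ij}(\cdot)\left(|x_i||\omega_j|-|x_i|^2\right)$. Note only that what you actually establish is the componentwise estimate $\sup_{0\leqslant t\leqslant T}|x_i(t)-\widetilde{x}_i(t)| \leqslant Ce^{CT}\left( |x_{0i}-\widetilde{x}_{0i}| + \sum_{j=1}^N\|\omega_j-\widetilde{\omega}_j\|_{L^1(0,T)}\right)$, which is the correct reading of the proposition --- as literally written, with $\max_j$ on the left and a single index $i$ on the right, the statement fails (take $\boldsymbol{\omega}=\widetilde{\boldsymbol{\omega}}$ and initial data agreeing in the $i$-th component only), so your version is the one to keep.
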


In the one-dimensional case, it is straightforward to derive from the component-wise uniqueness of \eqref{eq:BasicModel} we proved in Proposition \ref{prop:ExistenceAndUniqueness} the following monotonicity property of the individual trajectories. 

\begin{lemma} \label{lem:Monotonicity}
Let us assume $d=1,$ and that $K_{ij}$ is independent of $(i,j),$ i.e., $K_{ij} \equiv K_{i^\prime j^\prime},$ for every $i,\,i^\prime,\, j,\,j^\prime \in \mathcal{N}.$ Then, for any given profile of expressed judgments $\boldsymbol{\omega},$ the relation $x_{i,0} \leqslant x_{j,0}$ implies $x_i(t)\leqslant x_j(t),$ for every $t \in \left[0,T\right].$
\end{lemma}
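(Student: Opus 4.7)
The plan is to exploit the uniqueness of solutions to scalar ODEs, as indicated by the phrasing of the lemma. The key observation is that under the hypothesis $K_{ij} \equiv K$ for all pairs, the dynamics for each component decouple in a peculiar way: for a fixed profile $\boldsymbol{\omega} \in \mathcal{A}$, we can write
$$
\dot{x}_i(t) = F(t, x_i(t)), \qquad F(t,y) := \sum_{k=1}^N K(\omega_k(t) - y),
$$
and $F$ does not depend on $i$. Thus $x_1, \ldots, x_N$ are all Carathéodory solutions of the \emph{same} scalar non-autonomous ODE, distinguished only by their initial conditions. Note that assumption \textbf{(A)} implies $K \in C^2_b$, so $y \mapsto F(t,y)$ is globally Lipschitz in $y$ uniformly in $t$, which is exactly the regularity used to establish Proposition \ref{prop:ExistenceAndUniqueness}.

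Next, I would argue by contradiction. Suppose $x_{i,0} \leqslant x_{j,0}$ but that there exists $t_0 \in (0,T]$ with $x_i(t_0) > x_j(t_0)$. By continuity of $x_i - x_j$ and the fact that $(x_j-x_i)(0)\geqslant 0$, the set $\{t \in [0,t_0] : x_j(t) \geqslant x_i(t)\}$ is nonempty and closed; let $t_1$ be its supremum. Then $x_i(t_1) = x_j(t_1)$, and $x_i(t) > x_j(t)$ on $(t_1, t_0]$. But now $x_i$ and $x_j$ are two Carathéodory solutions on $[t_1, T]$ of the scalar Cauchy problem
$$
\dot{y}(t) = F(t, y(t)), \qquad y(t_1) = x_i(t_1),
$$
and the uniqueness argument used in Proposition \ref{prop:ExistenceAndUniqueness} (applied with $t_1$ as the initial time, which is fine because the Lipschitz bound is uniform in $t$) forces $x_i \equiv x_j$ on $[t_1, T]$. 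This contradicts $x_i(t_0) > x_j(t_0)$, and completes the proof.

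I do not expect a real obstacle; the only thing to verify carefully is that Proposition \ref{prop:ExistenceAndUniqueness} indeed gives uniqueness when the initial time is $t_1$ rather than $0$, which is immediate from the uniformity of the Lipschitz constant of $K$. If one prefers a fully explicit argument avoiding the contradiction, one can set $\phi(t) := x_j(t) - x_i(t)$ and use the fundamental theorem of calculus termwise to write
$$
\dot{\phi}(t) = \sum_{k=1}^N \bigl[K(\omega_k(t) - x_j(t)) - K(\omega_k(t) - x_i(t))\bigr] = -\alpha(t)\,\phi(t),
$$
with
$$
\alpha(t) := \sum_{k=1}^N \int_0^1 K'\!\bigl(\omega_k(t) - x_i(t) - \sigma \phi(t)\bigr)\,d\sigma,
$$
a bounded measurable function of $t$. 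The resulting linear scalar ODE integrates to $\phi(t) = \phi(0)\exp\!\bigl(-\int_0^t \alpha(s)\,ds\bigr)$, so the sign of $\phi$ is preserved on $[0,T]$, which yields the claim.
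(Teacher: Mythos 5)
Your proof is correct and follows exactly the route the paper indicates: since $K_{ij}\equiv K$, all the $x_i$ solve one and the same scalar non-autonomous ODE, so the component-wise uniqueness from Proposition~\ref{prop:ExistenceAndUniqueness} forbids trajectories from crossing (your integrating-factor variant makes this explicit). One cosmetic point: assumption \textbf{(A)} only makes $K$ locally Lipschitz (it gives $a\in C^2_b$, not $K\in C^2_b$), but that suffices here because the controls and the continuous states remain in a compact set on $\left[0,T\right]$.
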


Throughout this whole paper, the next property will be key for the study of the concepts of equilibrium that we will consider. It asserts that solutions will be confined to a fixed hyper-cube, as long as the initial condition originates within it.

\begin{lemma}
Let us suppose that the initial set of judgments satisfy $\left|x_{i,0}\right|\leqslant R$ (cf. assumption $\textbf{(B)}$). Then, for every $t \in \left[0,T\right]$, we have $\left|x_i(t)\right|\leqslant R.$
\end{lemma}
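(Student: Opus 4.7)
The goal is to show that the hypercube $[-R,R]^d$ is positively invariant under the dynamics \eqref{eq:BasicModel}. My approach is a componentwise integrating-factor argument that exploits the sign/structure of the kernels imposed by (\textbf{A}). For each fixed $k \in \{1,\ldots,d\}$, I would write the scalar equation for the $k$-th component $x_i^k$ of $x_i$ in the linear form
$$
\dot{x}_i^k(t) + \alpha_i(t)\, x_i^k(t) = \beta_i^k(t),
$$
where $\alpha_i(t) := \sum_{j=1}^N a_{ij}(\omega_j(t) - x_i(t)) \geqslant 0$ and $\beta_i^k(t) := \sum_{j=1}^N a_{ij}(\omega_j(t) - x_i(t))\,\omega_j^k(t)$. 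Both coefficients are bounded and measurable by (\textbf{A}) and Proposition~\ref{prop:ExistenceAndUniqueness}.

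Solving this scalar ODE by the integrating factor $\exp\!\left(\int_0^t \alpha_i(u)\,du\right)$ yields the representation
$$
x_i^k(t) \;=\; e^{-\int_0^t \alpha_i(u)\,du}\, x_{i,0}^k \;+\; \int_0^t e^{-\int_s^t \alpha_i(u)\,du}\, \beta_i^k(s)\,ds.
$$
The crux of the plan is the identity
$$
e^{-\int_0^t \alpha_i(u)\,du} \;+\; \int_0^t e^{-\int_s^t \alpha_i(u)\,du}\, \alpha_i(s)\,ds \;=\; 1,
$$
obtained by recognizing the integrand as $\frac{d}{ds}\bigl[e^{-\int_s^t \alpha_i(u)\,du}\bigr]$, combined with the observation that whenever $\alpha_i(s)>0$ the ratio $\beta_i^k(s)/\alpha_i(s)$ is itself a convex combination of the values $\{\omega_j^k(s)\}_{j\in\mathcal{N}}$, with weights $a_{ij}(\omega_j(s) - x_i(s))/\alpha_i(s)\geqslant 0$. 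Together, these show that $x_i^k(t)$ is expressible as a convex combination of $x_{i,0}^k$ and the quantities $\{\omega_j^k(s) : s \in [0,t],\, j \in \mathcal{N}\}$.

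Since $|x_{i,0}^k| \leqslant R$ by hypothesis and $|\omega_j^k(s)| \leqslant R$ for almost every $s$ and every $j$ by (\textbf{B}), the convex-combination representation immediately gives $|x_i^k(t)| \leqslant R$ for every $k$ and $t \in [0,T]$, whence the conclusion. The only technicality that requires attention is the null set of times at which $\alpha_i(s)=0$, but there the integrand vanishes outright, so the convex-combination interpretation trivially extends; beyond this, the argument is a direct application of the variation-of-constants formula and the non-negativity of the $a_{ij}$.
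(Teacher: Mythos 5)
Your proof is correct, and it takes a genuinely different route from the paper's. The paper argues by a barrier-type invariance computation: at a time $t$ where $\left|x_i(t)\right| = R$, it differentiates $u \mapsto \left|x_i(u)\right|$, expands $x_i \cdot K_{ij}(\omega_j - x_i) = a_{ij}(\cdot)\left(x_i \cdot \omega_j - R^2\right)$, and uses Cauchy--Schwarz together with $a_{ij} \geqslant 0$ to conclude the derivative is nonpositive at the boundary. You instead isolate the non-negative coefficient $\alpha_i = \sum_j a_{ij}(\omega_j - x_i)$, solve the resulting linear scalar equation for each component by variation of constants, and exhibit $x_i^k(t)$ as a convex combination of $x_{i,0}^k$ and the values $\omega_j^k(s)$. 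Your route buys three things. First, it is better matched to the hypotheses: assumption \textbf{(B)} places $A_j$ inside the hypercube $\left[-R,R\right]^d$, i.e., it bounds \emph{components} of $\omega_j$, and the text introducing the lemma speaks of confinement to a hypercube; your componentwise argument proves exactly that, whereas the paper's computation tacitly reads $\left|\omega_j\right|\leqslant R$ in the same (Euclidean) norm as $\left|x_i\right|$, which \textbf{(B)} only yields up to a factor $\sqrt{d}$. Second, the convex-combination representation is strictly stronger: it shows $x_i^k(t)$ lies in the convex hull of the initial datum and the control values, so any closed convex set containing both is invariant. Third, it bypasses the step (left implicit in the paper) of passing from a pointwise derivative inequality at the boundary to genuine forward invariance, which strictly speaking requires a first-exit-time or comparison argument. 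One small inaccuracy in your write-up: the set $\left\{s : \alpha_i(s) = 0\right\}$ need not be a null set; but, as you essentially observe, on that set every $a_{ij}$ vanishes and hence so does $\beta_i^k(s)$, so the bound $\left|\beta_i^k(s)\right| \leqslant \alpha_i(s) R$ holds everywhere and the argument is unaffected.
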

\begin{proof}
In view of our assumption $\textbf{(B)},$ we have $\left|\omega_i\right|\leqslant R.$ If $\left|x_i(t)\right| = R,$ then at this time $t$ we have
\begin{align*}
    \frac{d}{du}\left|x_i(u)\right| \Big|_{u=t} &=  \frac{1}{2|x_i(t)|} x_i(t) \cdot \sum_{j=1}^N K_{ij}\left(\omega_j(t) - x_i(t)\right) \\
    &= \frac{1}{2}\sum_{j=1}^N a_{ij}\left(\omega_j(t) - x_i(t)\right) \left( \frac{x_i(t)\cdot \omega_j(t)}{R}  -  R \right) \\
    &\leqslant \frac{1}{2}\sum_{j=1}^N a_{ij}\left(\omega_j(t) - x_i(t)\right)\left( \left|\omega_j(t)\right| - R \right) \leqslant 0.
\end{align*}
Therefore, we indeed have $\left|x_i(t)\right|\leqslant R,$ for every $t \in \left[0,T\right].$
\end{proof}

\section{The non-cooperative game} \label{sec:Competitive}

\subsection{Nash equilibria}

In this section, we investigate equilibria resulting from the assumption that individuals are rational, and do not seek cooperation. Thus, we are concerned with strategies constituting a Nash equilibrium, which we define as follows.
\begin{definition}
A strategy profile $\boldsymbol{\omega}^* = (\omega_1,...,\omega_N)^\intercal \in \mathcal{A}$ is an open-loop Nash equilibrium if, and only if, for each $i \in \mathcal{N}$ and each $\omega_i \in \mathcal{A}_i,$ the relation
$$
J_i(\omega_i^*;\boldsymbol{\omega}_{-i}^*) \leqslant J_i(\omega_i;\boldsymbol{\omega}_{-i}^*)
$$
holds.
\end{definition}

\subsection{Variational approach}

Our next step in the analysis of Nash equilibria is to derive necessary conditions that such a strategy has to satisfy. We employ techniques of variational analysis, characterizing the optimal strategies as solutions to a coupled ODE system. Moreover, this system is augmented with suitable adjoint parameters. We precisely state and prove this result in the sequel.

Theorem \ref{thm:NecessaryCondnNE} below appears in many disguises in the literature, see \refcite{basar2018handbook} and references therein. However, since it will be convenient to refer to steps of the proof later on, we include a complete proof.

\begin{theorem} \label{thm:NecessaryCondnNE}
A Nash equilibrium $\boldsymbol{\omega}^*$ must solve the fixed point equation
\begin{equation} \label{eq:FixedPoint}
    \boldsymbol{\omega}^*=\Phi\left[\boldsymbol{\omega}^*\right],
\end{equation}
where the mapping $\Phi : L^2(0,T)^N \rightarrow \mathcal{A}$ is defined as\footnote{For the definition of $P_{\mathcal{A}_i},$ see Remark \ref{rem:projection}.}
\begin{align} \label{eq:DefnOfPhi}
    \begin{split}
                \Phi[\boldsymbol{\omega}]_i = P_{\mathcal{A}_i} &\left( x_i- \frac{\delta_i}{( 1-\delta_i ) N}\left( \overline{\omega} - x_{i} \right) \right.\\
                &\left. \hspace{1.0cm}- \frac{1}{(1-\delta_i)}\sum_{j=1}^N K_{ji}^\prime\left( \omega_i - x_j \right)^\intercal \varphi_{ji} \right),
    \end{split}
\end{align}
with $\varphi_{ji}$ being the solutions of 
\small
\begin{equation} \label{eq:AdjointParameters}
    \begin{cases}
        -\dot{\varphi}_{ji}(t) = - \sum_{l=1}^N K_{jl}^\prime\left(  \omega_l(t) - x_j(t)\right)^\intercal\varphi_{ji}(t) \\
        \hspace{1.5cm}+ \boldsymbol{1}_{i=j}\left\{ x_i(t) -\left[ \delta_i \overline{\omega}(t) + \left(1-\delta_i\right) \omega_i(t)\right] \right\} + \frac{\zeta_i}{N} \nabla\lambda \left(\overline{x}(t)\right) , \, 0\leqslant t \leqslant T,\\
        \varphi_{ji}(T) = 0.
    \end{cases}
\end{equation}
\normalsize
\end{theorem}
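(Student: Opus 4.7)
The plan is to apply standard variational/Pontryagin-type necessary conditions to each individual's control problem. Fix $i\in\mathcal{N}$: by the definition of Nash equilibrium, $\omega_i^{*}$ minimizes $\omega_i\mapsto J_i(\omega_i;\boldsymbol{\omega}_{-i}^{*})$ over $\mathcal{A}_i$. Since $\mathcal{A}_i$ is convex by assumption \textbf{(B)}, for any $\omega_i\in\mathcal{A}_i$ the perturbation $\omega_i^{\varepsilon}:=\omega_i^{*}+\varepsilon(\omega_i-\omega_i^{*})$ remains in $\mathcal{A}_i$ for every $\varepsilon\in[0,1]$, so the one-sided Gateaux derivative at $\varepsilon=0^{+}$ must be nonnegative. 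Writing $h_i:=\omega_i-\omega_i^{*}$, the first task is to compute this derivative explicitly.

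For this I would linearize the state around the equilibrium profile. Let $y_{ji}(t):=\partial_{\varepsilon}x_j^{\varepsilon}(t)\big|_{\varepsilon=0}$ denote the sensitivity of $x_j$ to the perturbation of $\omega_i$ in direction $h_i$. Differentiating \eqref{eq:BasicModel} formally at $\varepsilon=0$ yields
\begin{equation*}
\dot{y}_{ji}(t)=K_{ji}'(\omega_i^{*}(t)-x_j(t))\,h_i(t)-\sum_{l=1}^{N}K_{jl}'(\omega_l^{*}(t)-x_j(t))\,y_{ji}(t),\qquad y_{ji}(0)=0,
\end{equation*}
with the Fréchet-differentiability of the control-to-state map justified by assumption \textbf{(A)} and Gronwall estimates in the spirit of Proposition \ref{prop:ContinuityOnOmega}. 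A direct expansion of $J_i(\omega_i^{\varepsilon};\boldsymbol{\omega}_{-i}^{*})$ at $\varepsilon=0$ then gives
\begin{equation*}
\left.\tfrac{dJ_i}{d\varepsilon}\right|_{\varepsilon=0^{+}}=\int_0^T\!\!\Bigl[(1-\delta_i)(\omega_i^{*}-x_i)+\tfrac{\delta_i}{N}(\overline{\omega}^{*}-x_i)\Bigr]\cdot h_i\,dt+\int_0^T\sum_{j=1}^{N}c_j(t)\cdot y_{ji}(t)\,dt,
\end{equation*}
where $c_j(t):=\mathbf{1}_{i=j}\{x_i-(1-\delta_i)\omega_i^{*}-\delta_i\overline{\omega}^{*}\}+\tfrac{\zeta_i}{N}\nabla\lambda(\overline{x})$ coincides exactly with the inhomogeneity appearing in \eqref{eq:AdjointParameters}. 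Matching these two coefficients is the reason for the peculiar structure of the adjoint system.

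The decisive step is the adjoint manipulation. Defining $\varphi_{ji}$ as the solution of \eqref{eq:AdjointParameters} with terminal condition $\varphi_{ji}(T)=0$, integration by parts combined with $y_{ji}(0)=0$ yields $\int_0^T c_j\cdot y_{ji}\,dt=\int_0^T\varphi_{ji}^{\intercal}K_{ji}'(\omega_i^{*}-x_j)\,h_i\,dt$. Summing over $j$ eliminates the sensitivities and reduces the necessary condition to the variational inequality
\begin{equation*}
\int_0^T\!\Bigl[(1-\delta_i)(\omega_i^{*}-x_i)+\tfrac{\delta_i}{N}(\overline{\omega}^{*}-x_i)+\sum_{j=1}^{N}K_{ji}'(\omega_i^{*}-x_j)^{\intercal}\varphi_{ji}\Bigr]\cdot(\omega_i-\omega_i^{*})\,dt\geqslant 0,
\end{equation*}
valid for every $\omega_i\in\mathcal{A}_i$. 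Dividing through by $1-\delta_i>0$ and invoking the standard projection characterization of variational inequalities over the closed convex set $\mathcal{A}_i\subseteq L^2(0,T)^N$ delivers $\omega_i^{*}=\Phi[\boldsymbol{\omega}^{*}]_i$ in the form \eqref{eq:DefnOfPhi}. I expect the only nontrivial technical point to be the rigorous verification that the control-to-state map is differentiable with derivative $y_{ji}$ (handled by a Gronwall argument leveraging assumption \textbf{(A)}); the real substance is the bookkeeping that forces the coefficients $c_j$ to coincide with the forcing term of \eqref{eq:AdjointParameters}, at which point the claim reduces to the projection identity.
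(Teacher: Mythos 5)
Your proposal is correct and follows essentially the same route as the paper: linearize the state to obtain the sensitivities $y^{ji}$, introduce the adjoint system \eqref{eq:AdjointParameters}, integrate by parts to reduce the first variation to an integral against the perturbation direction, and conclude via the first-order condition. You are in fact slightly more explicit than the paper at the final step (the one-sided variational inequality over the convex set $\mathcal{A}_i$ and the projection characterization, which the paper compresses into ``the result promptly follows''), but this is a matter of exposition rather than a different argument.
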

\begin{proof} 
We compute the G\^ateaux derivative 
\begin{align*}
    \big\langle D_i J_i(\omega_i;\boldsymbol{\omega}_{-i}), v^i \big\rangle =&\, \int_0^T \left[ \delta_i(\overline{\omega}(t) - x_i(t))\cdot\left( \frac{1}{N}v^i(t)- y^i(t)\right) \right.\\
    &\left. \hspace{0.9cm}+\left(1-\delta_i\right)\left( \omega_i(t) - x_i(t)\right) \cdot \left( v^i(t) - y^i(t)\right) \right.\\
    &\left.\hspace{0.9cm} + \frac{\zeta_i}{N} \sum_{j=1}^N\nabla \lambda\left( \overline{x}(t)\right) \cdot y^{ji}(t) \right]\,dt,
\end{align*}
where
\footnotesize
\begin{equation} \label{eq:ODEforY}
    \begin{cases}   
        \dot{y}^{ji}(t) = - \sum_{l=1}^N K_{jl}^\prime\left(  \omega_l(t) - x_j(t)\right) y^{ji}(t) + K_{ji}^\prime\left( \omega_i(t) - x_j(t) \right)v^i(t),\, 0\leqslant t \leqslant T,\\
        y^{ji}(0) = 0,
    \end{cases}
\end{equation}
\normalsize
and $y^i:=y^{ii}.$ Let us introduce the adjoint parameters $\left\{ \varphi_{ji} \right\}_{i,j}$ as the solutions of \eqref{eq:AdjointParameters}. In this way, we deduce
\footnotesize
\begin{align*}
    \big\langle D_iJ_i(\omega_i;\boldsymbol{\omega}_{-i}),&\,v^i\big\rangle \\
    &= \int_0^T \left\{ v^i(t)\left[\left(1-\delta_i\right)\left( \omega_i(t) - x_i(t) \right) + \frac{\delta_i}{N} \left( \overline{\omega}(t) - x_{i}(t) \right) \right] \right.\\
    &\hspace{1.0cm} + \left. \sum_{j=1}^N y^{ji}(t)\left\{ \boldsymbol{1}_{i=j}\left[ x_{i}(t) - \delta_i \overline{\omega}(t) + \left(1-\delta_i\right) \omega_i(t) \right] + \frac{\zeta_i}{N} \nabla \lambda \left( \overline{x}(t)\right) \right\} \right\}\,dt \\
    &= \int_0^T \left\{ v^i(t)\left[\left(1-\delta_i\right)\left( \omega_i(t) - x_i(t) \right)+ \frac{\delta_i}{N} \left( \overline{\omega}(t) - x_{i}(t) \right) \right] \right.\\
    &\left. \hspace{1.8cm} +  \sum_{j=1}^N y^{ji}(t)\left[ -\dot{\varphi}_{ji}(t) + \sum_{l=1}^N K_{jl}^\prime\left( \omega_l(t) - x_j(t) \right)^\intercal\varphi_{ji}(t) \right]\right\}\,dt \\
    &= \int_0^T \left\{ v^i(t)\left[\left(1-\delta_i\right)\left( \omega_i(t) - x_i(t) \right) + \frac{\delta_i}{N} \left( \overline{\omega}(t) - x_{i}(t) \right) \right] \right.\\
    &\left. \hspace{1.8cm} + \sum_{j=1}^N \varphi_{ji}(t)\left[ \dot{y}^{ji}(t) +  \sum_{l=1}^N K_{jl}^\prime\left( \omega_l(t) - x_j(t) \right)y^{ji}(t) \right]\right\}\,dt \\
    &= \int_0^T v^i(t) \left\{\left(1-\delta_i\right)\left( \omega_i(t) - x_i(t) \right)  + \frac{\delta_i}{N} \left( \overline{\omega}(t) - x_{i}(t) \right) \right. \\
    &\left.\hspace{1.8cm} + \sum_{j=1}^N K_{ji}^\prime\left( \omega_i(t) - x_j(t) \right)^\intercal \varphi_{ji}(t) \right\}\,dt.
\end{align*}
\normalsize
From this, the result promptly follows.
\end{proof}

Now, two remarks concerning the result we presented in Theorem \ref{thm:NecessaryCondnNE} are in order.

\begin{remark}
For a persuasive agent (that is, with a positive persuasion parameter $\delta_i$), the formulae \eqref{eq:FixedPoint}-\eqref{eq:DefnOfPhi} show that she adds a term proportional to the difference between her actual and the average stated judgment throughout the population. Thus, such persuasive agents are more likely to radicalize, envisioning to convince others. This effect becomes more intense the closer their persuasiveness parameter is to one. If, however, the agent is conforming, then she will add to her true judgment a term leading her in the direction of $\overline{\omega}.$ For instance, when $\delta_i = -N/(N+1),$ the first two terms within the projection of \eqref{eq:DefnOfPhi} add up to $\overline{\omega}.$ We can interpret this as if such an agent were always willing to reinforce the average judgment she perceives out of their peers, whatever her actual judgment is. The latter claim is, of course, disregarding the influence stemming from the adjoint parameters, constituting the third term in \eqref{eq:DefnOfPhi}. In fact, the remaining element in formulae \eqref{eq:FixedPoint}-\eqref{eq:DefnOfPhi} comprises effects captured by the adjoint parameters. 
\end{remark}

\begin{remark}
The adjoint parameters $\varphi_{ji},$ with $j\neq i,$ can only be non-vanishing if $\zeta_i \neq 0$ and if $\lambda$ is not constant. In this way, we see that these bring the exogenous objective information into each individual players' consideration. This occurs rather indirectly, as we would expect from \eqref{eq:ObjCriteria}. Regarding $\varphi_{ii},$ both $\zeta_i$ and $\lambda$ are still present, but there is also the influence of the tension between $x_i$ and the weighted average $\delta_i \overline{\omega} + (1-\delta_i)\omega_i.$ Moreover, we remark that $\varphi_{ii}$ appears multiplied by $K_{ii}^\prime,$ which we used to dynamically model cognitive dissonance.
\end{remark}

We now turn to sufficiency. The co-state Equations \eqref{eq:AdjointParameters} are indeed equivalent to the necessary conditions yielded by the maximum principle as found in \refcite{bacsar1998dynamic} and \refcite{bryson2018applied}; see also the recent review in \refcite{basar2018handbook}. However, standard results on sufficiency of the maximum principle require additional assumptions, as for instance, convexity. If our dynamics \eqref{eq:BasicModel} were linear in $\omega_j - x_i$ and the exogenous objective information function, $\lambda$, were convex, sufficiency  would follow from those standard results. However, in our setting, because of the nonlinear nature of the kernel $K_{ij}$, we need to resort to a more analytical approach. In particular, even with the linear dynamics, our approach is able to cater for a non-convex $\lambda$.

We argue that a natural space to seek these equilibria is in the space consisting of continuous functions with appropriately constrained action spaces, viz. according to assumption $\textbf{(B)}.$ We first prove that, for sufficiently small terminal time horizons, the mapping $\Phi$ admits a unique fixed point in the space we just described. Then, we provide two results for larger times frames. The first one just asserts existence of a possibly discontinuous Nash equilibrium, and the second one guarantees the existence of a continuous one. To prove the first, we develop a continuation method. The second one follows from the first by means of compactness arguments. Before entering in this two major theorems, we remark the following stability property for the adjoint system. We can prove it using the same techniques we referred to in Propositions \ref{prop:ExistenceAndUniqueness} and \ref{prop:ContinuityOnOmega}.

\begin{lemma} \label{lem:EstimatesForAdjSystem}
Let us write $\varphi^{\omega_i}_{ji}$ to denote the adjoint state $(i,j)$ corresponding to $\omega_i \in \mathcal{A}_i$ (the remaining $\boldsymbol{\omega}_{-i}$ being held fixed), and let us assume $T\leqslant 1.$ We have
$$
\sup_{0\leqslant t \leqslant T}|\varphi_{ji}^{\omega_i}(t)| \leqslant C \left[ \boldsymbol{1}_{i=j}\left( \| x_i\|_{L^1(0,T)} + \max_l \|\omega_l\|_{L^1(0,T)} \right) +  1 \right]
$$
and
\scriptsize
\begin{align*}
    \sup_{0\leqslant t \leqslant T}|\varphi_{ji}^{\omega_i}(t) - \varphi_{ji}^{\widetilde{\omega}_i}(t)| \leqslant C\left(1 + \sup_{0 \leqslant t \leqslant T} \left|\varphi_{ji}^{\omega_i}(t)\right|\right)\left( \max_{l} \left\| \omega_l - \widetilde{\omega}_l \right\|_{L^1(0,T)} + \max_{l} \left\| x_l - \widetilde{x}_l \right\|_{L^1(0,T)} \right).
\end{align*}
\end{lemma}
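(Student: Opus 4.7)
My plan is to derive both estimates by applying Gr\"onwall's inequality to the integral form of \eqref{eq:AdjointParameters}, leveraging the $C^2_b$ hypotheses in \textbf{(A)} and \textbf{(C)} together with the a priori bound $|x_i(t)|, |\omega_i(t)| \leqslant R$ available from assumption \textbf{(B)} and the preceding boundedness lemma. Since all relevant arguments of $a_{ij},\lambda$ and their first and second derivatives range over the compact set $[-2R, 2R]^d$, there exists a uniform constant $C$ such that $|K_{jl}^\prime|, |K_{jl}^{\prime\prime}|, |\nabla\lambda|, |\nabla^2\lambda| \leqslant C$ on all points of interest.

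For the first estimate, I integrate \eqref{eq:AdjointParameters} backwards from $T$ using the terminal condition $\varphi_{ji}(T)=0$, obtaining
\begin{align*}
    \varphi_{ji}(t) = \int_t^T \Bigl[ &-\sum_l K_{jl}^\prime(\omega_l(s)-x_j(s))^\intercal \varphi_{ji}(s) \\
    &+ \boldsymbol{1}_{i=j}\bigl\{x_i(s) - \delta_i\overline{\omega}(s) - (1-\delta_i)\omega_i(s)\bigr\} + \tfrac{\zeta_i}{N}\nabla\lambda(\overline{x}(s))\Bigr]\,ds.
\end{align*}
Taking absolute values, using $|\nabla\lambda|\leqslant C$ and $|K_{jl}'|\leqslant C$, and the elementary bound $|\overline{\omega}|\leqslant \max_l|\omega_l|$, I get
\[
|\varphi_{ji}(t)| \leqslant C\int_t^T |\varphi_{ji}(s)|\,ds + \boldsymbol{1}_{i=j}\,C\bigl(\|x_i\|_{L^1(0,T)} + \max_l\|\omega_l\|_{L^1(0,T)}\bigr) + CT.
\]
A backward Gr\"onwall argument (or equivalently, running Gr\"onwall for the reversed time $\tau := T-t$) gives the first inequality, with the factor $e^{CT}\leqslant e^C$ absorbed into $C$ using $T\leqslant 1$, so that the additive constant $CT$ yields the final $+1$ term.

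For the second estimate, let $\varphi := \varphi_{ji}^{\omega_i}$ and $\widetilde{\varphi}:=\varphi_{ji}^{\widetilde{\omega}_i}$, and analogously denote the associated trajectories by $x_j, \widetilde{x}_j$. I would write the ODE for $\varphi - \widetilde{\varphi}$ and split the nonlinear term via the identity
\[
K_{jl}^\prime(\omega_l - x_j)^\intercal\varphi - K_{jl}^\prime(\widetilde{\omega}_l-\widetilde{x}_j)^\intercal\widetilde{\varphi} = \bigl[K_{jl}^\prime(\omega_l-x_j)-K_{jl}^\prime(\widetilde{\omega}_l-\widetilde{x}_j)\bigr]^\intercal\varphi + K_{jl}^\prime(\widetilde{\omega}_l-\widetilde{x}_j)^\intercal(\varphi-\widetilde{\varphi}).
\]
Assumption \textbf{(A)} yields a Lipschitz bound $|K_{jl}^\prime(u)-K_{jl}^\prime(v)|\leqslant C|u-v|$, and \textbf{(C)} gives $|\nabla\lambda(\overline{x})-\nabla\lambda(\overline{\widetilde{x}})|\leqslant C|\overline{x}-\overline{\widetilde{x}}|\leqslant C\max_l|x_l-\widetilde{x}_l|$. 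Plugging these into the integral form and bounding the coefficient of $|\varphi(s)|$ uniformly by $\sup_s|\varphi(s)|$, I obtain
\[
|\varphi(t)-\widetilde{\varphi}(t)| \leqslant C\int_t^T|\varphi(s)-\widetilde{\varphi}(s)|\,ds + C\bigl(1+\sup_s|\varphi(s)|\bigr)\bigl(\max_l\|\omega_l-\widetilde{\omega}_l\|_{L^1} + \max_l\|x_l-\widetilde{x}_l\|_{L^1}\bigr),
\]
and a second application of Gr\"onwall, with $T\leqslant 1$, delivers the claimed bound.

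The only mildly delicate point is that the nonlinear feedback $K_{jl}^\prime(\omega_l-x_j)^\intercal\varphi$ mixes a large factor ($\varphi$, which could grow with the data) against a Lipschitz increment in the coefficient — this is precisely why the estimate carries the prefactor $\bigl(1+\sup_s|\varphi^{\omega_i}_{ji}(s)|\bigr)$. Everything else reduces to the same Caratheodory/Gr\"onwall toolkit used in Propositions \ref{prop:ExistenceAndUniqueness} and \ref{prop:ContinuityOnOmega}; the compactness furnished by \textbf{(B)} ensures the nonlinearities are Lipschitz on the relevant range, so no new analytic ingredient is required.
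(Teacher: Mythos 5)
Your proof is correct and follows exactly the route the paper intends: the paper gives no explicit argument for this lemma, stating only that it follows from ``the same techniques'' as Propositions \ref{prop:ExistenceAndUniqueness} and \ref{prop:ContinuityOnOmega}, i.e., integrating the backward adjoint equation from the terminal condition and applying Gr\"onwall, with the nonlinearities rendered uniformly Lipschitz by the confinement of $x$ and $\omega$ to $[-R,R]^d$. Your decomposition of the difference term and the resulting prefactor $\bigl(1+\sup_t|\varphi^{\omega_i}_{ji}(t)|\bigr)$ match the structure of the stated estimate, so nothing is missing.
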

\normalsize

We are ready to provide our local-in-time existence result.

\begin{theorem} \label{thm:LocalResult}
If $ \frac{|\delta_i|}{|1-\delta_i|N} < 1 $ and $\max_i\left|x_{0,i}\right|\leqslant R,$ then there exists $\tau_0 = \tau_0\left( N,\, K_{ij},\,\delta_i,\, R \right)>0$ for which $T\leqslant \tau_0$ implies that the mapping $\Phi$ has a unique fixed point $\boldsymbol{\omega}^* \in C\left( \left[0,T\right] \right)^N \cap \mathcal{A}_{T}.$ Moreover, $\boldsymbol{\omega}^*$ is a Nash equilibrium.
\end{theorem}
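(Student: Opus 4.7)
My plan is to establish Theorem \ref{thm:LocalResult} via a contraction mapping argument in a suitable complete metric space, followed by a second-order sufficiency argument. I would work in $X_T := C([0,T],\mathbb{R}^{dN}) \cap \mathcal{A}_T$ equipped with the product sup norm $\|\boldsymbol{\omega}\|_\infty := \max_i \sup_{0\leqslant t\leqslant T} |\omega_i(t)|$; closedness of the $A_i$ in $(\textbf{B})$ makes $X_T$ a closed subset of $C([0,T],\mathbb{R}^{dN})$, hence complete. To verify $\Phi(X_T)\subseteq X_T$, I note that $P_{\mathcal{A}_i}$ automatically takes values in $\mathcal{A}_i$, while continuity of $\Phi[\boldsymbol{\omega}]_i$ in $t$ follows from continuity of $\boldsymbol{x}$ (Proposition \ref{prop:ExistenceAndUniqueness}), continuity of the $\varphi_{ji}$ via standard backward ODE theory applied to \eqref{eq:AdjointParameters}, continuity of $K'_{ji}$ from $(\textbf{A})$, and the $1$-Lipschitz property of the metric projection onto a closed convex subset of $\mathbb{R}^d$.

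For the contraction estimate, given $\boldsymbol{\omega}, \widetilde{\boldsymbol{\omega}} \in X_T$ with associated states $\boldsymbol{x}, \widetilde{\boldsymbol{x}}$ and adjoints $\varphi_{ji}, \widetilde{\varphi}_{ji}$, the $1$-Lipschitz character of $P_{\mathcal{A}_i}$ reduces the task to controlling the three summands inside \eqref{eq:DefnOfPhi}. The term $x_i - \widetilde x_i$ is $O(T)\|\boldsymbol{\omega}-\widetilde{\boldsymbol{\omega}}\|_\infty$ by Proposition \ref{prop:ContinuityOnOmega}, after using $\|\omega_j - \widetilde\omega_j\|_{L^1(0,T)} \leqslant T\|\boldsymbol{\omega}-\widetilde{\boldsymbol{\omega}}\|_\infty$. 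The term $\frac{|\delta_i|}{(1-\delta_i)N}\bigl|\overline{\omega}-\widetilde{\overline{\omega}}-(x_i-\widetilde x_i)\bigr|$ contributes at most $\bigl(\frac{|\delta_i|}{(1-\delta_i)N}+O(T)\bigr)\|\boldsymbol{\omega}-\widetilde{\boldsymbol{\omega}}\|_\infty$. The adjoint summand $\frac{1}{1-\delta_i}\sum_j K'_{ji}(\omega_i-x_j)^\intercal\varphi_{ji}$ contributes $O(T)\|\boldsymbol{\omega}-\widetilde{\boldsymbol{\omega}}\|_\infty$, using $C^1$-smoothness of $K_{ji}$, uniform boundedness of the $\varphi_{ji}$ from the first estimate of Lemma \ref{lem:EstimatesForAdjSystem}, and the second (difference) estimate of the same lemma. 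Summing, $\|\Phi[\boldsymbol{\omega}] - \Phi[\widetilde{\boldsymbol{\omega}}]\|_\infty \leqslant (\alpha + CT)\|\boldsymbol{\omega}-\widetilde{\boldsymbol{\omega}}\|_\infty$, with $\alpha := \max_i \frac{|\delta_i|}{(1-\delta_i)N} < 1$ by hypothesis. Choosing $\tau_0 \leqslant 1$ (so that Lemma \ref{lem:EstimatesForAdjSystem} applies) with $\alpha + C\tau_0 < 1$ yields a strict contraction on $X_T$, and Banach's fixed point theorem delivers the unique $\boldsymbol{\omega}^* \in X_T$.

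It remains to argue that this fixed point is a Nash equilibrium, not merely a solution of the first-order conditions of Theorem \ref{thm:NecessaryCondnNE}. Fix $i$ and $\boldsymbol{\omega}_{-i}^*$, and consider $\omega_i \mapsto J_i(\omega_i;\boldsymbol{\omega}_{-i}^*)$ on the closed convex set $\mathcal{A}_i$. I would show this map is strictly convex for $T\leqslant \tau_0$, after possibly reducing $\tau_0$. Computing the second G\^ateaux derivative $\langle D_i^2 J_i h, h\rangle$, one obtains a leading coercive term $(1-\delta_i)\int_0^T |h(t)|^2\,dt$ coming from the cognitive-dissonance quadratic in \eqref{eq:ObjCriteria}, plus a remainder collecting second-order sensitivities of $x_i$ with respect to $\omega_i$ (via the variational equation \eqref{eq:ODEforY} and its derivative) together with contributions from $\nabla^2\lambda(\overline{x})$. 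By $(\textbf{A})$, $(\textbf{B})$, $(\textbf{C})$ and the $L^\infty$ confinement of $\boldsymbol{x}$, this remainder is $O(T)\|h\|_{L^2}^2$ uniformly in $\omega_i \in \mathcal{A}_i$. Hence for $T$ sufficiently small, $\langle D_i^2 J_i h, h\rangle \geqslant c\|h\|_{L^2}^2$ with $c>0$; the critical point $\omega_i^*$ supplied by the fixed-point equation is then automatically the unique global minimizer on $\mathcal{A}_i$, and $\boldsymbol{\omega}^*$ is a Nash equilibrium.

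The main obstacle is this sufficiency step. The contraction calculation is largely bookkeeping supported by Proposition \ref{prop:ContinuityOnOmega} and Lemma \ref{lem:EstimatesForAdjSystem}; the delicate point, flagged by the authors' remark that standard maximum-principle sufficiency requires convexity, is controlling the second variation when $\lambda$ may be non-convex and $x_i$ depends nonlinearly on $\omega_i$. The resolution is that the strictly positive quadratic with coefficient $(1-\delta_i)$ dominates all indefinite contributions for small $T$, which is precisely what ties existence (via contraction of $\Phi$) and sufficiency (via local convexity of $J_i$) under a single threshold $\tau_0$.
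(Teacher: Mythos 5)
Your proposal is correct and follows essentially the same route as the paper: a Banach fixed-point argument on $C([0,T])^N\cap\mathcal{A}_T$ using the $1$-Lipschitz projection, Proposition \ref{prop:ContinuityOnOmega} and Lemma \ref{lem:EstimatesForAdjSystem} to get the contraction factor $\max_i\frac{|\delta_i|}{|1-\delta_i|N}+CT<1$, followed by a second-variation estimate showing the coercive quadratic term dominates all $O(T)$ remainders so that $\omega_i\mapsto J_i(\omega_i;\boldsymbol{\omega}_{-i}^*)$ is strictly convex for small $T$. The only cosmetic difference is the exact leading coefficient in the coercivity bound (the paper tracks $1-\delta_i+\delta_i/N^2$ rather than $1-\delta_i$), which does not affect the argument.
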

\begin{proof}
Let us consider $\boldsymbol{\omega},\widetilde{\boldsymbol{\omega}} \in C\left( \left[0,T\right]\right).$ According to the estimates in Lemma \ref{lem:EstimatesForAdjSystem}, for $T\leqslant 1,$ we derive
$$
\|\Phi[\boldsymbol{\omega}] - \Phi[\widetilde{\boldsymbol{\omega}}] \|_\infty \leqslant \left[ CT +\frac{\delta_i}{(1-\delta_i)N}\right]\|\boldsymbol{\omega} - \widetilde{\boldsymbol{\omega}}\|_\infty,
$$
where $C$ is independent of $T.$ Thus, as long as $|\delta_i|< N|1-\delta_i| ,$ we can choose $T$ small enough so as to make $\Phi$ a contraction. Since the image of $\Phi$ is contained in $\mathcal{A}_T,$ any fixed point necessarily belongs to this space, thus being admissible.

Let us show that, possibly making $T$ smaller, this unique fixed point is in fact a Nash equilibrium. We consider second-order conditions:

\footnotesize
\begin{align} \label{eq:SecOrd1}
  \begin{split}
    \left\langle D_i^2 J_i( \omega_i; \boldsymbol{\omega}_{-i}), (v^i,\,v^i) \right\rangle =& \int_0^T v^i(t) \cdot \left\{ (1-\delta_i)(v^i(t) - y_i(t)) + \frac{\delta_i}{N}\left( \frac{v^i(t)}{N} - y_i(t) \right) \right.\\
    &\,+ \sum_{j=1}^N \left[ \left( \boldsymbol{1}_{i=j}v^i(t) - y_{ji}(t) \right)^\intercal K_{ji}^{\prime\prime}(\omega_j(t) - x_j(t))^\intercal \varphi_{ji}(t) \right.\\
    &\left.\left.\, + K_{ji}^\prime(\omega_j(t) - x_i(t))^\intercal \psi_{ji}(t) \right] \right\}\,dt ,
  \end{split}
\end{align}
\normalsize
where we described $y_{ij}$ in \eqref{eq:ODEforY}, and $\psi_{ji} := \left\langle D_i \varphi_{ji}, v^i\right\rangle$ solves
\footnotesize
\begin{equation} \label{eq:DerivAdjointParameters}
    \begin{cases}
        -\dot{\psi}_{ji}(t) = - \sum_{l=1}^N K_{jl}^\prime\left(  \omega_l(t) - x_j(t)\right)^\intercal\psi_{ji}(t) + \boldsymbol{1}_{i=j}\left\{ y_{ji}(t) -\left[ \frac{\delta_i}{N} + \left(1-\delta_i\right) \right] v_i(t) \right\}  \\
        \hspace{1.2cm} - \sum_{l=1}^N \left(v_i(t) \boldsymbol{1}_{l=i} - y_{ji}(t) \right)^\intercal K_{jl}^{\prime\prime}\left( \omega_l(t) - x_j(t) \right)^\intercal \varphi_{ji}(t) \\
        \hspace{1.2cm} + \frac{\zeta_i}{N^2} D^2\lambda \left(\overline{x}(t)\right) \cdot \sum_{l=1}^N y_{j,l}(t),\, 0 < t < T,\\
        \psi_{ji}(T) = 0.
    \end{cases}
\end{equation}
\normalsize
As in Lemma \ref{lem:EstimatesForAdjSystem}, if $T\leqslant 1,$ then we can prove that
$$
|y_{ij}(t)|\leqslant CT\|v^i\|_{L^1(0,T)} \leqslant CT\|v^i\|_{L^2(0,T)},
$$
and likewise
$$
|\psi_{ij}(t)|\leqslant CT\|v^i\|_{L^2(0,T)}.
$$
Thus, from \eqref{eq:SecOrd1}, we derive
\small
\begin{align*}
    \left\langle D_i^2 J_i( \omega_i; \boldsymbol{\omega}^{-i}), (v^i,\,v^i) \right\rangle \geqslant& \left(1-\delta_i +\frac{\delta_i}{N^2}\right)\int_0^T |v^i(t)|^2 \,dt \\
    &- C\int_0^T \left\{ |v^i(t)||y_i(t)| + |v^i(t)|^2 \sum_{j=1}^N |\varphi_{ji}(t)|  \right.\\
    &\left.\hspace{1.25cm}+ |v^i(t)|\sum_{j=1}^N |y_{ji}(t)| |\varphi_{ji}(t)|+ |v^i(t)|\sum_{j=1}^N |\psi_{ji}(t)| \right\}\,dt \\
    \geqslant& \left( 1- \delta_i + \frac{\delta_i}{N^2} - CT \right) \int_0^T |v^i(t)|^2 dt. 
\end{align*}
\normalsize
For small enough $T>0,$ the functional $\omega_i \mapsto J_i(\omega_i, \boldsymbol{\omega}_{-i}^*)$ is strictly convex, whence it follows that the critical point $\omega_i^*$ is a minimum. 
\end{proof}

Now, we extend local-in-time existence to its global-in-time analog.

\begin{theorem} \label{prop:Continuation}
Let us suppose that the assumptions of Theorem \ref{thm:LocalResult} hold. Given $T>0,$ there exist Nash equilibria in $\mathcal{A}_T.$
\end{theorem}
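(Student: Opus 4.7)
My plan is a continuation argument in the horizon $T$ built on top of Theorem \ref{thm:LocalResult}. The decisive structural fact is that the contraction radius $\tau_0$ depends only on $N$, the kernels, the $\delta_i$, and the a priori bound $R$ from assumption $\textbf{(B)}$; by the invariance $|x_i(t)|\leqslant R$ established in the previous lemma, $\tau_0$ is in particular independent of the entering state, so the local solver can be applied uniformly along $[0,T]$.

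First, I would extend Theorem \ref{thm:LocalResult} to accommodate non-vanishing terminal data $\varphi_{ji}(s_{k+1})=\psi$ for the adjoint system \eqref{eq:AdjointParameters}. The estimates of Lemma \ref{lem:EstimatesForAdjSystem} are unchanged up to an additive term proportional to $|\psi|$, the induced perturbation of the operator $\Phi$ is Lipschitz in $\psi$, and the contraction step of Theorem \ref{thm:LocalResult} still produces a unique local fixed point on any interval of length at most some $\tau_0'\leqslant\tau_0$, uniformly over bounded $\psi$.

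Given $T>0$, I would then partition $[0,T]=\bigcup_{k=0}^{K-1}[s_k,s_{k+1}]$ with mesh at most $\tau_0'$ and build the profile recursively from right to left: on $[s_{K-1},T]$ apply Theorem \ref{thm:LocalResult} directly (since $\varphi_{ji}(T)=0$); on each earlier $[s_k,s_{k+1}]$ apply the extended local result with $\psi$ equal to $\varphi(s_{k+1})$ produced by the block immediately to its right. The resulting profile $\omega^*$ is, by construction, a fixed point of $\Phi_T$ on the whole $[0,T]$, and the corresponding state and adjoint satisfy \eqref{eq:BasicModel} and \eqref{eq:AdjointParameters} globally. Because $\omega^*$ is a concatenation across subintervals, it may exhibit jumps at the interfaces $s_k$, explaining the qualifier \emph{possibly discontinuous} in the statement.

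The hard part will be upgrading this fixed point of $\Phi_T$ to a genuine Nash equilibrium. The sufficiency argument in the proof of Theorem \ref{thm:LocalResult} produces strict convexity of $\omega_i\mapsto J_i(\omega_i;\boldsymbol{\omega}^*_{-i})$ only for small $T$, so globally one must apply it piecewise on each $[s_k,s_{k+1}]$ and control the cross-subinterval propagation of unilateral deviations through \eqref{eq:BasicModel}. The globally defined adjoint variables obtained in the construction are precisely what encodes this propagation, but combining the local strict convexities into full-horizon optimality for any $\omega_i\in\mathcal{A}_i$ is the most delicate accounting in the argument.
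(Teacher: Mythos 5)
Your plan diverges from the paper's in a way that introduces a genuine gap. The system you are trying to solve is forward--backward: the state \eqref{eq:BasicModel} carries an \emph{initial} condition at $t=0$, while the adjoint \eqref{eq:AdjointParameters} carries a \emph{terminal} condition at $t=T$. A right-to-left sweep is therefore not well-defined as described: to apply Theorem \ref{thm:LocalResult} (or your extension of it) on the rightmost block $\left[s_{K-1},T\right]$ you need the state value $x(s_{K-1})$, but that value is determined by the dynamics on $\left[0,s_{K-1}\right]$, which you have not yet solved --- and cannot solve, since those blocks in turn need the adjoint data propagated from the right. Each block needs state data from its left neighbour and adjoint data from its right neighbour, so no one-directional sweep closes the construction; you would need an additional decoupling device (a shooting/fixed-point argument over the interface values, or a continuation-in-parameter method for forward--backward systems), none of which you supply. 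The paper avoids this entirely by sweeping \emph{forward}: on each block $\left[(j-1)\tau,j\tau\right]$ it solves a fresh local Nash problem with the adjoint reset to zero at the block's own right endpoint, taking as initial state the terminal state of the previous block. The price is that the concatenated profile is not a fixed point of the global $\Phi_T$ (the adjoints are not the global adjoints), and the discontinuities at the interfaces come precisely from these adjoint resets --- not, as you suggest, from a construction that carries the adjoint continuously across interfaces, which would if anything tend to produce a continuous profile.

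The second gap is the one you yourself flag and leave open: upgrading a fixed point of $\Phi_T$ to a Nash equilibrium. This does not follow from the second-order estimate in Theorem \ref{thm:LocalResult}, whose coercivity constant $1-\delta_i+\delta_i/N^2-CT$ is only positive for small $T$; for large $T$ a critical point of $\omega_i\mapsto J_i(\omega_i;\boldsymbol{\omega}^*_{-i})$ need not be a minimizer, so your route genuinely stalls here. The paper's forward concatenation sidesteps this: it never invokes global convexity, but instead decomposes the cost $J_i^{x_0,T}$ additively over the subintervals and sums the local Nash inequalities, each of which is already established by the small-time sufficiency argument. If you want to salvage your approach, you would need both a bona fide existence theory for the coupled two-point boundary value problem on $\left[0,T\right]$ and a separate verification of optimality; the paper's construction delivers existence of a (possibly discontinuous) equilibrium with neither.
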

\begin{proof}
Let us take $\tau > 0$ sufficiently small, in accordance to Theorem \ref{thm:LocalResult}. That is, we take $\tau$ in such a way that, for every terminal time horizon $\tau^\prime$ less than or equal to $\tau,$ there exists a fixed point $\boldsymbol{\omega} \in C\left( \left[0,\tau^\prime\right] \right)^N$ of $\Phi$ which is a continuous Nash equilibrium. We let $\boldsymbol{\omega}^{1,\tau} \in C\left( \left[0,\tau\right] \right)^N$ be the Nash equilibrium on $\left[0,\tau\right]$ corresponding to the initial datum $x_{0,1},\ldots,x_{0,N},$ and we denote by $x^{1,\tau}$ the corresponding state. Then, we consider the Nash equilibrium $\boldsymbol{\omega}^{2,\tau}$ on $\left[0,\tau\right]$ corresponding to the initial datum $x^{1,\tau}_1(\tau),\ldots,x^{1,\tau}_N(\tau),$ denoting by $x^{2,\tau}$ its corresponding state. Proceeding inductively, we build the sequence $\left\{ \boldsymbol{\omega}^{j,\tau} \right\}_{j=1}^M,$ for large enough positive integers $M,$ with the following properties:
\begin{itemize}
    \item $\tau = T/M;$
    \item If $x^{j,\tau}$ is the state corresponding to $\boldsymbol{\omega}^{j,\tau}$ on $\left[0,\tau\right],$ then $\boldsymbol{\omega}^{j+1,\tau}$ is the continuous Nash equilibrium on this interval (the unique fixed point of $\Phi$ there) corresponding to the initial datum $x^{j,\tau}_1(\tau),\ldots,x^{j,\tau}_N(\tau).$
\end{itemize}
Let us denote by $\boldsymbol{\omega}^{*,\tau}$ the strategy satisfying
$$
\boldsymbol{\omega}^{*,\tau}(t) := \boldsymbol{\omega}^{j,\tau}\left( t - (j-1)\tau \right) \hspace{1.0cm} \left(j \in \left\{ 1,\ldots,M \right\},\, t \in \left[(j-1)\tau,j\tau\right[\right).
$$
It is clear that $\boldsymbol{\omega}^{*,\tau}|_{\left[0,T\right]} \in \Pi_{i=1}^N \mathcal{A}_i.$ We proceed to check that it is a Nash equilibrium for our problem. In effect, for each $s \in \left[0,T\right]$ and $\overline{x}_0 \in \left[-R,R\right]^d,$ let us denote by 
$$
\widetilde{J}_i^{\overline{x}_0,s}(\omega_i;\boldsymbol{\omega}_{-i}) := \int_0^s \left[\delta_i \left( \overline{\omega}(t) - x_i(t) \right)^2 + (1-\delta_i)\left( \omega_i(t) - x_i(t) \right)^2 + \lambda_i( \overline{x}(t)) \right]\,dt,
$$
where the superscript $\overline{x}_0$ in $J_i^{\overline{x}_0,s}$ means that $\overline{x}_0$ is the initial condition for the judgments $x.$ Moreover, for each $t \in \left[0, \tau \right],$ $i\in \mathcal{N},$ $j \in \left\{ 1,\ldots,M \right\}$ and $\omega_i \in \mathcal{A}_i,$ let us write:
$$
\begin{cases}
\widetilde{\omega}_i^{j}(t) = \omega_i(t+(j-1)\tau), \\
\widetilde{\omega}^{*,\tau,j}_i(t):= \omega^{*,\tau}_i(t+(j-1)\tau), \\
\overline{x}^{*,j}_0 = \left\{ x^{\tau}_k\left( (j-1)\tau \right) \right\}_{k=1}^N.
\end{cases}
$$
Under these notations, we derive
\begin{align*}
    J_i^{x_0,T}\left( \omega_i^{*,\tau};\boldsymbol{\omega}_{-i}^{*,\tau} \right) &= \sum_{j=1}^{M } \widetilde{J}_i^{\overline{x}^{*,j}_0,\tau}(\widetilde{\omega}_i^{*,j};\widetilde{\boldsymbol{\omega}}_{-i}^{j,*,\tau}) \\
    &\leqslant \sum_{j=1}^{M } \widetilde{J}_i^{\overline{x}^{*,j}_0,\tau}(\widetilde{\omega}_i^{j};\widetilde{\boldsymbol{\omega}}_{-i}^{j,*,\tau}) \\
    &= J_i^{x_0,T}\left( \omega_i; \boldsymbol{\omega}_{-i}^{*,\tau} \right).
\end{align*}
This proves that $\boldsymbol{\omega}^{*,\tau}$ is a Nash equilibrium.
\end{proof}

We notice that the strategy $\boldsymbol{\omega}^{*,\tau}$ we have built in the proof of Proposition \ref{prop:Continuation} is not necessarily continuous. In effect, even the choice of the continuation step $\tau$ is arbitrary, and is likely to lead to distinct competitive equilibria. However, we will argue that, upon letting $\tau \downarrow 0,$ there exists a limiting continuous profile. This is the content of our next result.

\begin{theorem} \label{thm:InfinitesimalContinuation}
Let us consider the assumptions of Theorem \ref{thm:LocalResult} as valid. Then, for each $T>0,$ there exist continuous Nash equilibria $\boldsymbol{\omega}^* \in \mathcal{A}_T.$ 
\end{theorem}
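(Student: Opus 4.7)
The plan is to pass to the limit as $\tau \downarrow 0$ in the family of (possibly discontinuous) Nash equilibria $\boldsymbol{\omega}^{*,\tau}$ constructed in Proposition \ref{prop:Continuation}, exploiting the fact that the local adjoints used on each subinterval of the continuation construction are $O(\tau)$. Set $\tau_n := T/n$, $\boldsymbol{\omega}^{*,n} := \boldsymbol{\omega}^{*,\tau_n}$, and let $\boldsymbol{x}^{*,n}$ denote the associated state. The a priori bounds $\|\boldsymbol{\omega}^{*,n}\|_\infty, \|\boldsymbol{x}^{*,n}\|_\infty \leqslant R$ are uniform in $n$ (by assumption \textbf{(B)} and the invariant-set lemma ending Section \ref{sec:model}); assumption \textbf{(A)} together with \eqref{eq:BasicModel} then yields a uniform Lipschitz bound on $\boldsymbol{x}^{*,n}$, so Arzelà--Ascoli extracts a (non-relabelled) subsequence along which $\boldsymbol{x}^{*,n} \to \boldsymbol{x}^*$ uniformly on $[0,T]$.

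The crux is the approximate fixed-point identity
\[
\bigl\| \omega^{*,n}_i - G_i[\boldsymbol{x}^{*,n}, \boldsymbol{\omega}^{*,n}] \bigr\|_\infty \leqslant C \tau_n,
\qquad
G_i[\boldsymbol{x}, \boldsymbol{\omega}] := P_{\mathcal{A}_i}\!\left( x_i - \tfrac{\delta_i}{(1-\delta_i)N}\bigl( \overline{\omega} - x_i \bigr) \right),
\]
with $C$ independent of $n$. To prove this, recall that on each subinterval $[(j-1)\tau_n, j\tau_n]$ the restriction of $\boldsymbol{\omega}^{*,n}$ is the local Nash equilibrium from Theorem \ref{thm:LocalResult}, so it satisfies \eqref{eq:FixedPoint}--\eqref{eq:DefnOfPhi} with a local adjoint $\varphi^{j,n}_{ji}$ subject to the terminal condition $\varphi^{j,n}_{ji}(j\tau_n) = 0$. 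Since the source in \eqref{eq:AdjointParameters} is uniformly bounded by assumptions \textbf{(A)}, \textbf{(C)} and the a priori bounds, backward Gronwall on an interval of length $\tau_n$ gives $\sup_t |\varphi^{j,n}_{ji}(t)| \leqslant C\tau_n$; the 1-Lipschitz property of $P_{\mathcal{A}_i}$ then furnishes the displayed estimate. Under the hypothesis $\max_i \tfrac{|\delta_i|}{|1-\delta_i|N} < 1$, the map $\boldsymbol{\omega} \mapsto G[\boldsymbol{x}, \boldsymbol{\omega}]$ is a contraction with constant independent of $\boldsymbol{x} \in [-R,R]^{dN}$; Banach's theorem produces a Lipschitz selection $\boldsymbol{x} \mapsto \bar{\boldsymbol{\omega}}(\boldsymbol{x})$ of fixed points, and combining with the estimate above I deduce $\|\boldsymbol{\omega}^{*,n} - \bar{\boldsymbol{\omega}}(\boldsymbol{x}^{*,n})\|_\infty = O(\tau_n)$.

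Since $\boldsymbol{x}^{*,n} \to \boldsymbol{x}^*$ uniformly and $\bar{\boldsymbol{\omega}}$ is continuous, $\boldsymbol{\omega}^{*,n} \to \boldsymbol{\omega}^* := \bar{\boldsymbol{\omega}}(\boldsymbol{x}^*)$ uniformly on $[0,T]$, which shows $\boldsymbol{\omega}^*$ is continuous. Proposition \ref{prop:ContinuityOnOmega} identifies $\boldsymbol{x}^*$ as the state driven by $\boldsymbol{\omega}^*$ and lets me pass to the limit in the Nash inequalities $J_i(\omega^{*,n}_i; \boldsymbol{\omega}_{-i}^{*,n}) \leqslant J_i(\omega_i; \boldsymbol{\omega}_{-i}^{*,n})$, since the integrand in \eqref{eq:ObjCriteria} is continuous in $(\boldsymbol{\omega}, \boldsymbol{x})$ and $\boldsymbol{x}$ depends continuously on $\boldsymbol{\omega}$ in $L^2$. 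The main obstacle I anticipate is the uniform $O(\tau_n)$ estimate for the local adjoints: while it reduces to a backward Gronwall application, one must track carefully that the constants arising from the kernels $K_{ij}$, the gradient $\nabla\lambda$, and the a priori bounds on $(\boldsymbol{\omega}^{*,n}, \boldsymbol{x}^{*,n})$ are independent of both $n$ and the subinterval index $j$.
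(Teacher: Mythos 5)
Your argument is correct, and it reaches the conclusion by a genuinely different route than the paper. The paper works directly on the controls: it replaces the concatenated equilibrium $\boldsymbol{\omega}^{*,\tau}$ by the polygonal interpolant $\boldsymbol{\gamma}^\tau$ through the nodes $\boldsymbol{\omega}^{*,\tau}(j\tau)$, proves $\|\boldsymbol{\gamma}^\tau-\boldsymbol{\omega}^{*,\tau}\|_\infty=O(\tau)$ and a uniform slope bound (hence equicontinuity of the interpolants), shows $\boldsymbol{\gamma}^\tau$ is an $O(\tau)$-approximate Nash equilibrium, and extracts the continuous limit by Arzel\`a--Ascoli applied to $\{\boldsymbol{\gamma}^\tau\}$. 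You instead apply Arzel\`a--Ascoli only to the states, where equicontinuity is free from \eqref{eq:BasicModel} and the invariance lemma, and then identify the limiting control explicitly: since each local adjoint has zero terminal data on an interval of length $\tau_n$ and a uniformly bounded source, it is $O(\tau_n)$, so $\boldsymbol{\omega}^{*,n}$ is an $O(\tau_n)$-approximate fixed point of the adjoint-free reduced map $G[\boldsymbol{x}^{*,n},\cdot]$, whose contraction property (under $\max_i\frac{|\delta_i|}{|1-\delta_i|N}<1$) forces convergence to $\bar{\boldsymbol{\omega}}(\boldsymbol{x}^*)$. The ingredients are the same ones the paper uses implicitly --- the vanishing of $\varphi^{j,\tau}_{li}$ at subinterval endpoints is exactly what drives the paper's estimate \eqref{eq:PolygonalComparison2Step2} --- but your packaging buys something the paper does not state: the limiting equilibrium is a continuous pointwise feedback of the state, $\omega_i^*(t)=P_{A_i}\bigl(x_i^*(t)-\frac{\delta_i}{(1-\delta_i)N}(\overline{\omega}^*(t)-x_i^*(t))\bigr)$, uniquely determined by the subsequential state limit, and you never need equicontinuity of the controls themselves. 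The passage to the limit in the Nash inequalities via Proposition \ref{prop:ContinuityOnOmega} plays the role of the paper's Step 2 and is handled correctly. The one point to keep explicit is the one you already flag: the $O(\tau_n)$ adjoint bound must be uniform over the subinterval index $j$, which holds because the source in \eqref{eq:AdjointParameters} is controlled by the $\tau$-independent bounds $|x_i|,|\omega_i|\leqslant R$ and $\lambda\in C^2_b$.
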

\begin{proof}
For the sake of clarity, we divide this proof in three steps. Firstly, we build a continuous approximation of the strategy $\boldsymbol{\omega}^*$ we constructed in the proof of Proposition \ref{prop:Continuation}. Secondly, we argue that the curve we defined is an approximate Nash equilibrium. Thirdly, we conclude the result via a compactness argument. 

Prior to step one, we fix the following notations. The set $\mathcal{T}_T$ comprises those $\tau > 0$ for which $ T / \tau$ is a large enough positive integer (say $T/\tau \geqslant M_0$) in such a way that, for each $\tau \in \mathcal{T}_T,$ the strategy $\boldsymbol{\omega}^{*,\tau}\in \mathcal{A}_T$ we constructed in the proof of Proposition \ref{prop:Continuation} is a Nash equilibrium. We are now ready to proceed with the present proof. 

\textbf{Step 1.} Construction of an approximate polygonal path.

Let us define $\boldsymbol{\gamma}^{\tau}$ as the polygonal path connecting the points $\boldsymbol{\omega}^{*,\tau}(j\tau),$ for $j\in\left\{0,...,M\right\}.$ We claim that $\left| \gamma^\tau - \boldsymbol{\omega}^{*,\tau} \right| = O(\tau).$ In effect, let us take $t \in \left[(j-1)\tau,j\tau\right[.$ Then, upon writing $s= t-(j-1)\tau,$ we have
\scriptsize
\begin{align*}
    | &\omega_i^{*,\tau}(t) - \omega_i^{*,\tau}((j-1)\tau)| = \left| \omega_i^{j,\tau}(s) - \omega_i^{j,\tau}(0) \right| \\
    =& \Bigg| P_{\mathcal{A}_i}\left( x_i^{j,\tau} - \frac{\delta_i}{(1-\delta_i)N}\left(\overline{\omega}^{j,\tau} - x^{j,\tau}_i \right) - \frac{1}{1-\delta_i}\sum_{l=1}^N K_{li}(\omega^{j,\tau}_l - x^{j,\tau}_l )\varphi^{j,\tau}_{l,i} \right)(s) \\
    &\,- \, P_{\mathcal{A}_i}\left( x^{j,\tau}_i - \frac{\delta_i}{(1-\delta_i)N}\left(\overline{\omega}^{j,\tau} - x^{j,\tau}_i \right) - \frac{1}{1-\delta_i}\sum_{l=1}^N K_{li}(\omega^{j,\tau}_l - x^{j,\tau}_l )\varphi^{j,\tau}_{l,i} \right)(0) \Bigg| \\
    \leqslant&\, C\left( \left| x^{j,\tau}_i(s) - x^{j,\tau}_i(0) \right| + \sum_{l=1}^N\left|K_{li}^\prime(\omega_l^{j,\tau}(s) - x_l^{j,\tau}(s))\varphi_{li}^{j,\tau}(s)  - K_{li}^\prime(\omega_l^{j,\tau}(0) - x_l^{j,\tau}(0))\varphi_{li}^{j,\tau}(0)\right|\right) \\
    &+ \max_{i}\left[ \frac{\delta_i}{(1-\delta_i)N} \right] \max_{i}\left| \omega^{j,\tau}_i(s) - \omega^{j,\tau}_i(0) \right| \\
    \leqslant&\, Cs + \max_{i}\left[ \frac{\delta_i}{(1-\delta_i)N} \right] \max_i\left| \omega_i^{*,\tau}(t) - \omega_i^{*,\tau}((j-1)\tau) \right|,
\end{align*}
\normalsize
whence
\begin{equation} \label{eq:PolygonalComparion1}
    \max_i | \omega_i^{*,\tau}(t) - \omega_i^{*,\tau}((j-1)\tau)|\leqslant C\tau \hspace{1.0cm} \left( t \in \left[(j-1)\tau, j\tau \right[ \right). 
\end{equation}
Next, for $t \in \left[(j-1)\tau,\,j\tau \right[,$ we estimate
\small
\begin{align} \label{eq:PolygonalComparison2Step1}
    \begin{split}
        | \omega_i^{*,\tau}(t) - \omega_i^{*,\tau}(j\tau)| \leqslant&\, |\omega_i^{*,\tau}(t) - \omega_i^{*,\tau}((j-1)\tau)| + | \omega_i^{*,\tau}((j-1)\tau)-\omega_i^{*,\tau}(j\tau-)|\\
        &\,+ |\omega_i^{*,\tau}(j\tau-) - \omega_i^{*,\tau}(j\tau)| \\
        \leqslant&\, C\tau + |\omega_i^{*,\tau}(j\tau-) - \omega_i^{*,\tau}(j\tau)|.
    \end{split}
\end{align}
\normalsize
Since $x^{j,\tau}_i(\tau) = x^{j+1,\tau}(0)$ and $\varphi_{li}^{j,\tau}(\tau) = 0,\,l\in\mathcal{N},$ we obtain
\scriptsize
\begin{align} \label{eq:PolygonalComparison2Step2}
    \begin{split}
        &|\omega_i^{*,\tau}(j\tau-) - \omega_i^{*,\tau}(j\tau)| = |\omega_i^{j,\tau}(\tau) - \omega_i^{j+1,\tau}(0)| \\
    &\leqslant\, \Bigg| P_{\mathcal{A}_i}\left( x_i^{j,\tau} - \frac{\delta_i\left(\overline{\omega}^{j,\tau} - x^{j,\tau}_i  \right)}{(1-\delta_i)N} - \frac{1}{1-\delta_i}\sum_{l=1}^N K_{li}(\omega^{j,\tau}_l - x^{j,\tau}_l )\varphi^{j,\tau}_{l,i} \right)(\tau) \\
    &\,\hspace{0.5cm}- \, P_{\mathcal{A}_i}\left( x^{j+1,\tau}_i - \frac{\delta_i}{(1-\delta_i)N}\left(\overline{\omega}^{j+1,\tau} - x^{j+1,\tau}_i \right) \right.\\
    &\left.\,\hspace{1.6cm}- \frac{1}{1-\delta_i}\sum_{l=1}^N K_{li}(\omega^{j+1,\tau}_l - x^{j+1,\tau}_l )\varphi^{j+1,\tau}_{l,i} \right)(0) \Bigg| \\
    &\leqslant \max_i \left[ \frac{\delta_i}{(1-\delta_i)N} \right] \max_i \left| \omega_i^{j,\tau}(\tau) - \omega_i^{j+1,\tau}(0) \right| \\
    &\hspace{0.5cm}+ \left| \frac{1}{1-\delta_i}\sum_{l=1}^N K_{li}(\omega^{j+1,\tau}_{li}(0) - x^{j+1,\tau}_{li}(0))\varphi^{j+1,\tau}_{li}(0) \right| \\
    &\leqslant \max_i \left[ \frac{\delta_i}{(1-\delta_i)N} \right] \max_i \left| \omega_i^{*,\tau}(j\tau -) - \omega_i^{j+1,\tau}(j\tau) \right| + C\tau.
    \end{split}
\end{align}
\normalsize
Putting \eqref{eq:PolygonalComparison2Step1} and \eqref{eq:PolygonalComparison2Step2} together, we derive
\begin{equation} \label{eq:PolygonalComparison2}
    \max_i \left| \omega_i^{*,\tau}(t) - \omega_i^{*,\tau}(j\tau) \right| \leqslant C\tau \hspace{1.0cm} \left( t \in \left[(j-1)\tau, j\tau \right[ \right).
\end{equation}
From estimates \eqref{eq:PolygonalComparion1} and \eqref{eq:PolygonalComparison2}, we deduce that
\begin{equation} \label{eq:Approximation_Polygonal}
    \left| \boldsymbol{\omega}^{*,\tau} - \gamma^\tau \right| = O(\tau).
\end{equation}

\textbf{Step 2.} The polygonal path $\gamma^\tau$ is an approximate Nash equilibrium.

From the stability estimate \eqref{prop:ContinuityOnOmega} and \eqref{eq:Approximation_Polygonal}, we obtain
$$
J_i\left( \omega^\tau_i;\boldsymbol{\omega}^\tau_{-i} \right) = J_i\left( \gamma^\tau_i;\boldsymbol{\gamma}^\tau_{-i} \right) + O(\tau),
$$
and
$$
J_i\left( \omega_i;\boldsymbol{\omega}^\tau_{-i} \right) = J_i\left( \omega_i;\boldsymbol{\gamma}^\tau_{-i} \right) + O(\tau).
$$
These two relations above easily imply that
\begin{equation} \label{eq:ApproxNE_Polygonal}
    J_i\left( \gamma^\tau_i;\boldsymbol{\gamma}^\tau_{-i} \right) \geqslant J_i\left( \omega_i;\boldsymbol{\gamma}^\tau_{-i} \right) - C\tau,
\end{equation}
for each admissible $\omega_i,$ from where the approximate Nash equilibrium property follows.

\textbf{Step 3.} Up to a subsequence, the approximating sequence $\left\{ \gamma^\tau \right\}_{\tau \in \mathcal{T}_T} = \left\{ \gamma^{\frac{T}{M}} \right\}_{M=M_0}^{\infty}$ has a limiting path $\boldsymbol{\omega}^* \in \mathcal{A}_T \cap C\left(\left[0,T\right]\right)^N.$ Furthermore, $\boldsymbol{\omega}^*$ is a Nash equilibrium.

Estimates \eqref{eq:PolygonalComparion1} and \eqref{eq:PolygonalComparison2} allow us to conclude that the slope of $\gamma^\tau$ on each interval $\left[(j-1)\tau,\,j\tau\right[$ is bounded by a universal constant $C,$ i.e., with $C$ being independent of the interval and of $\tau \in \mathcal{T}_T.$ In general, if $0 \leqslant  s \leqslant t \leqslant T,$ we take $j,k$ such that
$$
(j-1)\tau \leqslant s < j\tau \text{ and } (k-1)\tau \leqslant t \leqslant k\tau. 
$$
In this way, we infer
\begin{align} \label{eq:Equicontinuity}
  \begin{split}
    \left| \gamma^\tau(t) - \gamma^\tau(s) \right| \leqslant&\, \left| \gamma^\tau(t) - \gamma^\tau((k-1)\tau) \right| + \sum_{l=j}^{k-1} \left| \gamma^\tau(l\tau) - \gamma^\tau((l-1)\tau) \right| \\
    &+ \left| \gamma^\tau(t) - \gamma^\tau((j-1)\tau) \right| \\
    \leqslant&\, C(t-s).
  \end{split}
\end{align}
We remark that, if $j = k,$ then the summation in the right-hand side of \eqref{eq:Equicontinuity} vanishes. We conclude that the sequence $\left\{ \gamma^\tau \right\}_{\tau}$ is equicontinuous. Since $\left\{ \boldsymbol{\omega}^{*,\tau} \right\}_{\tau }$ is uniformly bounded (as $\left| \boldsymbol{\omega}^{*,\tau} \right|\leqslant R),$ Eq. \eqref{eq:Approximation_Polygonal} implies that $\left\{ \gamma^\tau \right\}_\tau$ also is. Therefore, by the Arzel\`a-Ascoli Theorem, there exists a subsequential uniform limit $\boldsymbol{\omega}^*,$ i.e.,
$$
\boldsymbol{\gamma}^{\tau_j} \rightarrow \boldsymbol{\omega}^*,
$$
uniformly on $\left[0,T\right],$ as $\tau_j \rightarrow 0$ within $\mathcal{T}_T.$ From \eqref{eq:ApproxNE_Polygonal}, we deduce that $\boldsymbol{\omega}^*$ is a  continuous Nash equilibrium. \end{proof}

We now provide a uniqueness result. This indicates that we should indeed concentrate on continuous equilibria.

\begin{theorem}
If we assume $\zeta_i \equiv 0$ and $K_{ii} \equiv 0,$ then there is a unique continuous Nash equilibrium.
\end{theorem}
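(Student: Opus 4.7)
My first move is to exploit the hypotheses to strip the fixed-point equation \eqref{eq:FixedPoint}--\eqref{eq:DefnOfPhi} of its adjoint component. When $\zeta_i \equiv 0$, the adjoint equation \eqref{eq:AdjointParameters} for $j \neq i$ becomes a linear homogeneous ODE with zero terminal datum, forcing $\varphi_{ji} \equiv 0$. The diagonal state $\varphi_{ii}$ need not vanish, but it enters $\Phi$ multiplied by $K_{ii}^\prime(\omega_i - x_i)^\intercal$, which is identically zero by the assumption $K_{ii} \equiv 0$. Thus the fixed-point relation reduces to the adjoint-free pointwise identity
\begin{equation*}
    \omega_i^*(t) = P_{\mathcal{A}_i}\!\left( x_i^*(t) - \frac{\delta_i}{(1-\delta_i)N}\bigl( \overline{\omega}^*(t) - x_i^*(t) \bigr) \right),
\end{equation*}
so the problem is now a coupled ODE--algebraic system in $(\boldsymbol{\omega}^*, \boldsymbol{x}^*)$ alone.

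\textbf{Uniqueness argument.} Given two continuous Nash equilibria $\boldsymbol{\omega}^*, \widetilde{\boldsymbol{\omega}}^*$ sharing the initial state $\boldsymbol{x}_0$, I would subtract the two instances of the reduced fixed-point identity and apply the $1$-Lipschitz property of $P_{\mathcal{A}_i}$ to obtain, pointwise in $t$,
\begin{equation*}
    |\omega_i^*(t) - \widetilde{\omega}_i^*(t)| \leqslant \Bigl(1+\tfrac{|\delta_i|}{(1-\delta_i)N}\Bigr)|x_i^*(t) - \widetilde{x}_i^*(t)| + \tfrac{|\delta_i|}{(1-\delta_i)N}|\overline{\omega}^*(t) - \overline{\widetilde{\omega}}^*(t)|.
\end{equation*}
Setting $M(t) := \max_i |\omega_i^*(t) - \widetilde{\omega}_i^*(t)|$ and $X(t) := \max_i |x_i^*(t) - \widetilde{x}_i^*(t)|$, the mean-bound $|\overline{\omega}^* - \overline{\widetilde{\omega}}^*| \leqslant M$ together with the standing smallness hypothesis $\max_i \frac{|\delta_i|}{(1-\delta_i)N} < 1$ from Theorem \ref{thm:LocalResult} allows me to absorb the $M$-term on the right-hand side, yielding $M(t) \leqslant C_1 X(t)$ for a constant $C_1 = C_1(N, \{\delta_i\})$.

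\textbf{Closing the loop.} With this control of $M$ by $X$, I would invoke Proposition \ref{prop:ContinuityOnOmega} on each subinterval $[0, t]$ (or equivalently integrate the difference of \eqref{eq:BasicModel} using the Lipschitz character of the $K_{ij}$ inherited from assumption $(\textbf{A})$ and the a priori bound $|x_i| \leqslant R$) to get $X(t) \leqslant C_2 \int_0^t M(s)\,ds$. Composing the two bounds gives $M(t) \leqslant C_1 C_2 \int_0^t M(s)\,ds$, and Gronwall's lemma applied to this integral inequality forces $M \equiv 0$, whence $\boldsymbol{\omega}^* = \widetilde{\boldsymbol{\omega}}^*$ and (by Proposition \ref{prop:ExistenceAndUniqueness}) $\boldsymbol{x}^* = \widetilde{\boldsymbol{x}}^*$.

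\textbf{Where the difficulty sits.} All three steps are linear in spirit once the adjoint variables have been removed; the genuine content of the proof is the reduction in the first step, which crucially relies on both hypotheses simultaneously (neither alone suffices, since the $j=i$ contribution to the sum in \eqref{eq:DefnOfPhi} survives unless $K_{ii}\equiv 0$). The one delicate point in the subsequent estimates is the absorption of $\overline{\omega}^* - \overline{\widetilde{\omega}}^*$: without the smallness condition on $\delta_i$, the resulting inequality would couple $M$ to itself with a factor not strictly less than one, and the Gronwall conclusion would fail.
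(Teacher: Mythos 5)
Your proof is correct, but it follows a genuinely different route from the paper's. Both arguments hinge on the same reduction — under $\zeta_i\equiv 0$ the off-diagonal adjoints $\varphi_{ji}$ ($j\neq i$) solve a homogeneous linear ODE with zero terminal datum and hence vanish, while the surviving diagonal contribution is killed by $K_{ii}\equiv 0$ — but they exploit it differently. The paper observes that, once the adjoint (and hence all dependence on the terminal horizon) drops out of \eqref{eq:DefnOfPhi}, the fixed-point characterization becomes time-consistent: the restriction of any equilibrium to a subinterval is the unique local fixed point of $\Phi$ for the shifted initial data furnished by Theorem \ref{thm:LocalResult}. Global uniqueness then follows from a maximal-interval continuation argument (take the largest $\overline{t}$ up to which two equilibria agree and extend agreement by $\tau$). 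You instead run a direct quantitative estimate: the $1$-Lipschitz property of $P_{\mathcal{A}_i}$ plus absorption of the $\overline{\omega}$-difference under $\max_i |\delta_i|/((1-\delta_i)N)<1$ gives $M(t)\leqslant C_1X(t)$, and Proposition \ref{prop:ContinuityOnOmega} closes the loop via Gr\"onwall. Note that both proofs tacitly require the smallness condition of Theorem \ref{thm:LocalResult} even though the theorem statement omits it — the paper by invoking that theorem wholesale, you by needing $\kappa<1$ to absorb the mean term; you are right to flag this. What your route buys is self-containedness (no appeal to the local existence/uniqueness machinery beyond the necessary condition \eqref{eq:FixedPoint}), an explicit stability constant, and, since the Gr\"onwall argument only needs $M$ measurable and bounded, uniqueness among all admissible $L^2$ equilibria rather than merely continuous ones; what the paper's route buys is brevity and reuse of an already-proved local result.
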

\begin{proof}
Under the current suppositions, we see from \eqref{eq:FixedPoint} that a Nash equilibrium $\boldsymbol{\omega}^*,$ alongside its corresponding state $x^*,$ have the following property: given $0\leqslant t_0<t_1\leqslant T,$ the strategy $\boldsymbol{\omega}^*(\cdot + t_0)|_{\left[0,t_1-t_0\right]}$ is a Nash equilibrium corresponding to the initial data $x^*(t_0).$ Let us consider two Nash equilibria $\boldsymbol{\omega}^1,\boldsymbol{\omega}^2$ on $\left[0,T\right].$ For sufficiently small $\tau > 0,$ it follows from Theorem \ref{thm:LocalResult} that $\boldsymbol{\omega}^1|_{\left[0,\tau\right]} = \boldsymbol{\omega}^2|_{\left[0,\tau\right]}.$ Let $\overline{t}$ be the largest real number in $\left[0,T\right]$ for which this relation holds on $\left[0,\overline{t}\right].$ Denoting by $\overline{x}$ the state determined by $\boldsymbol{\omega}^1|_{\left[0,\overline{t}\right]} = \boldsymbol{\omega}^2|_{\left[0,\overline{t}\right]},$ a unique Nash equilibrium on $\left[\overline{t},\overline{t} + \tau \right]$ originates from the initial condition $\overline{x}(\overline{t}).$ However, $\boldsymbol{\omega}^1|_{\left[\overline{t},\overline{t}+\tau\ \right]}$ and $\boldsymbol{\omega}^2|_{\left[\overline{t},\overline{t}+\tau\ \right]}$ are two such possibilities, whence they must coincide there. We conclude that $\boldsymbol{\omega}^1 \equiv \boldsymbol{\omega}^2$ on $\left[0,T\right].$
\end{proof}

\subsection{Some results on the asymptotic behavior of the Nash equilibria}

Throughout the remainder of this section, let us fix for each $T>0$ a continuous Nash equilibrium $\boldsymbol{\omega}_T = \left( \omega_{1,T},\, \ldots,\, \omega_{N,T} \right)^\intercal.$ We denote the state variable corresponding to $\boldsymbol{\omega}_T$ by $\boldsymbol{x}_T=\left( x_{1,T},\,\ldots,\, x_{N,T} \right)^\intercal.$ In what follows, we present two results concerning the asymptotic behavior of $\boldsymbol{\omega}_T$ and $\boldsymbol{x}_T,$ as $T \rightarrow \infty.$

\begin{proposition}
Let us assume that there are $a,b \in \mathbb{R}^d$ such that:
\begin{itemize}
    \item For each $j \in \mathcal{N},$ the convergences $x_{j,T}(T) \rightarrow a \in A_j,$ as $T \rightarrow \infty,$ and $\omega_{j,T}(T) \rightarrow b \in A_j,$ as $T \rightarrow \infty,$ hold;
    \item For some $i \in \mathcal{N},$ the point $b$ belongs to the interior of the set $A_i.$
\end{itemize}
Then, $a=b.$
\end{proposition}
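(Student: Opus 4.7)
The plan is to exploit the fact that at the terminal time $t=T$ the adjoint parameters $\varphi_{ji}(T)$ vanish, so the fixed-point identity \eqref{eq:FixedPoint}--\eqref{eq:DefnOfPhi} simplifies drastically at the endpoint; passing to the limit $T \to \infty$ then forces a relation between $a$ and $b$ that collapses to $a=b$. As a preliminary, I would record that, since $\mathcal{A}_i = \{f \in L^2(0,T)^d : f(t) \in A_i \text{ a.e.}\}$ with $A_i$ closed and convex in $\mathbb{R}^d$, the operator $P_{\mathcal{A}_i}$ coincides with the pointwise projection $P_{A_i} : \mathbb{R}^d \to A_i$. The continuous Nash equilibrium produced by Theorem \ref{thm:InfinitesimalContinuation} thus satisfies \eqref{eq:DefnOfPhi} pointwise in $t$: both sides are continuous in $t$ (the argument of the projection is continuous because $\boldsymbol{x}_T$, $\boldsymbol{\omega}_T$, and $\boldsymbol{\varphi}_T$ are, and $P_{A_i}$ is $1$-Lipschitz), so the a.e. equality extends to every $t \in [0,T]$, including $t = T$. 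Using $\varphi_{ji}(T) = 0$ in \eqref{eq:AdjointParameters}, this yields
\begin{equation*}
\omega_{i,T}(T) = P_{A_i}\!\left( x_{i,T}(T) - \frac{\delta_i}{(1-\delta_i)N}\bigl( \overline{\omega}_T(T) - x_{i,T}(T) \bigr) \right).
\end{equation*}

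Next I would pass to the limit $T \to \infty$. By hypothesis, $x_{i,T}(T) \to a$ and $\omega_{j,T}(T) \to b$ for each $j \in \mathcal{N}$, so $\overline{\omega}_T(T) \to b$ as well. By continuity of $P_{A_i}$, the display above gives, in the limit,
\begin{equation*}
b = P_{A_i}\!\left( a - \frac{\delta_i}{(1-\delta_i)N}(b - a) \right).
\end{equation*}
Let $\ell$ denote the argument of the projection on the right. The variational characterization of projection onto a closed convex set states $(\ell - b) \cdot (c - b) \leqslant 0$ for every $c \in A_i$. Because $b$ lies in the interior of $A_i$, the vectors $c - b$ range over a full neighborhood of the origin as $c$ varies in $A_i$, forcing $\ell - b = 0$, i.e.\ $\ell = b$.

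Finally, substituting $\ell = b$ and rearranging $b = a - \tfrac{\delta_i}{(1-\delta_i)N}(b-a)$ gives
\begin{equation*}
(b - a)\cdot \frac{N - (N-1)\delta_i}{(1-\delta_i)N} = 0.
\end{equation*}
Because $\delta_i < 1$ (by the standing parameterization of the model), both $(1-\delta_i)N$ and $N - (N-1)\delta_i$ are strictly positive: indeed $(N-1)\delta_i < N-1 < N$. The scalar factor is therefore nonzero, and we conclude $a = b$. The subtle step in this argument is the pointwise reduction at $t = T$; the rest is essentially continuity of projection, the interior-point characterization of $P_{A_i}$, and elementary algebra. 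Notice that the interior assumption is actually used only at a single index $i$, and the resulting conclusion $a = b$ then propagates trivially to every coordinate.
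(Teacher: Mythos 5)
Your proof is correct and follows essentially the same route as the paper's: evaluate the fixed-point identity \eqref{eq:FixedPoint}--\eqref{eq:DefnOfPhi} at $t=T$ where the adjoint parameters vanish, use the interior assumption to strip the projection, and solve the resulting linear relation using $\delta_i<1$. The paper states this in two lines; you have merely supplied the details it leaves implicit (the pointwise reduction of $P_{\mathcal{A}_i}$, the variational characterization of the projection at an interior point, and the sign check on $N-(N-1)\delta_i$), all of which are accurate.
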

\begin{proof}
From \eqref{eq:FixedPoint}-\eqref{eq:DefnOfPhi}, the current assumptions imply
\begin{equation} \label{eq:ThmAsymp1}
    b = a - \frac{\delta_i}{(1-\delta_i)N}\left( b - a \right).
\end{equation}
Since $\delta_i < 1,$ Eq. \eqref{eq:ThmAsymp1} allows us to conclude that $a=b.$
\end{proof}

The above proposition tells us that, for large times, the true and expressed judgments cannot aggregate at two distinct consensus, as long as the true one is interior to at least one of the action spaces. We next investigate what we can say when each individual true and expressed judgments converge, as the terminal time horizon goes to infinity.

\begin{proposition} \label{prop:Asymptotics_when_WeqToX}
If $\omega_{i,T}(T),x_{i,T}(T) \xrightarrow{T \rightarrow \infty} a_i,$ then either $a_i \in \partial A_i,$ or else $a_i = \frac{1}{N}\sum_{j=1}^N a_j.$
\end{proposition}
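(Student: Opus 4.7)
The plan is to push the fixed point characterisation \eqref{eq:FixedPoint}--\eqref{eq:DefnOfPhi} to the terminal time $t=T$, where the adjoint variables vanish by the terminal condition in \eqref{eq:AdjointParameters}, and then pass to the limit $T \to \infty$ using the hypothesised convergence together with the Lipschitz continuity of the Euclidean projection.

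First, since $\mathcal{A}_i$ is the set of square-integrable functions taking values a.e.\ in the closed convex set $A_i$, the operator $P_{\mathcal{A}_i}$ coincides with the pointwise application of the Euclidean projection $P_{A_i}$. Evaluating the fixed-point identity satisfied by the Nash equilibrium $\boldsymbol{\omega}_T$ at $t=T$, and invoking $\varphi_{ji,T}(T)=0$ from \eqref{eq:AdjointParameters}, the last summand in \eqref{eq:DefnOfPhi} drops out, leaving
$$
\omega_{i,T}(T) = P_{A_i}\!\left( x_{i,T}(T) - \frac{\delta_i}{(1-\delta_i)N}\bigl( \overline{\omega}_T(T) - x_{i,T}(T) \bigr) \right).
$$

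Next, I would send $T \to \infty$. By hypothesis, $\omega_{i,T}(T), x_{i,T}(T) \to a_i$; applying the same hypothesis with index $j$ in place of $i$ yields $\omega_{j,T}(T)\to a_j$ for every $j$, and therefore $\overline{\omega}_T(T) \to \overline{a} := N^{-1}\sum_{j=1}^N a_j$. Because $P_{A_i}$ is $1$-Lipschitz, we may pass to the limit inside the projection to obtain
$$
a_i = P_{A_i}\!\left( a_i - \frac{\delta_i}{(1-\delta_i)N}\bigl( \overline{a} - a_i \bigr) \right).
$$

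Finally, I would invoke the variational characterisation of the projection: $P_{A_i}(y)=a_i$ is equivalent to $y-a_i$ belonging to the outward normal cone $N_{A_i}(a_i)$. If $a_i \notin \partial A_i$, i.e.\ $a_i$ lies in the interior of $A_i$, this normal cone reduces to $\{0\}$, forcing $\tfrac{\delta_i}{(1-\delta_i)N}(\overline{a}-a_i)=0$. Since $\delta_i < 1$ by assumption, we deduce $a_i = \overline{a} = N^{-1}\sum_{j=1}^N a_j$, completing the dichotomy. The only delicate point, and where I expect the main obstacle, is the degenerate case $\delta_i=0$, for which the limiting fixed-point relation is trivially satisfied and does not by itself force $a_i=\overline{a}$; I would either note that the proposition is tacitly concerned with non-truthful agents ($\delta_i \neq 0$), or observe that for a truthful agent the claim carries no genuine content since the equilibrium condition places no additional constraint on an interior accumulation point beyond what is already given.
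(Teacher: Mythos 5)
Your proof is correct and follows essentially the same route as the paper's: evaluate the fixed-point relation \eqref{eq:FixedPoint}--\eqref{eq:DefnOfPhi} at $t=T$, where the terminal condition $\varphi_{ji}(T)=0$ kills the adjoint term, pass to the limit $T\to\infty$, and use interiority of $a_i$ to discard the projection and conclude $\tfrac{\delta_i}{(1-\delta_i)N}\bigl(\tfrac{1}{N}\sum_j a_j - a_i\bigr)=0$. Your version is in fact slightly more careful than the paper's at two points: the normal-cone characterisation makes the removal of $P_{A_i}$ rigorous rather than implicit, and the caveat about $\delta_i=0$ is genuine --- the paper's own proof also silently divides by $\delta_i$ when passing from the displayed identity to $a_i=\frac{1}{N}\sum_{j=1}^N a_j$, so the stated dichotomy tacitly presumes non-truthful agents.
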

\begin{proof}
Let us assume that, for some $i\in\mathcal{N},$ we verify $\omega_{i,T}(T),x_{i,T}(T) \rightarrow a_i,$ with $a_i$ in the interior of $A_i,$ as $T \rightarrow \infty.$ Then, by Eqs. \eqref{eq:FixedPoint}-\eqref{eq:DefnOfPhi}, we see that 
$$
a_i = a_i - \frac{\delta_i}{(1-\delta_i)N}\left( \frac{1}{N}\sum_{j=1}^N a_j - a_i \right),
$$
whence $a_i = \frac{1}{N}\sum_{j=1}^N a_j.$
\end{proof}

In many examples, we will see that $\boldsymbol{x}_T(T) - \boldsymbol{\omega}_T(T) \rightarrow 0,$ as $T \rightarrow \infty,$ but this is not always the case, as we will show by means of a counterexample (cf. Figure \ref{fig:PlotsBenchmarkInteresting}).

\section{Numerical experiments} \label{sec:Numerics}

Throughout the present section, we will provide several illustrations of situations our models capture, and draw some conclusions implied by them, which we formulate as ``Stylized Facts'' of our models. For the convenience of the reader, we recall in Table \ref{tab:modeling} the main modeling elements.

\begin{table}[!htp]
\centering
\begin{tabular}{@{}cr@{}}
\toprule
Symbol     & Meaning                                        \\ \midrule
$x_i$      & True judgement                                 \\
$\omega_i$ & Expressed judgement                            \\
$K_{ij}$        & Interaction kernels                             \\
$\delta_i$ & Persuasion ($\delta_i>0$)/conformity ($\delta_i<0$) parameter                \\
$\zeta_i$  & Sensitivity to exogenous objective information \\
$\lambda$  & Exogenous objective information parameter      \\ \bottomrule
\end{tabular}
\caption{Recalling the modeling elements.}
\label{tab:modeling}
\end{table}

Henceforth, we fix the kernel $K_{ij}(z) \equiv K(z) = a(z)z,$ where
\begin{equation} \label{eq:FnDefKernelExp}
    a(z) = 
\begin{cases}
c_{\alpha,\,R}\exp\left( \frac{\alpha}{|z|^2 - R^2} \right)\hspace{1.0cm} &\text{if } |z| < R , \\
0 &\text{otherwise,}
\end{cases}
\end{equation}
and $c_{\alpha,\,R} := e^{\frac{\alpha}{R^2}}$ is a normalizing constant, in such a way that $a(0) = 1.$ Moreover, we take $\alpha = 0.1$ and $R=0.5,$ unless we explicitly state otherwise. We now proceed to describe the numerical algorithms we will use to compute the optimal controls, as well as the associated states.

\subsection{A continuation fixed-point iterative numerical algorithm}

To compute the control and state variables we devised previously, we employ a two step approach. Firstly, for a sufficiently small continuation step, we provide an algorithm to solve the fixed point equation we described in Theorem \ref{thm:LocalResult} --- see Algorithm \ref{algo:fixedPoint}. Secondly, we propose in Algorithm \ref{algo:NE} a continuation method, where we continuously approximate the concatenated strategy by polygonal ones, thus obtaining an approximate control, as we showed in Theorem \ref{thm:InfinitesimalContinuation}. 

\begin{algorithm}[!htp]
\SetAlgoLined
\KwResult{Solution to the fixed point equation \eqref{eq:FixedPoint} and the corresponding state variable}
Initialize with a sufficiently small $T>0,$ the error variable $\epsilon,$ the initial data $x_0,$ and initial guess $(x^0,\omega^0,\varphi^0)$ for the state-control-adjoint tuple, the iteration variable $k=0,$ and the tolerance $\epsilon_0>0.$ \\
\While{$\epsilon \geqslant \epsilon_0$}{
1: Define $\varphi^{k+1}$ as the solution of \eqref{eq:AdjointParameters} with $x = x^{k}$ and $\omega = \omega^{k};$\\
2: Given $\omega^k,$ let $x^{k+1}$ be the solution of \eqref{eq:BasicModel} with $\omega = \omega^k$ and initial data $x_0;$\\
3: Let $\omega^{k+1} = \Phi\left[ \omega^k \right],$ where $x=x^{k+1}$ and $\varphi = \varphi^{k+1}$\\
4: Update $\epsilon$ according to some criterion, say, $\epsilon := \left\| x^{k+1} - x^k \right\|_\infty + \left\| \omega^{k+1} - \omega^k \right\|_\infty$;\\
5: Update $k \gets k+1.$
}
\Return{ $\omega^k,\, x^{k} $ }
\caption{Iterative algorithm}
\label{algo:fixedPoint}
\end{algorithm}

\begin{algorithm}[!htp]
\SetAlgoLined
\KwResult{Approximation to the optimal strategy and corresponding state variables}
Initialize with the time horizon $T>0$ (chosen as desired), let $\tau = T/M$ ($M$ a positive integer) small enough so that Algorithm 1 converges on $\left[0,\tau\right],$ the initial judgments profile $x_0,$ and a tolerance $\epsilon_0 > 0$ (for solving the fixed point equation at each step). \\
\For{$k \in \left\{0,\ldots,M\right\}$}{
1: Let $\left( \omega^k,x^k \right)$ be the control-state pair computed via Algorithm 1 on $\left[k\tau,(k+1)\tau\right]$ with initial data $x^{k}\left( k\tau \right) = x_{k} ;$\\
2: Set $x_{k+1} := x^k\left( (k+1)\tau \right) ;$\\
3: Update $k \gets k+1;$
}
4: Form the continuous approximation $\omega$ whose graph is the polygonal path connecting the points $\left\{ \left(k\tau,\, \omega^k\left(k\tau\right) \right) \right\}_{k=0}^{M}.$\\
5: Let $x$ be the solution of \eqref{eq:BasicModel} corresponding to $\omega.$\\
\Return{$\omega,\,x$}
\caption{Continuation algorithm}
\label{algo:NE}
\end{algorithm}

\subsection{Completely endogenous one-dimensional experiments}

Throughout this set of experiments, we carry out some numerical experiments that are completely endogenous, meaning that the dynamics of the movements of the agents' judgments stem solely from their interactions, i.e., $\zeta_i = 0,$ for each $i\in \mathcal{N}.$ We also resort to the one-dimensional setting ($d=1$). We summarize our results by formulating some ``stylized facts'' of our models, that is, some general statements of situations they capture for suitable configurations of parameters. We recall that the parameters $\delta_1,\,\ldots,\,\delta_N$ represent the persuasiveness/conformity of the agents. In each of the experiments, we will point out what are our choices for these, and we also present them in the legends of the corresponding plots.

We begin with the following:

\textbf{Stylized fact 1.} \textit{Strong opiners can group steadily (i.e., aggregate) in an extreme, even under the assumption of rational behavior.}

\textbf{Stylized fact 2.} \textit{Truthful centrists lead to a more numerous center, even with the presence of persuasive extremists.}

\textbf{Stylized fact 3.} \textit{If centrists are persuasive, then truthful radicals lead to sizable extreme groups.}

For two possible configurations of persuasiveness/conformity parameters, we obtain the outcomes we showcase in Figure \ref{fig:First2Exps1D}. Qualitatively, in the left panel of this figure, there are many more people aggregating around the center than in right one. We also observe in the latter a trapping phenomenon: as those that are in suspicion (i.e., mildly pending to one of the sides) radicalize, the states of the ones with stronger opinions are bound to be at least as extreme as former ones, cf. Lemma \ref{lem:Monotonicity}. Also, we notice that dissimulation, together with the ``physical'' bounds on the domain of admissible judgments, lead to radicalization, even under the assumption of full rationality of the players, cf.   \refcite{hegselmann2015opinion}. These two experiments indicate that the behavior of individuals in suspicion is key to the equilibrium outcome, whence to the first three stylized facts we considered so far.

% \begin{figure}[!htp]
% \centerline{\includegraphics[width=2.4in]{}}
% \vspace*{8pt}
% \caption{}
% \end{figure}

\begin{figure}[!htp]
    \centering
    \begin{subfigure}
        \centering
        \includegraphics[width=2.4in]{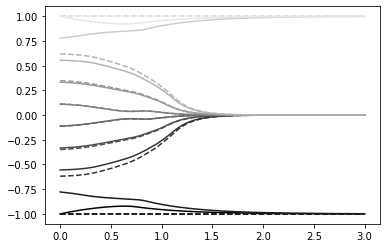}
    %     \caption{$\delta_i = |x_0|\wedge .8$}
    % \label{fig:1stExp}
    \end{subfigure}
    \begin{subfigure}
        \centering
        \includegraphics[width=2.4in]{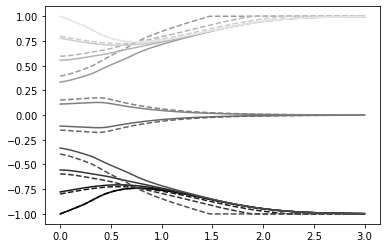}
        % \caption{$\delta_i = (1-|x_0|)\wedge .8$}
        % \label{fig:2ndExp}
    \end{subfigure}
    \caption{Two 1D experiments. In both plots, the horizontal axis represents the time variable, the vertical axis the judgments, and the initial true judgments are varying in an equispaced manner from $-1$ to $1.$ The solid lines are true judgments, whereas the dotted ones are the expressed counterparts (corresponding to the same color). We have put $N=10,$ $\delta_i = \left|x_{0,i}\right|\wedge .8$ on the left panel, and $\delta_i = (1-\left|x_{0,i}\right|)\wedge .8$ on the right one. In both experiments, we set $\zeta_i =0,$ for each $i \in \mathcal{N}.$}
    \label{fig:First2Exps1D}
\end{figure}

Next, we propose two more situations which our model encodes:

\textbf{Stylized fact 4.} \textit{If one of the extremes comprises conforming agents, whereas the opposite one has persuasive agents, with a uniform gradation in between, then we ought to see a prevalence of the persuasive side.}

\textbf{Stylized fact 5.} \textit{Sizable cohesive groups can be effective against radicalization.}

In effect, we present an asymmetric setting in Figure \ref{fig:ThirdExp1D}, in contradistinction to the ones we provided in Figure \ref{fig:First2Exps1D}. Players with initial (true) opinion closer to minus one are conforming, and those closer to one are persuasive. In between these two extremes, the tempers vary uniformly within the range $\left[-0.8,\,0.8\right].$ The fact that those strong opiners to the left of zero give in rapidly leads them to group together those in doubt (i.e., around the center), forming a strong cluster. This coalition does include some agents in suspicion to the right of zero (thus weakly persuasive). There are three players that are strong opining apropos of position ones and are irreducibly extremized. On the other hand, the fact that formed cluster becomes robust enough (i.e., sizable), together with the fact that they are in the range of interaction of the radicals (i.e., expressing opinions that are at a distance of less than a half away from theirs), they manage to deradicalize those three and attain a non-extremal consensus. The consensus, of course, leans strongly to the side of the persuasive extreme, which was expected to begin with, although it is insightful that this consensus did not simply turn out to be position one.

\begin{figure}[!htp]
    \centering
    \centerline{\includegraphics[width=2.4in]{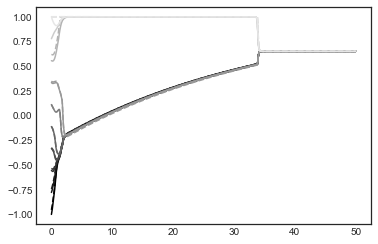}}
    \caption{In the current experiment, the horizontal axis represents the time variable, the vertical axis the distribution of judgments, and the initial true judgments are varying in an equispaced manner from $-1$ to $1.$ The solid lines are true judgments, whereas the dotted ones are the expressed counterparts (corresponding to the same color). We have put $N=10,$ $\delta_i = \left( x_{0,i} \wedge .8 \right) \vee (-.8)$ and $\zeta_i=0.$}
    \label{fig:ThirdExp1D}
\end{figure}

\subsection{Completely endogenous two-dimensional experiments}

We now consider two-dimensional experiments ($d=2$), but still without the influence of exogenous objective information ($\zeta_i =0$). In the first experiment, which we present in Figure \ref{fig:First2DExperiment}, we take $R = 2,$ keeping $\alpha = 0.1$ as before. We took $T=8$ in all plots, and we showcase the trajectories of judgements in them. The initial configuration of judgments is $\left\{\left(0.9,\,0.9 \right),\left(0.8,\,0.7 \right),\left(1,\,-0.35 \right),\left(-0.55,\,-0.35 \right)\right\}.$ This initialization mimics the position in the Economic-Social space of the candidates that participated in the US Presidential Election $2020,$ according to the Political Compass\footnote{\url{https://www.politicalcompass.org/uselection2020}}. The resulting configuration in Figure \ref{fig:First2DExperiment} agrees with the instinctive guess that, if we assume that the radius of interaction is large enough, in such a way that all players interact, then a consensus position arises. 

Now, in Figures \ref{fig:2ndSetOf2DExperiments} and \ref{fig:3rdSetOf2DExperiments}, when we consider the values of interaction radii $R \in \left\{ 1,\, 1.5 \right\},$ all else being the same, the two players in the top right aggregate at the top corner $(1,1),$ always selecting to express it as their opinion (i.e., they radicalize). In the first case, see Figure \ref{fig:2ndSetOf2DExperiments}, they do not interact with the one in the lower right, which then adheres to the lower right corner. However, in the second case, which we display in Figure \ref{fig:3rdSetOf2DExperiments}, this player ends up adhering to the top right as a result of the fact that her actual opinion begins to interact with its radical advocates (those expressing $(1,1)$ statically). In both experiments in Figure \ref{fig:2ndSetOf2DExperiments}, the agent in the lower left does not interact with anyone, resulting in her isolated radicalization in the $(-1,-1)$ position. 

\begin{figure}[!htp]
    \centering
    \begin{subfigure}
        \centering
        \includegraphics[width=2.4in]{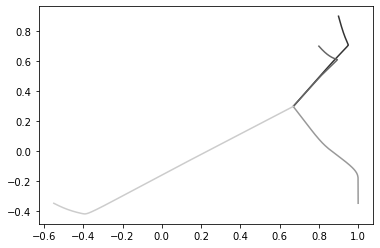}
        % \caption{Actual judgements.}
        % \label{fig:1stExp2D_actual}
    \end{subfigure}
    \begin{subfigure}
        \centering
        \includegraphics[width=2.4in]{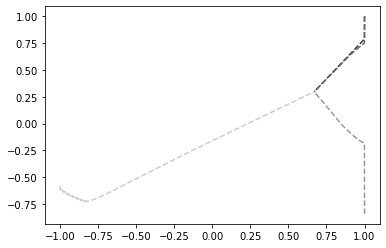}
        % \caption{Expressed judgements.}
        % \label{fig:1stExp2D_expressed}
    \end{subfigure}
    \begin{subfigure}
        \centering
        \includegraphics[width=2.4in]{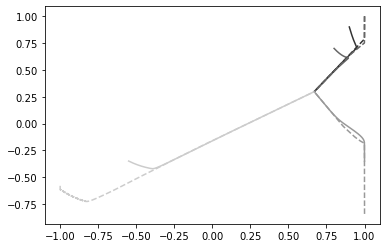}
        % \caption{Both state and control variables.}
        % \label{fig:1stExp2D_both}
    \end{subfigure}
    \caption{We present actual judgments (upper left panel), expressed ones (upper right panel), and both trajectories together (lower panel). The initial configuration of true judgments is $\boldsymbol{x}_0=\left(\left(0.9,\,0.9 \right),\left(0.8,\,0.7 \right),\left(1,\,-0.35 \right),\left(-0.55,\,-0.35 \right)\right)^\intercal.$ We also took $\delta_i=|\boldsymbol{x}_{0,i}|\wedge .8,\,\zeta_i = 0,\,T=8,$ and $R=2.$ Each horizontal (resp., vertical) axis corresponds to the values of the first (resp., second) dimension of the judgments.  The evolution of each judgment (true or expressed) departs from the corresponding initial condition in the direction of the terminal state, to which it converges --- this terminal state being the same for both judgments (here, a consensus). We observe the analogous behavior for the expressed ones, but slightly more extremized (specially in the beginning, cf. the lower panel), but which terminates at the same consensus as the true ones, cf. Proposition \ref{prop:Asymptotics_when_WeqToX}. }
    \label{fig:First2DExperiment}
\end{figure}

\begin{figure}[!htp]
    \centering
    \begin{subfigure}
        \centering
        \includegraphics[width=2.4in]{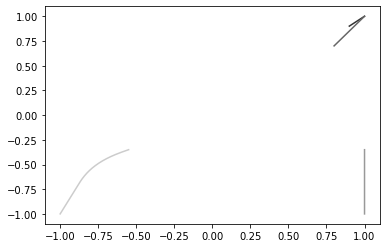}
        % \caption{$R=1$}
        % \label{fig:ReducedR1}
    \end{subfigure}
    \begin{subfigure}
        \centering
        \includegraphics[width=2.4in]{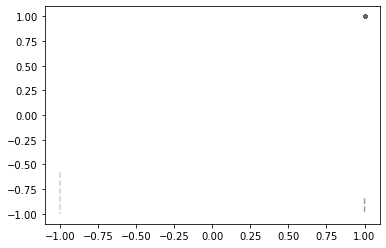}
        % \caption{$R=1.5$}
        % \label{fig:ReducedR1andAHalf}
    \end{subfigure}
    \caption{We present actual judgments (left panel) and the expressed ones (right panel). The initial configuration of true judgments is $\boldsymbol{x}_0=\left(\left(0.9,\,0.9 \right),\left(0.8,\,0.7 \right),\left(1,\,-0.35 \right),\left(-0.55,\,-0.35 \right)\right)^\intercal.$ We also took $\delta_i=|\boldsymbol{x}_{0,i}|\wedge .8,\,\zeta_i = 0,\,T=8,$ and $R=1.$ Each horizontal (resp., vertical) axis corresponds to the values of the first (resp., second) dimension of the judgments.  The evolution of each judgment (true or expressed) departs from the corresponding initial condition in the direction of the terminal state, to which it converges --- this terminal state being the same for both judgments. In the present experiment, the judgments (real \textit{and} expressed) of each player converge to the corner of $\left[0,1\right]^2$ which is closest to their initial true judgment. }
    % Effect of changing the radius on the competitive equilibria configurations. We put $R=1$ in the above experiment. On the left panel, we show the evolution of true opinions, whereas in the right one we present that of the expressed judgements.
    \label{fig:2ndSetOf2DExperiments}
\end{figure}

\begin{figure}[!htp]
    \centering
    \begin{subfigure}
        \centering
        \includegraphics[width=2.4in]{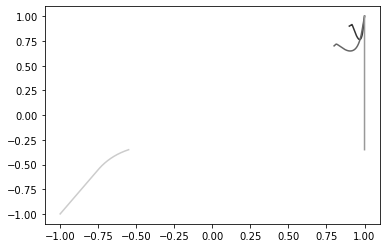}
        % \caption{$R=1$}
        % \label{fig:ReducedR1}
    \end{subfigure}
    \begin{subfigure}
        \centering
        \includegraphics[width=2.4in]{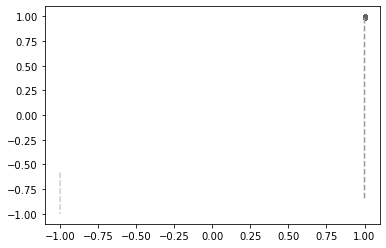}
        % \caption{$R=1.5$}
        % \label{fig:ReducedR1andAHalf}
    \end{subfigure}
    \caption{We present actual judgments (left panel) and the expressed ones (right panel). The initial configuration of true judgments is $\boldsymbol{x}_0=\left(\left(0.9,\,0.9 \right),\left(0.8,\,0.7 \right),\left(1,\,-0.35 \right),\left(-0.55,\,-0.35 \right)\right)^\intercal.$ We also took $\delta_i=|\boldsymbol{x}_{0,i}|\wedge .8,\,\zeta_i = 0,\,$ and $R=1.5.$ Each horizontal (resp., vertical) axis corresponds to the values of the first (resp., second) dimension of the judgments. The evolution of each judgment (true or expressed) departs from the corresponding initial condition in the direction of the terminal state, to which it converges --- this terminal state being the same for both judgments. In the present experiment, the judgments (real \textit{and} expressed) of each player but the one beginning in the lower right corner converge to the corner of $\left[0,1\right]^2$ which is closest to their initial true judgment. Both judgments of the remaining player converge to $(1,1).$ }
    \label{fig:3rdSetOf2DExperiments}
\end{figure}

\subsection{A one-dimensional experiment with exogenous influence}

For the remainder of this section, we assess the influence of external objective information in the model. We base our experiments in the description we made in Example \ref{ex:Example}. We consider a one-dimensional setting ($d=1$), and go back to using the parameters $\alpha = 0.1$ and $R=0.5.$ Firstly, we take $N=5$ agents, and our initial conditions are $(x_{0,1},\,\ldots,\, x_{0,5}) = (0,\,0.25,\,0.5,\,0.75,\,1).$ Now, the extremes correspond to positions labeled $0$ and $1.$ We interpret that all players want to minimize the expected value of the number of occurrences of an undesirable event, as in Example \ref{ex:Example}. As they act, they impact the intensity $\lambda$ of the nonhomogeneous Poisson point process which counts such manifestations. We assume that position $1$ is the ideal take for them to solve this issue, setting
$$
\lambda(\overline{x}) := \lambda_0 + \lambda_1\left(1 - \overline{x}\right).
$$
Unless we explicitly state otherwise, we fix $\lambda_1 = 1.$ Since the optimal strategies do not depend on $\lambda_0,$ we do not specify it here. Regarding their temper, we assume that advocates of extremes positions are symmetrically persuasive. The closer to the middle position of $0.5$ an agent is, the closer she is to being truthful. Explicitly, we take $\delta_i = \left( 2 | x_{0,i} - 0.5 | \right) \wedge 0.8.$ Now, as for their sensitivity to exogenous information, we set $\zeta_i = 100 x_{0,i},$ in such a way that players that are near position $1$ are more sensitive to this external objective information. As a player's initial actual opinion gets closer to zero, they tend to neglect this aspect of the problem and focus on the other elements of their performance criteria. This is in accordance to the phenomenon of confirmation bias, where people favor information that supports their prior positions. 

In the current setting, we propose to address the following stylized facts:

\textbf{Stylized fact 6.} \textit{Even under external object information, in realistic scenarios, rational dissimulating agents can group against the truth.}

\textbf{Stylized fact 7.} \textit{Even in face external objective information, an aggregate incorrect initial judgment can be persistent (if people are not truthful).}

We expose in Figure \ref{fig:1stExpObjInfo} the configuration that results from these elements. We observe a remarkable outcome: people acting strategically (i.e., envisaging to minimize their utilities) end up grouping majorly against the appropriate position. What happens is that players below $0.5$ slightly move towards it, whereas those above it move away from it. Moreover, the latter movement is more significant in magnitude than the former. Thus, the player in the center interacts more intensely with position $0$ players, resulting in her grouping together with them against $1.$ Just after time $3,$ when the agent initially in doubt seems to take a clear direction (the wrong one), the position $1$ advocates give up in trying to convince her, and aggregate at once at the correct spot.

The situation in Figure \ref{fig:1stExpObjInfo} is drastically distinct to the case in which no one is sensitive to external information (i.e., $\zeta_i = 0,$ for all $i\in \mathcal{N}$), \textit{ceteris paribus}, which we showcase in the left panel of Figure \ref{fig:PlotsBenchmarkInteresting} as a benchmark. The latter experiment is also insightful, as it shows that it is possible that players accommodate at an equilibrium in which actual and expressed judgments of some players (here, all but one) do not coincide asymptotically in time. It is an interesting experiment, showing the richness of dynamics we can obtain as competitive equilibria in our model.

\begin{figure}[!htp]
    \centering
    \begin{subfigure}
        \centering
        \includegraphics[width=2.4in]{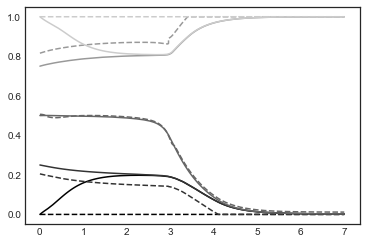}
    \end{subfigure}
    \caption{Experiment under the influence of exogenous objective information. The five initial true judgments being in an equispaced configuration from $0$ to $1,$ we take $\delta_i=\left(2|x_{0,i} - 0.5|\right)\wedge 0.8,$ $\zeta_i = 100 x_{0,i},$ and $\lambda(\overline{x})=\lambda_0 + 1  - \overline{x}$ (results are independent of $\lambda_0$). The horizontal axis represents time, whereas the vertical one is our scale of judgments. The continuous lines are true judgments, the dotted ones are the expressed counterpart (of the same color). }
    \label{fig:1stExpObjInfo}
\end{figure}

\begin{figure}[!htp]
    \centering
    \begin{subfigure}
        \centering
        \includegraphics[width=2.4in]{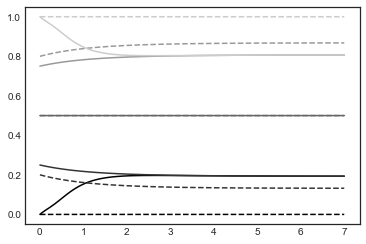}
        % \caption{$\zeta_i = 0,$ \textit{ceteris paribus}.}
        % \label{fig:BenchmarkCeterisParibus}
    \end{subfigure}
    \begin{subfigure}
        \centering
        \includegraphics[width=2.4in]{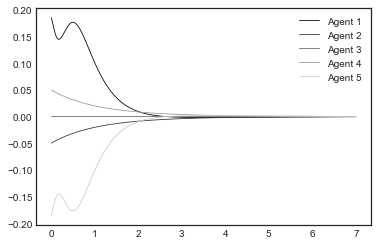}
        % \caption{$\dot{x}_i$}
        % \label{fig:DerivativesBenchmark}
    \end{subfigure}
    \caption{Benchmark for the experiment under the influence of exogenous objective information. The five initial true judgments being in an equispaced configuration from $0$ to $1,$ we take $\delta_i=\left(2|x_{0,i} - 0.5|\right)\wedge 0.8,$ $\zeta_i = 0.$ The horizontal axis represents time, whereas the vertical one is our scale of judgments. In the left panel, the continuous lines are true judgments, the dotted ones are the expressed counterpart (of the same color). In the right panel, we show $\dot{x}_i$ in the right one.}
    \label{fig:PlotsBenchmarkInteresting}
\end{figure}

If every player were truthful, i.e., $\delta_i = 0,$ for all $i \in \mathcal{N},$ \textit{ceteris paribus}, then we would obtain as the equilibrium the configuration we present in the left panel of Figure \ref{fig:ExpsObjTruthfuly}. In this case, we begin noticing that the expressed judgments differ from the actual ones only slightly, which is consistent with what we expect from truthful agents. Next, we see that in face of the exogenous information, all agents rapidly gather together in a state of doubt, i.e., around $0.5,$ not pending decidedly to neither side. Bundled together, they proceed to digest what they captured in extramental reality, walking gradually towards the correct side. Given enough time, they eventually reach the correct position, collectively finding the correct solution. We emphasize the remarkable fact that we maintained $\zeta_i = 100 x_{0i},$ whence our modeling of confirmation bias is still in force, highlighting the importance of truthfully in the resulting efficient collective behavior (from a social welfare perspective). We have raised the value of $\lambda_1$ to $7$ in the right panel of Figure \ref{fig:ExpsObjTruthfuly}, making the deviation between real and expressed judgments be a bit more significant, and players to reach an agreement sooner.

\begin{figure}[!htp]
    \centering
    \begin{subfigure} 
        \centering
        \includegraphics[width=2.4in]{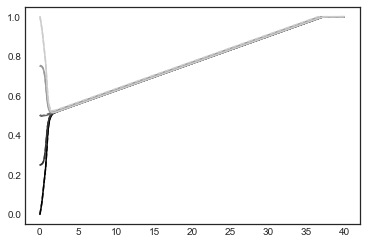}
        % \caption{Plot with $\delta_i = 0$ and $\lambda_1 = 1.$}
        % \label{fig:ExpObjHon1}
    \end{subfigure}
    \begin{subfigure} 
        \centering
        \includegraphics[width=2.4in]{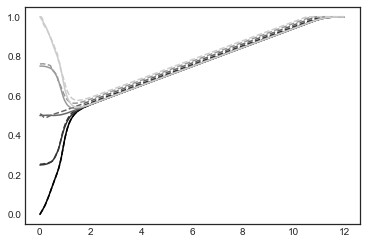}
        % \caption{Plot with $\delta_i = 0$ and $\lambda_1 = 1.$}
        % \label{fig:ExpObjHon1}
    \end{subfigure}
    \caption{Two experiments with objective information under complete truthfully, i.e., $\delta_i = 0,$ for every $i \in \mathcal{N}.$ In the left panel, we set $\lambda(\overline{x}) = \lambda_0 + 1-\overline{x},$ whereas $\lambda(\overline{x}) = \lambda_0 + 7(1-\overline{x})$ in the right one. For the remaining parameters, we took $N=5,$ the initial true judgments in an equispaced configuration from $0$ to $1,$ and $\zeta_i = 100 x_{0,i},$ in both plots. Furthermore, in the two figures, the horizontal axis represents time, whereas the vertical one comprises values of the judgments; continuous lines are real judgments, whereas the dashed ones (of the same color) are the expressed counterparts.}
    \label{fig:ExpsObjTruthfuly}
\end{figure}

We finish this section by showing in Figure \ref{fig:AlternativeInitConfigExtInfo} two situations in which the population has an initial average judgment in the opposite side of the correct one. There are seven individuals, having initial actual judgments $\left\{ 0,\,.1,\,.2,\,.3,\,.5,\,.7,\,.9\right\},$ with an average of about $.39 < .5.$ All the remaining parameters are as in Figure \ref{fig:1stExpObjInfo}. We compare the $\lambda_1=1$ case to the $\lambda_1=7$ one to highlight the persistence of the observed behavior relative to the strength of the exogenous information that players observe. Therefore, the initial average judgment of the population is nearer to position $0$ than to $1$ --- the aggregate is in suspicion of zero as being the best choice of action. Consequently, all but two of the players do quickly agree on assenting at a consensus consisting of the wrong position. The agents not complying with this position are the ones that begin closer to position $1,$ and they cannot convince the truthful one, initially in doubt.  

\begin{figure}[!htp]
    \centering
    \begin{subfigure}
        \centering
        \includegraphics[width=2.4in]{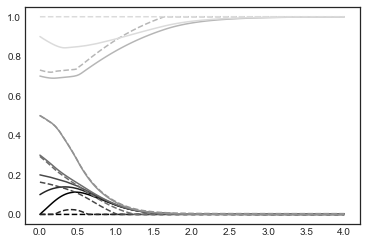}
        % \caption{$\lambda_1 = 1$}
    \end{subfigure}
    \begin{subfigure}
        \centering
        \includegraphics[width=2.4in]{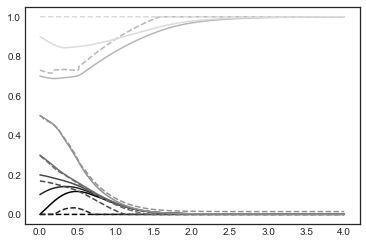}
        % \caption{$\lambda_1 = 7$}
    \end{subfigure}
    \caption{A case with a skewed initial set of judgments. We take $\boldsymbol{x}_0 = \left(0,\,.1,\,.2,\,.3,\,.5,\,.7,\,.9\right)^\intercal.$ Moreover, we put $\lambda(\overline{x}) = \lambda_0 + 1-\overline{x}$ (left panel) and $\lambda(\overline{x})=\lambda_0 + 7(1-\overline{x})$ (right panel). The remaining parameters are $\delta_i = \left(2\left|x_{0,i}-0.5\right|\right)\wedge 0.8$ and $\zeta_i=100 x_{0,i},$ for $i\in\mathcal{N}.$ In both plots, the horizontal axis represents time, the vertical one the judgments. The solid lines are real judgments, whereas the dotted ones are the expressed corresponding to the solid lines of the same color.}
    \label{fig:AlternativeInitConfigExtInfo}
\end{figure}

\section{The cooperative game} \label{sec:Coalitional}

In this section, we investigate the situation in which the populations' judgments still evolve under \eqref{eq:BasicModel}, and use the criteria $J_i$ we defined in \eqref{eq:ObjCriteria}, but now we do not assume that they are competing. Rather, we suppose that they lean towards building a consensus, at least in principle. In particular, from here on we will abandon the concept of Nash equilibrium, and look for an alternative notion which most appropriately represents the collective behavior of agents among a cooperative population. In this direction, we adopt a definition inspired in the one that the mathematical economist Vilfredo Pareto proposed\footnote{His actual words were (see   \refcite{pareto1964cours}, page 18): ``Nous étudierons spécialement l'\textit{équilibre économique}. Un système économique sera dit en équilibre si le changement d'une des conditions de ce système entraîne d'autres changements qui produiraient une action exactement opposée.''}: a social optimum is a set of strategies such that any individual improvement is necessarily detrimental to someone else. In practice, not every Pareto optimal strategy realistically expresses something that we would regard as a social optimum. However, we commonly identify a whole front of Pareto optimal controls, and it is a fair guess to look to such an optimizer within this set. We engage in this discussion more technically in the sequel.

\subsection{Necessary and sufficient conditions for a cooperative equilibrium}

\begin{definition}
A strategy $\boldsymbol{\omega}^*$ is a Pareto equilibrium if there does not exist $\boldsymbol{\omega}$ such that
$$
J_i(\boldsymbol{\omega}) \leqslant J_i(\boldsymbol{\omega}^*),\, i\in\mathcal{N},
$$
with the inequality holding strictly for at least one such $i.$
\end{definition}

The following result is classical in the optimization literature, cf. \refcite{leitmann1974cooperative} and  \refcite{ciarlet1989introduction}.

\begin{proposition} \label{prop:CharactPareto}
If the strategy $\boldsymbol{\omega}^*$ minimizes the functional $\sum_{i=1}^N \theta_i J_i,$ for some $\theta_1,...,\theta_N > 0$ subject to $\sum_{i=1}^N \theta_i = 1,$ then it is a Pareto equilibrium. Conversely, if $\boldsymbol{\omega}^*$ is a Pareto equilibrium and the functionals $\left\{ J_i \right\}_{i=1}^N$ are convex, then $\boldsymbol{\omega}^*$ must minimize $J = \sum_{i=1}^N \theta_i J_i,$ for some $\theta_1,\ldots,\theta_N \geqslant 0,$ such that $\sum_{i=1}^N \theta_i = 1.$
\end{proposition}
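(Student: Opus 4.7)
\textbf{Plan for proving Proposition \ref{prop:CharactPareto}.} The first implication is essentially definitional. I would suppose, for contradiction, that $\boldsymbol{\omega}^*$ minimizes $\sum_{i=1}^N \theta_i J_i$ but is not Pareto optimal. Then there would exist $\boldsymbol{\omega} \in \mathcal{A}$ with $J_i(\boldsymbol{\omega}) \leqslant J_i(\boldsymbol{\omega}^*)$ for every $i$, with strict inequality at some $i_0$. Multiplying by $\theta_i>0$ and summing yields $\sum_i \theta_i J_i(\boldsymbol{\omega}) < \sum_i \theta_i J_i(\boldsymbol{\omega}^*)$, since $\theta_{i_0}>0$ makes the strict inequality survive. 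This contradicts the minimality assumption, completing the easy half.

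The converse is the substantive part and I would prove it by a convex separation argument. Consider the achievable set
\[
U := \left\{(r_1,\ldots,r_N) \in \mathbb{R}^N : \exists \boldsymbol{\omega}\in\mathcal{A} \text{ with } J_i(\boldsymbol{\omega}) \leqslant r_i \text{ for all } i\in\mathcal{N} \right\},
\]
i.e., the ``upper shadow'' of the image $\{J(\boldsymbol{\omega}) : \boldsymbol{\omega}\in\mathcal{A}\}$ plus the nonnegative orthant. The first step is to check that $U$ is convex: given two points of $U$ realized by $\boldsymbol{\omega}_1,\boldsymbol{\omega}_2\in\mathcal{A}$, the convex combination $t\boldsymbol{\omega}_1+(1-t)\boldsymbol{\omega}_2$ lies in $\mathcal{A}$ by convexity of each $A_i$, and the convexity of every $J_i$ gives $J_i(t\boldsymbol{\omega}_1+(1-t)\boldsymbol{\omega}_2) \leqslant tJ_i(\boldsymbol{\omega}_1)+(1-t)J_i(\boldsymbol{\omega}_2)$, placing the convex combination of the two points into $U$.

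With $p^* := (J_1(\boldsymbol{\omega}^*),\ldots,J_N(\boldsymbol{\omega}^*))$, let $D := p^* - \operatorname{int}(\mathbb{R}^N_+)$ denote the open ``strictly better'' orthant at $p^*$. Pareto optimality of $\boldsymbol{\omega}^*$ implies $U \cap D = \emptyset$: any $\boldsymbol{\omega}$ producing a point of $U$ inside $D$ would strictly improve every $J_i$, in particular violating the Pareto condition. Since $U$ and $D$ are disjoint convex sets with $D$ open, the Hahn--Banach (geometric) separation theorem provides $\theta \in \mathbb{R}^N\setminus\{0\}$ and $\alpha \in \mathbb{R}$ with $\theta \cdot r \geqslant \alpha$ for all $r\in U$ and $\theta \cdot r \leqslant \alpha$ for all $r\in D$. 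Letting any single coordinate of $r\in D$ tend to $-\infty$ forces $\theta_i \geqslant 0$ for every $i$; taking $r\to p^*$ from within $D$ gives $\theta\cdot p^* \leqslant \alpha$, while $p^*\in U$ gives $\theta\cdot p^* \geqslant \alpha$, so $\theta\cdot p^* = \alpha$.

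Evaluating the inequality $\theta\cdot r \geqslant \alpha$ at $r=(J_1(\boldsymbol{\omega}),\ldots,J_N(\boldsymbol{\omega}))\in U$ for arbitrary $\boldsymbol{\omega}\in\mathcal{A}$ yields $\sum_i \theta_i J_i(\boldsymbol{\omega}) \geqslant \sum_i \theta_i J_i(\boldsymbol{\omega}^*)$; normalizing so that $\sum_i \theta_i = 1$ (possible since $\theta\neq 0$ and $\theta_i\geqslant 0$) then delivers the conclusion. The main obstacle is the separation step: one must be careful that Pareto optimality only forbids \emph{weak} dominance, whereas the open orthant $D$ records \emph{strict} dominance, so a limiting argument (using that $D$ is open and contains points arbitrarily close to $p^*$) is needed to pin down the equality $\theta\cdot p^* = \alpha$ and hence to turn the separation inequality into a genuine minimization statement.
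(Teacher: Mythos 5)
Your proposal is correct. The paper itself gives no proof of this proposition --- it simply labels the result as classical and cites the optimization literature --- and your argument (the trivial weighted-sum direction, plus scalarization of a Pareto point via separating the convex shadow set $U = J(\mathcal{A}) + \mathbb{R}^N_{+}$ from the open orthant $p^* - \operatorname{int}(\mathbb{R}^N_{+})$) is precisely the standard proof found in those references, with the weak-versus-strict dominance subtlety and the normalization of $\theta$ handled correctly.
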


Proposition \ref{prop:CharactPareto} sheds light on the very definition of our performance criteria. In effect, from \eqref{eq:ObjCriteria_rewritten}, we see that a minimizer of the term $\widetilde{J}_i$ in $J_i$ is a Pareto optimal strategy, in a certain sense. If we divide $J_i$ by $1/2+\zeta_i,$ then we can even see a minimizer of $J_i$ as a Pareto equilibrium for the criteria $\left\{ \widetilde{J}_i,\, I \right\}.$ In this way, we can interpret the (partial) minimization of $J_i$ as if person $i$ internally tried to reconcile her possibly conflicting personal goals by accommodating in a suitable Pareto optimal strategy. 

We can interpret the parameters $\theta_1,\ldots, \theta_N$ when forming $J = \sum_{i=1}^N \theta_i J_i$ as the influence of the corresponding agent in the cooperative formation. In fact, $J$ is simply a weighted average of the set of criteria $\left\{ J_i \right\},$ the $\theta_1,\ldots,\theta_N$ being precisely the weights. Proposition \ref{prop:CharactPareto} shows that some Pareto optimal strategies are found precisely as minima of these $J.$ A consequence of this observation is that, through the choice of these weights, we can introduce a hierarchy in the model: a player having higher $\theta_i$ (relative to her peers) is such that her particular $J_i$ out-stands in the average, wherefrom we expect a possible consensus to be reached near to her judgment. 

Let us also remark that choosing $\theta_i = 0, $ for some $i,$ when forming the functional $J := \sum_{i=1}^N \theta_i J_i$ is not likely to be a realistic social optimum. Indeed, in this case, the functional $J_i$ of agent $i\in \mathcal{N}$ is disregarded in the formation of $J.$ In particular, if $\theta_i = 0,$ for all but one $i_0 \in \mathcal{N},$ then $J = J_{i_0},$ and a Pareto optimizer is simply the strategy which minimizes $J_{i_0}.$ This means that everyone would just act in such a way as to help the performance of player $i_0,$ which will most likely lead to an uninteresting behavior. For this reason, we will focus on the elements of the Pareto front corresponding to $\theta_1,\ldots,\theta_N > 0.$ 

As in the competitive setting, we split the proof in two parts (Theorems ~\ref{thm:ParetoNecessary} and ~\ref{thm:ParetoContinuousExistence} below) --- obtaining the necessary condition and then verifying their sufficiency for small time, which is then extended to all time.

\begin{theorem} \label{thm:ParetoNecessary}
Let us consider a Pareto equilibrium $\boldsymbol{\omega}^* = \left( \omega_1^{*},\ldots,\omega_N^{*}\right)^\intercal$ minimizing $\sum_{i=1}^N \theta_i J_i,$ for some $\theta_1,\ldots, \theta_N > 0$ with $\sum_{i=1}^N \theta_i = 1.$ Then, 
\begin{equation} \label{eq:NecessaryCondnsPareto}
    \boldsymbol{\omega}^*_i(t) = \Psi\left[ \boldsymbol{\omega}^* \right]_i(t),
\end{equation}
for the mapping $\Psi : \mathcal{A} \rightarrow \mathcal{A}$ whose $i-$th component is\footnote{For the definition of $P_{\mathcal{A}_i},$ see Remark \ref{rem:projection}.}
\small
\begin{align} \label{eq:Psi_defn}
  \begin{split}
    \Psi\left[ \boldsymbol{\omega} \right]_i := P_{\mathcal{A}_i}&\left( x_i - \frac{1}{\theta_i(1-\delta_i)}\left( \frac{1}{N}\sum_{j=1}^N \theta_ j \delta_j \left( \overline{\omega} - x_j \right) \right.\right. \\
    &\hspace{0.5cm}\left.\left.+ \sum_{j=1}^N K_{ji}^\prime(\omega_i - x_j)^\intercal \varphi_{j} \right) \right),
  \end{split}
\end{align}
\normalsize
where we define $\boldsymbol{x} = (x_1,\ldots,x_N)^\intercal $ as the state corresponding to $\boldsymbol{\omega},$ $\overline{\omega} := \frac{1}{N}\sum_{j=1}^N \omega_j,$ $\overline{x} := \frac{1}{N}\sum_{j=1}^N x_j,$ and the functions $\left\{\varphi_{j} \right\}_j$ solve
\small
$$
\begin{cases}
        -\dot{\varphi}_{j}(t) = - \sum_{l=1}^N K_{jl}^\prime\left(  \omega_l(t) - x_j(t)\right)^\intercal\varphi_{j}(t) + \theta_j \left\{ x_j(t) -  \left[ \left(1-\delta_j\right)\omega_j(t) + \delta_j\overline{\omega}(t) \right] \right\} \\
        \hspace{1.2cm}+ \frac{1}{N}\sum_{k=1}^N \theta_k \zeta_k \lambda^\prime\left(\overline{x}(t) \right), \, 0 < t < T,\\
        \varphi_{j}(T) = 0.
\end{cases}
$$
\normalsize
\end{theorem}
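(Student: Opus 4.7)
The strategy closely mirrors the variational argument used to establish Theorem~\ref{thm:NecessaryCondnNE}, adapted to the fact that a Pareto point in the interior of the front minimizes the scalarized functional $J := \sum_{k=1}^N \theta_k J_k$ jointly over all $\omega_i$, rather than minimizing each $J_i$ only in its own direction. Since $\boldsymbol{\omega}^*$ is such a minimizer, for every $i$ the partial Gâteaux derivative $\langle D_i J(\boldsymbol{\omega}^*), v^i\rangle$ must satisfy a variational inequality over the admissible perturbations $v^i$ such that $\omega_i^* + \varepsilon v^i \in \mathcal{A}_i$, equivalently characterized by the projection identity on $\mathcal{A}_i$.

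First I would compute $\langle D_i J(\boldsymbol{\omega}),v^i\rangle = \sum_{k=1}^N \theta_k \langle D_i J_k(\boldsymbol{\omega}),v^i\rangle$. Using exactly the sensitivities $y^{ji}$ satisfying \eqref{eq:ODEforY} (these are intrinsic to the dynamics \eqref{eq:BasicModel} and do not depend on which objective is being differentiated), and being careful to track the direct $v^i$-contributions from each $J_k$ versus the state-mediated ones, one obtains
\begin{align*}
\langle D_i J, v^i\rangle =& \int_0^T \bigg\{ v^i(t)\cdot\Big[ \theta_i(1-\delta_i)(\omega_i - x_i) + \tfrac{1}{N}\sum_{k=1}^N \theta_k\delta_k(\overline{\omega}-x_k) \Big] \\
&\qquad + \sum_{j=1}^N y^{ji}(t)\cdot\Big[ \theta_j\{x_j - [(1-\delta_j)\omega_j + \delta_j\overline{\omega}]\} + \tfrac{1}{N}\sum_{k=1}^N \theta_k\zeta_k \nabla\lambda(\overline{x}) \Big] \bigg\}\,dt.
\end{align*}

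The decisive observation is that the coefficient of $y^{ji}(t)$ depends only on $j$ (not on $i$): this is precisely why the Pareto adjoint system carries a single index, in contrast with the Nash case where the analogous coefficient contained $\boldsymbol{1}_{i=j}$ and $\zeta_i$. Accordingly I introduce $\varphi_j$ as the (terminal-value-problem) solution of the stated adjoint ODE, so that the coefficient above equals $-\dot\varphi_j + \sum_l K'_{jl}(\omega_l - x_j)^\intercal \varphi_j$. Integrating by parts against $y^{ji}$, using the forcing $K'_{ji}(\omega_i - x_j)v^i$ from \eqref{eq:ODEforY}, and the boundary data $y^{ji}(0)=\varphi_j(T)=0$, eliminates all state sensitivities and yields
\begin{equation*}
\langle D_i J, v^i\rangle = \int_0^T v^i(t)\cdot\bigg[ \theta_i(1-\delta_i)(\omega_i - x_i) + \tfrac{1}{N}\sum_{j=1}^N \theta_j\delta_j(\overline{\omega}-x_j) + \sum_{j=1}^N K'_{ji}(\omega_i - x_j)^\intercal \varphi_j \bigg]\,dt.
\end{equation*}

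Finally, the standard variational inequality for minimization over a closed convex set, divided through by $\theta_i(1-\delta_i)>0$, is equivalent to the projection identity $\omega_i^* = P_{\mathcal{A}_i}(\omega_i^* - \alpha \nabla_i J)$ for any $\alpha>0$; selecting $\alpha = [\theta_i(1-\delta_i)]^{-1}$ and rearranging gives precisely $\omega_i^* = \Psi[\boldsymbol{\omega}^*]_i$ in the form \eqref{eq:Psi_defn}. The genuinely new step compared with Theorem~\ref{thm:NecessaryCondnNE} is the collapse of the two-index adjoint $\varphi_{ji}$ to a single-index $\varphi_j$, and the main bookkeeping obstacle is organizing the direct and state-mediated contributions after the $\sum_k \theta_k$ aggregation so that this collapse becomes transparent; the integration-by-parts and projection arguments are otherwise entirely parallel.
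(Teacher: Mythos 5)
Your proposal is correct and follows exactly the route the paper intends: the paper's own proof is a one-line reference to the variational argument of Theorem \ref{thm:NecessaryCondnNE} applied to the scalarized functional $J=\sum_k \theta_k J_k$, and you have simply carried out that computation in full, including the key observation that the coefficient of $y^{ji}$ depends only on $j$ (which is why the adjoint collapses to the single-index $\varphi_j$) and the projection identity with step $\alpha=[\theta_i(1-\delta_i)]^{-1}$. No gaps; your write-up is in fact more detailed than the paper's.
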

\begin{proof}
We proceed as in Theorem \ref{thm:NecessaryCondnNE} to deduce the necessary conditions \eqref{eq:NecessaryCondnsPareto} as a consequence of the first-order optimality conditions that the minimization of $\boldsymbol{\omega} \in \mathcal{A} \mapsto J\left( \boldsymbol{\omega} \right) \in \mathbb{R}$ requires. 
\end{proof}

As a necessary condition, we obtained Eq. \eqref{eq:NecessaryCondnsPareto}, which any minimizer of $\sum_{i=1}^N \theta_i J_i$ must satisfy --- a similar relation to Eq. \eqref{eq:FixedPoint}, which in turn must hold for Nash equilibria in the competitive setting. It is insightful to emphasize some differences between the two. Firstly, we notice that in \eqref{eq:NecessaryCondnsPareto} the adjustment in the expressed judgment of each player, relative to their actual one, depend on the individual only in their intensity, i.e., though the parameter $\frac{1}{\theta_i(1-\delta_i)}$ and the interaction variation strength $K^\prime_{ji}.$ This is radically distinct to what we observe in \eqref{eq:FixedPoint}. In the present cooperative setting, agents move upon considering an aggregate weighted deviation from the average overall expressed opinion (apart from the adjoint parameters), whereas in the competitive framework, each player moved based on her signal only. Moreover, in the cooperative framework, the adjoint parameters are homogeneous throughout the population --- this is not the case in the competitive counterpart. Finally, let us point out that in the current cooperative setting, Theorem \ref{thm:ParetoNecessary} does not necessarily constrain all the Pareto equilibria, but only those that minimize some convex combination $\sum_{i=1}^N \theta_i J_i.$ We will in fact focus on a finer subclass of such equilibria, as we proceed to discuss.

\begin{theorem} \label{thm:ParetoContinuousExistence}
(a) If $\theta_1,\ldots,\theta_N > 0,\, \sum_{i=1}^N\theta_i =1,$  
\begin{equation} \label{eq:ConditionsForPareto}
    \frac{1}{N}\sum_{j=1}^N \theta_j\delta_j  <  \min_{i \in \mathcal{N}}\left\{ \theta_i(1-\delta_i) \right\},
\end{equation}
and $T>0$ is small enough, then \eqref{eq:NecessaryCondnsPareto} admits a unique continuous solution $\boldsymbol{\omega} \in \mathcal{A}.$ Moreover, taking $T$ smaller, if necessary, this strategy becomes a Pareto equilibrium.

(b) For every $T>0,$ condition \eqref{eq:ConditionsForPareto} implies that there exists a continuous strategy $\boldsymbol{\omega} \in \mathcal{A}$ minimizing $J := \sum_{i=1}^N \theta_i J_i.$
\end{theorem}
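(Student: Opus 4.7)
The plan is to transfer the arguments of Theorems \ref{thm:LocalResult}, \ref{prop:Continuation}, and \ref{thm:InfinitesimalContinuation} to the single weighted functional $J := \sum_i \theta_i J_i$. For part (a), I view the operator $\Psi$ of Theorem \ref{thm:ParetoNecessary} on $C([0,T])^N$ equipped with the sup-norm. Using an adjoint-stability estimate analogous to Lemma \ref{lem:EstimatesForAdjSystem} for the new costates $\varphi_j$ (whose source is linear in $\boldsymbol{\omega},\boldsymbol{x}$ plus the bounded drift $\tfrac{1}{N}\sum_k \theta_k\zeta_k\lambda'(\overline{x})$), together with Proposition \ref{prop:ContinuityOnOmega} for the state, the Lipschitz constant of $\Psi$ decomposes as a $T=0$ contribution coming only from the dependence on $\overline{\omega}$ inside the projection in \eqref{eq:Psi_defn}, and equal to $\bigl|\tfrac{1}{N}\sum_j \theta_j \delta_j\bigr| / \min_i\{\theta_i(1-\delta_i)\}$, plus a correction $O(T)$. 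Condition \eqref{eq:ConditionsForPareto} renders the $T=0$ factor strictly less than $1$, so shrinking $T$ yields a strict contraction; Banach's theorem produces the unique fixed point, which lies in $\mathcal{A}_T$ by construction of $\Psi$.

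To promote the fixed point to a Pareto equilibrium I pass to second-order conditions. Repeating the computation leading to \eqref{eq:SecOrd1} for the weighted sum $J$, the $O(1)$ quadratic contribution along a direction $\boldsymbol{v}=(v^1,\ldots,v^N)^\intercal$ is
\[
\sum_{i=1}^N \theta_i(1-\delta_i)\int_0^T |v^i|^2\,dt + \frac{1}{N^2}\Big(\sum_{i=1}^N \theta_i \delta_i\Big)\int_0^T \Big|\sum_{i=1}^N v^i\Big|^2\,dt,
\]
which is coercive under \eqref{eq:ConditionsForPareto} by the elementary inequality $|\sum_i v^i|^2\leqslant N\sum_i |v^i|^2$, while all the remaining $y^{ji},\psi_{ji}$-type terms are $O(T)\|\boldsymbol{v}\|_{L^2}^2$. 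Strict convexity of $J$ on $\mathcal{A}_T$ for small $T$ then identifies the fixed point as the unique minimizer of $J$, which is Pareto-optimal by Proposition \ref{prop:CharactPareto}.

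For part (b), I deploy the continuation-plus-polygonal-smoothing scheme of Theorems \ref{prop:Continuation} and \ref{thm:InfinitesimalContinuation}. Fix $\tau=T/M$ small enough that part (a) applies on every subinterval of length $\tau$ uniformly in the initial data (this is made possible by the invariant box $|x_i|\leqslant R$ holding throughout), recursively define $\boldsymbol{\omega}^{j,\tau}$ as the local $J$-minimizer on $[(j-1)\tau,j\tau]$ starting from the state bequeathed by piece $j-1$, and concatenate into $\boldsymbol{\omega}^{*,\tau}$. Its polygonal regularization $\boldsymbol{\gamma}^\tau$ satisfies $|\boldsymbol{\omega}^{*,\tau}-\boldsymbol{\gamma}^\tau|=O(\tau)$, and $\{\boldsymbol{\gamma}^\tau\}$ is equicontinuous by the same projection/stability chain used in Theorem \ref{thm:InfinitesimalContinuation}, so Arzelà--Ascoli extracts a continuous subsequential limit $\boldsymbol{\omega}^*\in\mathcal{A}_T$. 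To show that $\boldsymbol{\omega}^*$ minimizes $J$, I argue that $\boldsymbol{\omega}^{*,\tau}$ is already an approximate minimizer: splitting $J(\widetilde{\boldsymbol{\omega}})$ for an arbitrary competitor $\widetilde{\boldsymbol{\omega}}\in\mathcal{A}_T$ across the $M$ pieces and invoking piecewise optimality, while using Proposition \ref{prop:ContinuityOnOmega} to control the drift of the initial conditions of the sub-pieces along the two distinct trajectories, delivers $J(\boldsymbol{\omega}^{*,\tau})\leqslant J(\widetilde{\boldsymbol{\omega}})+O(\tau)$; combining this with $J(\boldsymbol{\gamma}^\tau)=J(\boldsymbol{\omega}^{*,\tau})+O(\tau)$ and passing to the subsequential limit gives the desired minimality.

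The principal obstacle, as in the competitive setting, will be this last approximate-minimizer bookkeeping in part (b): because the dynamics are nonlinear and the initial state of each sub-piece depends on the whole history of the strategy, comparing the greedy concatenation with an arbitrary global competitor demands a careful propagation of the $O(\tau)$ error through the interaction kernels, for which the $C^2_b$ hypothesis on $a_{ij}$ in assumption \textbf{(A)} is essential rather than cosmetic.
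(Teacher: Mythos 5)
Your proposal follows essentially the same route as the paper: the paper's own proof of part (a) is a pointer to the Banach fixed-point and second-order-convexity arguments of Theorem \ref{thm:LocalResult} (with the contraction constant now governed by \eqref{eq:ConditionsForPareto}), and part (b) is the concatenation-plus-polygonal-smoothing-plus-Arzel\`a--Ascoli scheme of Proposition \ref{prop:Continuation} and Theorem \ref{thm:InfinitesimalContinuation} applied to the single functional $J=\sum_i\theta_i J_i$, exactly as you describe. Your explicit coercivity computation for the weighted Hessian and your flagging of the initial-condition-drift bookkeeping in part (b) are details the paper omits entirely, but they are refinements of, not departures from, its argument.
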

\begin{proof}
$(a)$ We can show the existence of a continuous $\boldsymbol{\omega}$ just as we did in Theorem \ref{thm:LocalResult}, viz., by using the stability results we developed in Section \ref{sec:model} and Banach fixed point Theorem. The fact that $\boldsymbol{\omega}$ is a Pareto equilibrium, for sufficiently small $T,$ follows from the second-order conditions, just as in the end of the proof of Theorem \ref{thm:LocalResult}, but considering the joint dependence on the controls of the aggregate cost function --- we omit the details here.

$(b)$ We can fix a sufficiently small $\tau = T/M$ (with $M$ being a sufficiently large positive integer), and concatenate minimizers of the pieces of $J,$ in a similar way as we did in Proposition \ref{prop:Continuation}, thus forming a (possibly discontinuous) minimizer $\widetilde{\boldsymbol{\omega}}^\tau.$ Then, we can construct an approximate minimizer for $J$ by taking an appropriate polygonal $\gamma^\tau$ connecting the points $\left\{ \widetilde{\boldsymbol{\omega}}^\tau(j\tau) \right\}_{j=0}^M.$ By letting $\tau \downarrow 0$ through a suitable subsequence, we argue as in the proof of Theorem \ref{thm:InfinitesimalContinuation} to conclude that $\left\{ \gamma^\tau \right\}_{\tau}$ converges to a continuous minimizer $\boldsymbol{\omega}$ of $J.$
\end{proof}

We remark that, in Theorem \ref{thm:ParetoContinuousExistence}, we identify a subset of the whole Pareto front. We also do not affirm that the strategy $\boldsymbol{\omega}$ we identified in item $(b)$ is the unique minimizer of $J = \sum_{i=1}^N \theta_i J_i$ --- it is indeed \textit{a} (global) minimizer, and, by virtue of Proposition \ref{prop:CharactPareto}, a Pareto equilibrium. However, we argue that these already comprise a rich set and the restrictions we impose are not too strong. Indeed, item $(a)$ constrains, to an extent, how persuasive agents can be, as well as, from below, the influence that each individual has. For studying cooperative games, we advocate that these assumptions are reasonable. Aside from the fact that they make sense when we consider that agents cooperate, we further back up our claim by showing, through some numerical experiments, that we do obtain equilibria configurations representing realistic scenarios in this context.

Before proceeding to the numerical illustrations, we provide the counterpart of Proposition \ref{prop:Asymptotics_when_WeqToX} in the current cooperative setting. The proof is similar to it, whence we omit it here.

\begin{proposition} \label{prop:AsymptoticsForPareto_WeqToX}
Let us suppose that, for each $T>0,$ $\boldsymbol{\omega}(\cdot,\,T)$ is a Pareto optimal strategy minimizing $J = \sum_{i=1}^N \theta_i J_i,$ where $\theta_1,\,\ldots,\,\theta_N>0$ and $\sum_{i=1}^N \theta_i = 1.$ If $\omega_i(T,T),x_i(T,T) \rightarrow a_i,$ then either $a_i \in \partial A_i,$ for all $i \in \mathcal{N},$ or else
$$
\sum_{i=1}^N \theta_i \delta_i\sum_{i=1}^N a_i = \sum_{i=1}^N \theta_ i \delta_i  a_i .
$$
\end{proposition}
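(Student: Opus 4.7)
The plan is to mimic the proof of Proposition \ref{prop:Asymptotics_when_WeqToX}, substituting the cooperative necessary condition \eqref{eq:NecessaryCondnsPareto}--\eqref{eq:Psi_defn} for the Nash fixed-point relation \eqref{eq:FixedPoint}--\eqref{eq:DefnOfPhi}. I argue by contrapositive: suppose there exists some $i_0 \in \mathcal{N}$ with $a_{i_0}$ in the interior of $A_{i_0}$, and derive the claimed algebraic identity.

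First, I evaluate the fixed point relation \eqref{eq:NecessaryCondnsPareto} pointwise at $t = T$; this is legitimate because all quantities involved depend continuously on time. Since $\varphi_{j}(T) = 0$ for every $j \in \mathcal{N}$, the adjoint contribution in \eqref{eq:Psi_defn} drops out and the equation reduces to
$$
\omega_{i_0}(T,T) = P_{A_{i_0}}\!\left( x_{i_0}(T,T) - \frac{1}{\theta_{i_0}(1-\delta_{i_0})N}\sum_{j=1}^N \theta_j \delta_j\bigl(\overline{\omega}(T,T) - x_j(T,T)\bigr) \right).
$$
Next, I pass to the limit $T \to \infty$, using the hypothesis $\omega_j(T,T), x_j(T,T) \to a_j$ (so $\overline{\omega}(T,T) \to \overline{a} := \frac{1}{N}\sum_j a_j$) together with the continuity of $P_{A_{i_0}}$ as a non-expansive map on $\mathbb{R}^d$. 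This yields
$$
a_{i_0} = P_{A_{i_0}}\!\left( a_{i_0} - \frac{1}{\theta_{i_0}(1-\delta_{i_0})N}\sum_{j=1}^N \theta_j \delta_j(\overline{a} - a_j)\right).
$$

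Finally, because $a_{i_0}$ lies in the interior of $A_{i_0}$, the normal cone of $A_{i_0}$ at $a_{i_0}$ is trivial; equivalently, the fiber $P_{A_{i_0}}^{-1}(\{a_{i_0}\})$ reduces to $\{a_{i_0}\}$. The argument of the projection above must therefore itself equal $a_{i_0}$, which forces $\sum_{j=1}^N \theta_j \delta_j(\overline{a} - a_j) = 0$. This rearranges directly into the identity displayed in the statement. The only real subtlety is this last step, namely identifying $P_{A_{i_0}}^{-1}(\{a_{i_0}\}) = \{a_{i_0}\}$ for interior points; it is a standard consequence of the variational characterization of the metric projection, and is the same mechanism used at the corresponding step of Proposition \ref{prop:Asymptotics_when_WeqToX}.
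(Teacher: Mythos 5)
Your argument is correct and is exactly the route the paper intends: the authors omit this proof, noting only that it parallels Proposition \ref{prop:Asymptotics_when_WeqToX}, and yours is the faithful cooperative analogue (evaluate \eqref{eq:NecessaryCondnsPareto} at $t=T$ where the adjoints vanish, pass to the limit $T\to\infty$, and use that the projection fiber over an interior point of $A_{i_0}$ is a singleton). One bookkeeping point: what you actually obtain is $\frac{1}{N}\sum_{j}\theta_j\delta_j\sum_{j}a_j=\sum_{j}\theta_j\delta_j a_j$, which differs from the displayed identity by the factor $\frac{1}{N}$; this appears to be a typo in the statement (compare the correctly normalized conclusion $a_i=\frac{1}{N}\sum_j a_j$ of Proposition \ref{prop:Asymptotics_when_WeqToX}) rather than an error in your derivation, so it would be worth saying explicitly that your identity is the $\frac{1}{N}$-normalized one instead of claiming it ``rearranges directly'' into the printed display.
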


We notice that we cannot necessarily say, under the assumptions of Proposition \ref{prop:AsymptoticsForPareto_WeqToX}, that there is no interior clusterization. In effect, we will show by means of an example that such a phenomenon can happen in this context.

\subsection{Numerical experiments}

Let us recall the mapping $\Psi : \mathcal{A} \rightarrow \mathcal{A}$ we defined in \eqref{eq:DefnOfPhi}, that is,
\scriptsize
$$
\Psi[\boldsymbol{\omega}^*] := P_{\mathcal{A}_i}\left( x^*_i - \frac{1}{\theta_i(1-\delta_i)}\left( \frac{1}{N}\sum_{j=1}^N \theta_j \delta_j \left( \overline{\omega}^* - x_j^* \right) + \sum_{j=1}^N K_{ji}^\prime(\omega_i^* - x_j^* )^\intercal \varphi_{j}^* \right) \right),
$$
\normalsize
where $\left\{ \varphi_j^* \right\}_{j=1}^N$ are as in Theorem \ref{thm:ParetoNecessary} (with $\omega=\omega^*$). We adapt Algorithm \ref{algo:fixedPoint} to the present setting by replacing $\Phi $ by $\Psi$ in it. Subsequently, we compute the Pareto equilibria we show below via an adaptation of Algorithm \ref{algo:NE}, i.e., in which we use the modified Algorithm \ref{algo:fixedPoint}. We fix $K(z) = a(z)z,$ with $a$ as in Section \ref{sec:Competitive}, and we fix the parameters $\alpha = 0.1$ and $R=0.5.$

We proceed to begin the formulation of the first stylized fact of the current setting. Let us consider a cooperative group where one extreme is much more influential than the other. Then, the more extreme advocates of the less influential side might lead the movement towards consensus building. In a way, they will give in their more incisive opinion, so as to ``give the example'' for those that were on their side, but less radically, to follow them. If we assume that agents are persuasive, with a persuasion parameter increasing with respect to the opinion's size, then the more extreme opiners in the less influential side will want to be on the ``winning'' side. In the current cooperative framework, this amounts to the less influential extremists to switch sides more easily --- in a sense, their judgements are more flexible, since everyone is more concerned to building a consensus. However, those initially in suspicion of the less influential side can turn out more stubborn, since they care less about persuading.

\textbf{Stylized fact 8.} \textit{Persuasive agents with an initially less influential extreme opinion have a key role in cooperative formation --- in a way, they lead by example as they give in their radical position.}

\textbf{Stylized fact 9.} \textit{People with a more moderate opinion leaning to the less influential side of a binary proposition can be more stubborn in a cooperative formation. These are, to an extent, responsible for holding up the terminal consensus of being too radical on the initially more influential side.}

In our first experiment of this section, we provide an example illustrating how our model captures the two last stylized facts we stated, see Figure \ref{fig:1D_Pareto_Exp1}. We propose a setting where: (i) The influence $\theta_i$ of player $i$ on the cooperative group is higher the closer her initial judgment is to one; (ii) The persuasion level of an agent is proportional to her initial judgment's absolute value. We notice that the order of the expressed judgments eventually partially flips, with the players that were once nearer to position negative one becoming more intense advocates of the side where the cooperative group forms the consensus. The original agents in suspicion of position minus one show a bit more stubbornness, or some kind of persistence on this side --- here, this is due to the fact that they are more influential. It is also worthwhile to remark that this movement of the opiners of the negative side end up attracting a player slightly in suspicion of position one; thus, we observe the formation of two transitory clusters, the terminal consensus being met halfway between a state of doubt and position one. The latter discussion can also indicate how our model captures, as a result of social interactions, the phenomenon of individuals behaving in an edgy way: some who were once in an extreme side, suddenly become supporters of an opposite viewpoint. We gather some of these insights in the sequel.  

\begin{figure}[pb]
    \centering
    \includegraphics[width = 2.4in]{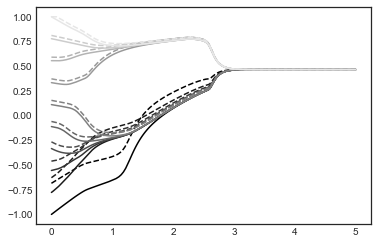}
    \caption{A one-dimensional Pareto equilibrium with $N = 10,$ $\theta_i = 2(i+1)/[N(N+1)],$ $\delta_i = |x_{i0}|/5,$ $\zeta_i=0.$ Initial judgments are distributed in an equispaced manner from $-1$ to $1.$ The horizontal axis consists of time, and the vertical one represents judgments. Solid lines are true judgments, whereas dotted ones are expressed judgments. }
    \label{fig:1D_Pareto_Exp1}
\end{figure}

The second phenomena we pay attention to concerns the role of symmetry in consensus building within a cooperative group. Namely, in realistic settings, it can lead to unsolvable disputes. In real-world situations, these excess of equality of several aspects among a population's individuals can possibly yield conflicts or other kinds of issues.

\textbf{Stylized fact 10.} \textit{From a social viewpoint, excess of temper and influence symmetry, among agents of a population, around a central state, might be an issue for agreement on a consensus.}

To illustrate how our model can capture this stylized fact, we propose the next experiment. Our framework is akin to that of   \refcite{rusinowska2019opinion}. Namely, we consider a setting with two groups of agents: one formed by three major players (leaders), the other constituted by four minor ones (followers). Among the major players, two of them are extreme opiners initially lying in opposite extremes of the admissible spectrum of judgments. The remaining major agent is a centrist/doubter. The four minor players initial judgments $\pm 1/3$ and $\pm 2/3.$ We clarify that we add the hierarchy here by stipulating that the influence of the leaders is higher than that of the followers. We showcase it in Figure \ref{fig:3L4F_sym}, where we consider a symmetric scenario. There, the system attains a terminal configuration consisting of interior clusterization --- as we had already announced that this could happen, in the discussion following Proposition \ref{prop:AsymptoticsForPareto_WeqToX}, and through this example we establish this claim. From this insightful illustration, we obtain a suggestion that, from a social perspective, excess of symmetry (relative to the agents' tempers and overall influence over the population) around the state of doubt, or the center, can be an issue for a cooperative group to agree on an asymptotic consensus --- people might arrive at an unsolvable dispute, in accordance to our previous discussion.

\begin{figure}[!htbp]
    \centering
    \begin{subfigure}
        \centering
        \includegraphics[width=2.4in]{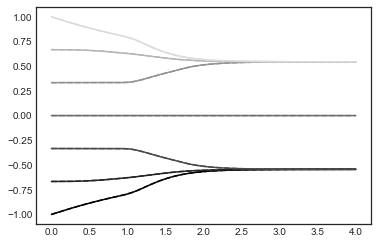}
    \end{subfigure}
    \caption{A configuration in which the extreme opiners and the doubter are hierarchically above, i.e., they have a higher influence on the cooperative formation. We fix $N=7,$ $(\theta_1,\ldots,\theta_7) = (5,1,1,5,1,1,5)/19,$ $\delta_i=|x_{0,i}|/5,$ $\zeta_i=0,$ and initial judgments are distributed in an equispaced manner from $-1$ to $1.$ The horizontal axis comprises time, and the vertical one the judgments. Solid lines are true judgments, and dotted ones are the expressed counterparts (corresponding to the same color).}
    \label{fig:3L4F_sym}
\end{figure}

We now analyze in Figure \ref{fig:3L4F_asym} an asymmetric situation: the followers (i.e., minor agents) on the positive side of the spectrum of opinions are slightly more influential than the other ones. We see the formation of two transitory clusters, in such a way that the agent in doubt bundles together those with a negative opinion, whereas the three with an opinion closer to one form another. The fact that the former cluster turns out more numerous provides them enough strength to make the final consensus not equal to the extreme position one. We also remark that, when the transitory clusters are formed, the larger one is more conforming --- expressing an overall opinion different than their actual ones --- whereas the smaller one is more persuasive, even radicalizing for a while. 

\begin{figure}[!htbp]
    \centering
    \begin{subfigure}
        \centering
        \includegraphics[width=2.4in]{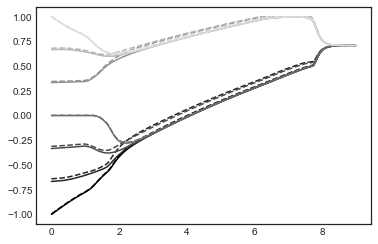}
    \end{subfigure}
    \caption{An alternative configurations where the extreme opiners and the doubter are still hierarchically superior, i.e., they have a higher influence on the cooperative formation. We now fix $N=7,$ $(\theta_1,\ldots,\theta_7) = (5,0.7,0.7,5,1.3,1.3,5)/19,$ $\delta_i=|x_{0,i}|/5,$ $\zeta_i=0,$ and initial judgments are distributed in an equispaced manner from $-1$ to $1.$ The horizontal axis comprises time, and the vertical one the judgments.	 Solid lines are true judgments, and dotted ones are the expressed counterparts (corresponding to the same color).}
    \label{fig:3L4F_asym}
\end{figure}

\section{Conclusions} \label{sec:Conclusions}

We proposed a model of social dynamics in which agents among a finite population interact through their stated judgments. Thus, each agent chooses which judgment she will express, and her true judgment is updated in accordance to those expressed by her peers. We worked in a control-theoretic framework, stipulating that the players updated their judgments by minimizing suitable performance criteria. The elements we considered for the design of these criteria were the agents' temper (persuasive, truthful, or conforming), as well as their sensitivity to objective exogenous information. We modeled the latter aspect as an average of the number of undesirable occurrences whose intensity was influenced by the agents' true judgments.

We first investigated the non-cooperative framework. We considered Nash equilibria (NE), proving a local-in-time existence and uniqueness result. Then, we showed by an iterative method that we could always find square integrable NE, but there was a degree of ambiguity --- the step size. We ruled out this issue by arguing via compactness that, as the continuation step goes to zero, the corresponding strategies we build converge to a continuous NE. In a particular case, we proved that there is in fact at most one (hence, a single one) such equilibrium. Using the richness of the equilibria we obtain by varying the model parameters, we then explored the implications of our model. We proposed a series of stylized facts to demonstrate how our results can provide conceptual insights about real world phenomena.   

Then, we proceeded to study the problem of cooperative formation through the light of our model. In this setting, we worked under the notion of Pareto equilibria. Although we did not provide a general identification of the Pareto front, we were able to find a rich set of Pareto equilibria. Our key assumptions were that neither agents were too persuasive, nor that there were agents with a too small overall influence over the population --- hypotheses that sound reasonable, from the viewpoint of cooperative games. The techniques we used to prove the technical results for this cooperative setting were quite similar to those we employed in the competitive one. We finished this part of the work by providing some numerical experiments, also in a similar form as in the previous (competitive) setting.

\section*{Acknowledgments}

YS, MOS and YT were financed in part by Coordena\c{c}\~ao de Aperfei\c{c}oamento de Pessoal de N\'ivel Superior - Brasil (CAPES) - Finance code 001. MOS was also partially financed by CNPq (grant \# 310293/2018-9) and by FAPERJ (grant \# E-26/210.440/2019).

\bibliographystyle{plain}
\bibliography{refs}

\begin{thebibliography}{10}

\bibitem{arieli2019multidimensional}
Itai Arieli and Manuel Mueller-Frank.
\newblock Multidimensional social learning.
\newblock {\em The Review of Economic Studies}, 86(3):913--940, 2019.

\bibitem{asch1955opinions}
Solomon~E Asch.
\newblock Opinions and social pressure.
\newblock {\em Scientific American}, 193(5):31--35, 1955.

\bibitem{bailo2018pedestrian}
Rafael Bailo, Jos{\'e}~A Carrillo, and Pierre Degond.
\newblock Pedestrian models based on rational behaviour.
\newblock In {\em Crowd Dynamics, Volume 1}, pages 259--292. Springer, 2018.

\bibitem{bala2001conformism}
Venkatesh Bala and Sanjeev Goyal.
\newblock Conformism and diversity under social learning.
\newblock {\em Economic Theory}, 17(1):101--120, 2001.

\bibitem{banerjee2019naive}
Abhijit Banerjee, Emily Breza, Arun~G Chandrasekhar, and Markus Mobius.
\newblock Naive learning with uninformed agents.
\newblock Technical report, National Bureau of Economic Research, 2019.

\bibitem{bacsar1998dynamic}
Tamer Ba{\c{s}}ar and Geert~Jan Olsder.
\newblock {\em Dynamic noncooperative game theory}.
\newblock SIAM, 1998.

\bibitem{basar2018handbook}
Tamer Basar and Georges Zaccour.
\newblock {\em Handbook of dynamic game theory}.
\newblock Springer, 2018.

\bibitem{bauso2016opinion}
Dario Bauso, Hamidou Tembine, and Tamer Basar.
\newblock Opinion dynamics in social networks through mean-field games.
\newblock {\em SIAM Journal on Control and Optimization}, 54(6):3225--3257,
  2016.

\bibitem{blondel2010continuous}
Vincent~D Blondel, Julien~M Hendrickx, and John~N Tsitsiklis.
\newblock Continuous-time average-preserving opinion dynamics with
  opinion-dependent communications.
\newblock {\em SIAM Journal on Control and Optimization}, 48(8):5214--5240,
  2010.

\bibitem{blume2015linear}
Lawrence~E Blume, William~A Brock, Steven~N Durlauf, and Rajshri Jayaraman.
\newblock Linear social interactions models.
\newblock {\em Journal of Political Economy}, 123(2):444--496, 2015.

\bibitem{borzi2015modeling}
Alfio Borzi and Suttida Wongkaew.
\newblock Modeling and control through leadership of a refined flocking system.
\newblock {\em Mathematical Models and Methods in Applied Sciences},
  25(02):255--282, 2015.

\bibitem{bryson2018applied}
Arthur~E Bryson and Yu-Chi Ho.
\newblock {\em Applied optimal control: optimization, estimation, and control}.
\newblock Routledge, 1975.

\bibitem{buechel2015opinion}
Berno Buechel, Tim Hellmann, and Stefan Kl{\"o}{\ss}ner.
\newblock Opinion dynamics and wisdom under conformity.
\newblock {\em Journal of Economic Dynamics and Control}, 52:240--257, 2015.

\bibitem{caillaud2007consensus}
Bernard Caillaud and Jean Tirole.
\newblock Consensus building: How to persuade a group.
\newblock {\em American Economic Review}, 97(5):1877--1900, 2007.

\bibitem{calvo2004effects}
Antoni Calvo-Armengol and Matthew~O Jackson.
\newblock The effects of social networks on employment and inequality.
\newblock {\em American Economic Review}, 94(3):426--454, 2004.

\bibitem{canuto2008eulerian}
Claudio Canuto, Fabio Fagnani, and Paolo Tilli.
\newblock A {E}ulerian approach to the analysis of rendez-vous algorithms.
\newblock {\em IFAC Proceedings Volumes}, 41(2):9039--9044, 2008.

\bibitem{che2009opinions}
Yeon-Koo Che and Navin Kartik.
\newblock Opinions as incentives.
\newblock {\em Journal of Political Economy}, 117(5):815--860, 2009.

\bibitem{ciarlet1989introduction}
Philippe~G Ciarlet, Bernadette Miara, and Jean-Marie Thomas.
\newblock {\em Introduction to numerical linear algebra and optimisation}.
\newblock Cambridge University Press, 1989.

\bibitem{degond2017continuum}
Pierre Degond, Jian-Guo Liu, Sara Merino-Aceituno, and Thomas Tardiveau.
\newblock Continuum dynamics of the intention field under weakly cohesive
  social interaction.
\newblock {\em Mathematical Models and Methods in Applied Sciences},
  27(01):159--182, 2017.

\bibitem{degroot1974reaching}
Morris~H DeGroot.
\newblock Reaching a consensus.
\newblock {\em Journal of the American Statistical Association},
  69(345):118--121, 1974.

\bibitem{demarzo2003persuasion}
Peter~M DeMarzo, Dimitri Vayanos, and Jeffrey Zwiebel.
\newblock Persuasion bias, social influence, and unidimensional opinions.
\newblock {\em The Quarterly Journal of Economics}, 118(3):909--968, 2003.

\bibitem{dietrich2017control}
Florian Dietrich, Samuel Martin, and Marc Jungers.
\newblock Control via leadership of opinion dynamics with state and
  time-dependent interactions.
\newblock {\em IEEE Transactions on Automatic Control}, 63(4):1200--1207, 2017.

\bibitem{douven2020mis}
Igor Douven and Rainer Hegselmann.
\newblock Mis- and disinformation in a bounded confidence model.
\newblock {\em Artificial Intelligence}, page 103415, 2020.

\bibitem{ellison1993rules}
Glenn Ellison and Drew Fudenberg.
\newblock Rules of thumb for social learning.
\newblock {\em Journal of Political Economy}, 101(4):612--643, 1993.

\bibitem{ellison1995word}
Glenn Ellison and Drew Fudenberg.
\newblock Word-of-mouth communication and social learning.
\newblock {\em The Quarterly Journal of Economics}, 110(1):93--125, 1995.

\bibitem{etesami2018influence}
S~Rasoul Etesami, Sadegh Bolouki, Angelia Nedi{\'c}, Tamer Ba{\c{s}}ar, and
  H~Vincent Poor.
\newblock Influence of conformist and manipulative behaviors on public opinion.
\newblock {\em IEEE Transactions on Control of Network Systems}, 6(1):202--214,
  2018.

\bibitem{festinger1957theory}
Leon Festinger.
\newblock {\em A theory of cognitive dissonance}, volume~2.
\newblock Stanford University Press, 1957.

\bibitem{french1956formal}
John~RP French~Jr.
\newblock A formal theory of social power.
\newblock {\em Psychological Review}, 63(3):181, 1956.

\bibitem{golub2010naive}
Benjamin Golub and Matthew~O Jackson.
\newblock Naive learning in social networks and the wisdom of crowds.
\newblock {\em American Economic Journal: Microeconomics}, 2(1):112--49, 2010.

\bibitem{gomes2005dynamic}
Armando Gomes and Philippe Jehiel.
\newblock Dynamic processes of social and economic interactions: On the
  persistence of inefficiencies.
\newblock {\em Journal of Political Economy}, 113(3):626--667, 2005.

\bibitem{han2019opinion}
Wenchen Han, Changwei Huang, and Junzhong Yang.
\newblock Opinion clusters in a modified hegselmann--krause model with
  heterogeneous bounded confidences and stubbornness.
\newblock {\em Physica A: Statistical Mechanics and its Applications},
  531:121791, 2019.

\bibitem{harary1959status}
Frank Harary.
\newblock Status and contrastatus.
\newblock {\em Sociometry}, 22(1):23--43, 1959.

\bibitem{hegselmann2017thomas}
Rainer Hegselmann.
\newblock Thomas {C}. {S}chelling and {J}ames {M}. {S}akoda: {T}he
  intellectual, technical, and social history of a model.
\newblock {\em Journal of Artificial Societies and Social Simulation}, 20(3),
  2017.

\bibitem{hegselmann2009deliberative}
Rainer Hegselmann and Ulrich Krause.
\newblock Deliberative exchange, truth, and cognitive division of labour: A
  low-resolution modeling approach.
\newblock {\em Episteme}, 6(2):130--144, 2009.

\bibitem{hegselmann2015opinion}
Rainer Hegselmann and Ulrich Krause.
\newblock Opinion dynamics under the influence of radical groups, charismatic
  leaders, and other constant signals: A simple unifying model.
\newblock {\em Networks \& Heterogeneous Media}, 10(3):477, 2015.

\bibitem{hegselmann2002opinion}
Rainer Hegselmann, Ulrich Krause, et~al.
\newblock Opinion dynamics and bounded confidence models, analysis, and
  simulation.
\newblock {\em Journal of Artificial Societies and Social Simulation}, 5(3),
  2002.

\bibitem{hegselmann2006truth}
Rainer Hegselmann, Ulrich Krause, et~al.
\newblock Truth and cognitive division of labor: First steps towards a computer
  aided social epistemology.
\newblock {\em Journal of Artificial Societies and Social Simulation}, 9(3):10,
  2006.

\bibitem{jabin2014clustering}
Pierre-Emmanuel Jabin and Sebastien Motsch.
\newblock Clustering and asymptotic behavior in opinion formation.
\newblock {\em Journal of Differential Equations}, 257(11):4165--4187, 2014.

\bibitem{kirchkamp2007naive}
Oliver Kirchkamp and Rosemarie Nagel.
\newblock Naive learning and cooperation in network experiments.
\newblock {\em Games and Economic Behavior}, 58(2):269--292, 2007.

\bibitem{leitmann1974cooperative}
George Leitmann.
\newblock {\em Cooperative and non-cooperative many players differential
  games}.
\newblock Springer, 1974.

\bibitem{mossel2020social}
Elchanan Mossel, Manuel Mueller-Frank, Allan Sly, and Omer Tamuz.
\newblock Social learning equilibria.
\newblock {\em Econometrica}, 88(3):1235--1267, 2020.

\bibitem{mueller2014does}
Manuel Mueller-Frank.
\newblock Does one {B}ayesian make a difference?
\newblock {\em Journal of Economic Theory}, 154:423--452, 2014.

\bibitem{nickerson1998confirmation}
Raymond~S Nickerson.
\newblock Confirmation bias: A ubiquitous phenomenon in many guises.
\newblock {\em Review of General Psychology}, 2(2):175--220, 1998.

\bibitem{pareto1964cours}
Vilfredo Pareto.
\newblock {\em Cours d'{\'e}conomie politique, Tome Premier}.
\newblock F. Rouge, Lausanne, 1896.

\bibitem{rosenberg2009informational}
Dinah Rosenberg, Eilon Solan, and Nicolas Vieille.
\newblock Informational externalities and emergence of consensus.
\newblock {\em Games and Economic Behavior}, 66(2):979--994, 2009.

\bibitem{rusinowska2019opinion}
Agnieszka Rusinowska and Akylai Taalaibekova.
\newblock Opinion formation and targeting when persuaders have extreme and
  centrist opinions.
\newblock {\em Journal of Mathematical Economics}, 84:9--27, 2019.

\bibitem{teschl2012ordinary}
Gerald Teschl.
\newblock {\em Ordinary differential equations and dynamical systems}, volume
  140.
\newblock American Mathematical Society, 2012.

\bibitem{wongkaew2015control}
Suttida Wongkaew, Marco Caponigro, and Alfio Borzi.
\newblock On the control through leadership of the {H}egselmann--{K}rause
  opinion formation model.
\newblock {\em Mathematical Models and Methods in Applied Sciences},
  25(03):565--585, 2015.

\end{thebibliography}

\end{document}